\def\mathscr{\mathcal}
\theoremstyle{plain}
\newtheorem{theorem}{Theorem}[section]
\newtheorem{corollary}[theorem]{Corollary}
\newtheorem{lemma}[theorem]{Lemma}
\newtheorem{proposition}[theorem]{Proposition}
\newtheorem{quasi-theorem}[theorem]{Quasi-Theorem}
\newtheorem{conjecture}[theorem]{Conjecture}
\theoremstyle{definition}
\newtheorem{definition}[theorem]{Definition}
\newtheorem{question}[theorem]{Question}
\newtheorem{remark}[theorem]{Remark}
\newtheorem{hypothesis}[theorem]{Hypothesis}
\def\CC{\mathbb C}
\def\RR{\mathbb R}
\newcommand{\aff}{\mathbb{A}}
\newcommand{\rr}{\mathbb{R}}
\DeclareMathOperator{\Enc}{\mathbb{E}}
\DeclareMathOperator{\Sector}{\bf{S}}
\begin{document}

\title[Constructing buildings]{Constructing buildings and harmonic maps}
\author[Katzarkov \and Noll \and Pandit \and Simpson]{Ludmil Katzarkov \and Alexander Noll \and  Pranav Pandit \and Carlos Simpson}



\begin{abstract}
In a continuation of our previous work \cite{KNPS},
we outline a theory which should lead to the construction of a universal pre-building and versal
building with a $\phi$-harmonic map from a Riemann surface, in the case of two-dimensional buildings
for the group $SL_3$. This will provide a generalization of the space of leaves of the foliation defined by
a quadratic differential in the classical theory for $SL_2$.
Our conjectural construction would determine the exponents for $SL_3$ WKB problems, and it can be
put into practice on examples.
\end{abstract}

\maketitle

\section{Introduction}

Let $X$ be a Riemann surface, compact for now.
The moduli spaces of representations, vector bundles with connection, and
semistable Higgs bundles denoted $M_B$, $M_{DR}$ and $M_H$ respectively, are isomorphic as spaces, and we
denote the common underlying space just by $M$. This space
has three different algebraic structures, and the Betti and de Rham ones
share the same complex manifold.

These algebraic varieties are noncompact, indeed $M_B$ is affine, but they have compactifications.
First, $\overline{M}_{DR}$ and
$\overline{M}_H$ are natural orbifold compactifications, and when $M$ itself is smooth, the orbifold
compactifications are smooth.

The Betti moduli space, otherwise usually known as the character variety, admits many
natural compactifications.  Indeed the mapping class group acts on $M_B$ but it doesn't stabilize any one of them.
Gross, Hacking, Keel and Kontsevich \cite{GHKK} have recently studied more closely the problem of compactifying $M_B$ and show that
there are indeed some optimal choices with good properties.

Morgan-Shalen \cite{MorganShalen}
for $SL_2$, and recently Parreau \cite{Parreau}
for groups of higher rank, construct a compactification of $M$
where the points at $\infty$ are actions of the fundamental group on $\RR$-buildings. This compactification is related to the inverse limit of the algebraic compactifications of $M_B$, and we shall call it
$\overline{M}_B^P$.

Let $\overline{M}^{TSC}$ be the Tychonoff-Stone-\v{C}ech compactification.
It is universal, so it maps to the other ones.
The points at infinity may be identified with non-principal ultrafilters\footnote{Here, by an ultrafilter on a normal ($T_4$) topological space, we mean a maximal filter consisting of closed subsets, as were considered by Wallman \cite{Wallman}.}
$\omega$ on $M$.
We may therefore consider the maps
$$
\begin{array}{ccccc}
\overline{M}_{H} &\leftarrow &\overline{M}^{TSC}& \rightarrow& \overline{M}^P_B .\\
&  & \downarrow && \\
& & \overline{M}_{DR}  & &
\end{array}
$$
Given a non-principal ultrafilter $\omega$, consider its image $\omega _{DR}$
in $\overline{M}_{DR}$ (resp. $\omega _H$ in $\overline{M}_H$) and its image $\omega^P_B$ in $\overline{M}^P_B$.

Our basic question is to understand the relationship between $\omega_{DR}$ (resp. $\omega _H$) and
$\omega^P_B$.
In what follows we concentrate on $\omega _{DR}$ but there are also conjectures for $\omega _H$ as mentioned
in \cite{KNPS}, with recent results by Collier and Li \cite{CollierLi}.

The divisors at infinity in $\overline{M}_{DR}$ and $\overline{M}_H$ are both the same. They are
$$
D= M_H^{\ast} / \CC^{\ast}
$$
where $M_H^{\ast}:= M_H - {\rm Nilp}$ is the complement of the nilpotent cone. Recall that
${\rm Nilp}= f^{-1}(0)$ where $f: M_H \rightarrow \aff ^N$ is the Hitchin fibration.

Hence $\omega_{DR}$ (and similarly $\omega _H$) may be identified as an equivalence class of points in
$M_H^{\ast}$ modulo the action of $\CC^{\ast}$. We may therefore write
$$
\omega _{DR} = (E,\varphi )
$$
as a semistable Higgs bundle, such that the Higgs field $\varphi$ is not nilpotent, and this identification holds up to
nonzero complex scaling of $\varphi$.

It turns out that the essential features of the correspondence with $\overline{M}^P_B$ depend not on $\varphi$ up to
complex scaling, but up to real scaling. We therefore introduce the {\em real blowing up} $\widetilde{M}_{DR}$ of
$\overline{M}_{DR}$ along the divisor $D$. It is still a compactification of $M_{DR}$.
The boundary at infinity is $\widetilde{D}$ which is an $S^1$-bundle over $D$,
with
$$
\widetilde{D}= M_H^{\ast} / \RR^{+, \ast} .
$$
Let $\widetilde{\omega}_{DR}$ denote the image of $\omega$ in $\widetilde{D}\subset \widetilde{M}_{DR}$.
We may again write
$$
\widetilde{\omega} _{DR} = (E,\varphi )
$$
but this time $\varphi$ is defined up to positive real scaling.

We denote by $\phi$ the {\em spectrum} of $\varphi$. It consists of a multivalued tuple of holomorphic differential forms on
$X$. This is equivalent to saying that it is a point in the Hitchin base $\aff^N$, or again equivalently, a spectral curve $\Sigma \subset T^{\ast}X$. For us, it will be most useful to think of writing $\phi = (\phi _1,\ldots , \phi _r)$ locally over $X$, where $\phi _i$ are
holomorphic differential forms. There will generally be some singularities at points over which $\Sigma \rightarrow X$ is
ramified, and as we move around in the complement of these singularities the order of the $\phi _i$ may change.
Denote by $X^{\ast}$ the complement of the set of singularities.

Gaiotto-Moore-Neitzke \cite{GMN-SN} define the {\em spectral network} associated to $\phi$. This is one of the main players in our story.
We refer to \cite{GMN-SN, GMN-Snakes} for many pictures, and to our paper \cite{KNPS} for some specific pictures related to the BNR example.

These structures constitute our understanding of $\widetilde{\omega}_{DR}$.

On the other side, the point $\omega ^P_B$ may be identified with an action of $\pi _1(X)$ on an $\RR$-building
${\rm Cone}_{\omega}$. The theory of Gromov-Schoen and Korevaar-Schoen allows us to choose an equivariant
harmonic mapping $h: \widetilde{X}\rightarrow {\rm Cone}_{\omega}$. This is most often uniquely determined by
the action. The metric on the building, and hence the differential of the harmonic mapping, are well-defined
up to positive real scaling.

Here, we use the terminology ``harmonic map'' to an $\RR$-building to mean a map
such that the domain, minus a singular set of real codimension $2$,
admits an open covering where each open set maps into a single apartment, and these
local maps are harmonic mappings to Euclidean space. The differential of a harmonic map is the real part of a mutlivalued differential form.

In \cite{KNPS} we used the groupoid version of Parreau's theory \cite{Parreau}
to construct the
harmonic mapping $h$, and the classical local WKB approximation to show that its differential is $\phi$.

\begin{theorem}[\cite{KNPS}]
The differential of the harmonic mapping $h$ is the real part of the multivalued differential $\phi$ considered above:
$$
dh = \Re \phi .
$$
\end{theorem}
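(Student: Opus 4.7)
\medskip
\noindent\textbf{Proof proposal.}

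The plan is to exhibit $h$ as an ultralimit of classical harmonic maps to symmetric spaces, and then to read off its differential from the local WKB asymptotics of the associated family of flat connections. Along the ultrafilter $\omega$ we have a sequence of Higgs bundles whose scaling parameter $t$ tends to infinity; by nonabelian Hodge theory each produces a flat connection $\nabla_t$ on $X$ with a harmonic metric, hence an equivariant harmonic map $h_t \colon \widetilde{X} \to \SL_r(\CC)/\SU(r)$. Parreau's construction presents $\Cone_\omega$ as the asymptotic cone of these symmetric spaces with metrics rescaled by $1/t$; the ultralimit of the $h_t$ is the harmonic map $h$ furnished by Gromov--Schoen/Korevaar--Schoen. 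Thus $dh$ is the ultralimit of the rescaled differentials $t^{-1}\, dh_t$, and the theorem is equivalent to showing $t^{-1}\, dh_t \to \Re \phi$ in an $\omega$-a.e.\ sense on $X^\ast$.

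First I would work locally on $X^\ast$, where the spectral cover is unramified and $\varphi$ can be put in a holomorphic frame with approximately diagonal form $\mathrm{diag}(\phi_1,\ldots,\phi_r)$. In this frame the classical WKB approximation produces a formal fundamental solution of $\nabla_t s = 0$ of the shape $s_i(z) \sim a_i(z)\, \exp\!\big(t\!\int^z \phi_i\big)$, with the $a_i$ slowly varying and with exponentially small errors in a region bounded by Stokes rays (precisely the data of the Gaiotto--Moore--Neitzke spectral network). Transferring to the symmetric space, the Cartan projection of the holonomy of $\nabla_t$ along a short path from $x_0$ to $x$ is, up to lower order, the diagonal matrix with entries $t\,\Re\!\int_{x_0}^x \phi_i$. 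Consequently the symmetric-space distance $d\big(h_t(x_0), h_t(x)\big)$ is $t \cdot \|\Re \int_{x_0}^x \phi\| + o(t)$, and the Cartan-valued displacement of $h_t$ converges, after division by $t$, to the vector with entries $\Re\int_{x_0}^x \phi_i$ in an apartment of $\Cone_\omega$. Differentiating yields $dh = \Re \phi$ on each piece of the covering of $X^\ast$ by apartment-valued charts.

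The main obstacle is making these WKB estimates uniform and sharp enough to survive the ultralimit. One must (i) control the remainders in the WKB expansion with explicit bounds that depend on a fixed geometric structure on $X$ rather than on $t$, so that the $o(t)$ error is genuinely $\omega$-negligible; (ii) choose the local trivializations of $E$ and the Stokes sectors in a way that is compatible across nearby parameters $t$, so that the $h_t$ genuinely converge rather than oscillate; and (iii) handle the singular set where sheets of $\Sigma \to X$ collide or where the spectral network lines change the dominant eigenvalue, by restricting attention to a codimension-$2$ closed subset that is removable for harmonic maps and does not affect the computation of $dh$ on $X^\ast$.

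Finally, the global statement follows by patching: on overlapping WKB sectors, the ambiguity in the ordering of the $\phi_i$ is absorbed into the Weyl-group ambiguity of the apartment containing the image, which is exactly the multivalued nature of $\phi$ in the statement. This matches $dh$, which is well-defined only as a $W$-orbit of differential forms, with $\Re \phi$, and completes the identification.
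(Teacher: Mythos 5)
Your proposal follows essentially the same route as the paper: the theorem is imported from \cite{KNPS}, and the argument there is exactly the one you outline, namely realizing $h$ as an ultralimit of the rescaled harmonic maps $h_t$ to the symmetric space via Parreau's asymptotic-cone construction, and then using the classical local WKB approximation to identify the rescaled Cartan-valued (vector) distances, hence $dh$, with $\Re\phi$ away from the singular set. The caveats you list (uniformity of the WKB remainders, consistency of Stokes sectors, and the removable singular set) are precisely the technical points handled in \cite{KNPS}, so nothing essential is missing.
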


We note that Collier and Li have proven the corresponding result for the Hitchin WKB problem in some cases
\cite{CollierLi}.

This theorem suggests the following question.

\begin{question}
\label{mainq}
To what extent does $\phi$ determine $h$ and hence $\omega ^P_B$?
\end{question}

In what follows we shall assume that we have passed to the universal cover of the Riemann surface so $\widetilde{X}=X$.
Furthermore, since our considerations now, about how to integrate the differential $\phi$ into a harmonic map $h$,
concern mainly bounded regions of $X$, we may envision other examples.
Such might originally have come from noncompact Riemann surfaces and differential equations with irregular singularities.
The BNR example in \cite{KNPS} was of that form.

A {\em $\phi$-harmonic map} will mean a harmonic map $h$ from $X$ to a building $B$, such that its differential is $dh=\Re \phi$.

The goal of this paper is to sketch a theory which should lead to the answer to Question \ref{mainq},
at least for the group $SL_3$ and under certain genericity hypotheses about the absence of ``BPS states''.

\subsection{Contents}

We start by reviewing briefly the theory of harmonic maps to trees, viewed as
buildings for $SL_2$, with the universal map given by the leaf space of a foliation
defined by a quadratic differential. The remainder of the paper is devoted
to generalizing this classical theory
to higher rank buildings and in particular, as discussed in Section \ref{twodb},
to two-dimensional buildings for $SL_3$.

In order to go towards an algebraic viewpoint, we indicate in Section \ref{sec-presheaf}
how one can view a building as a presheaf on a certain Grothendieck site of {\em enclosures}.
This allows one to formulate a small-object argument completing a pre-building to a building.

In Section \ref{sec-initial} we describe the initial construction associated to a spectral
curve $\phi$. This is the building-like object, admitting a $\phi$-harmonic map from $X$,
obtained by glueing together small local pieces.

The initial construction will have points of positive curvature, and the main work is in
Sections \ref{sec-modifying} and \ref{sec-progress}
where we describe a sequence of modifications to the initial
construction designed to remove the positive curvature points. One of the main difficulties
is that a point where only four $60^{\circ}$ sectors meet, admits in principle two
different foldings. Information coming from the original harmonic map serves to determine
which of the two foldings should be used. In order to keep track of this information, we
introduce the notion of ``scaffolding''. The result of
Sections \ref{sec-modifying} and \ref{sec-progress} is a sequence of moves
leading to a sequence of two-manifold constructions. 

Our main conjecture is that when the spectral network \cite{GMN-SN} of $\phi$ doesn't have
any BPS states, this
sequence is well defined and converges locally to a two-manifold construction with only
nonpositive curvature. The universal pre-building is obtained by putting back some pieces that
were trimmed off during the procedure.

In Section \ref{BNRrev}, coming as an interlude between the two main sections,
we present an extended example. We show how to treat
the BNR example which we had already considered rather extensively in \cite{KNPS}, but from the
point of view of the general process being described here. It was
through consideration of this example that we arrived at our process, and we hope
that it will guide the reader to understanding how things work.

In Section \ref{sec-a2} we present just a few pictures showing what can happen in a typical slightly
more complicated example. This example and others will be considered more extensively elsewhere.

In Section \ref{sec-further} we consider some consequences of our still conjectural process, and
indicate various directions for further study.

\subsection{Convention} This paper is intended to sketch a picture rather than provide complete proofs. All stated theorems, propositions
and lemmas are actually ``quasi-theorems'': plausible and tractable statements for which we have in mind a potential method of proof.

Statements which should be considered as open
problems needing considerably more work to
prove, are labeled as conjectures.

\subsection{Acknowledgements}

The ideas presented here are part of our ongoing attempt to understand the geometric
picture relating points in the Hitchin base and stability conditions. So this work is initiated
and motivated by the work of Maxim Kontsevich and Yan Soibelman, as exemplified by their many lectures
and the discussions we have had with them. It is a great pleasure to dedicate this paper to Maxim on the
occasion of his $50^{\rm th}$ birthday.

The reader will note that many other mathematicians are also contributing to this fast-developing theory
and we would also like to thank them for all of their input. We hope to present here a small contribution of
our own.

On a somewhat more specific level, the idea of looking at harmonic maps to buildings, generalizing
the leaf space tree of a foliation, came up during some fairly wide discussions about this developing theory in which Fabian Haiden also participated and made important comments, so we would like to thank Fabian.

We would like to thank many other people including Brian Collier, Georgios Daskalopoulos, Mikhail Kapranov, Fran\c{c}ois Labourie,  Ian Le, Chikako Mese, Richard Schoen, Richard Wentworth, and the members of the
Geometric Langlands seminar at the University of Chicago, for interesting
discussions contributing to this paper. We thank Doron Puder for an interesting talk at the IAS on
the theory of Stallings graphs.

The authors would like to thank the University of Miami for hospitality and support during the completion of this work. The fourth named author would in addition like to thank the Fund For Mathematics at the Institute for Advanced Study in Princeton for support.  The first named author was supported by the Simons Foundation as a Simons Fellow.

The first, second, and third named authors were funded by an ERC grant, as well as the following grants: NSF DMS 0854977 FRG, NSF DMS 0600800, NSF DMS 0652633 FRG, NSF DMS 0854977, NSF DMS 0901330, FWF P 24572 N25, FWF P20778, FWF P 27784-N25. The fourth named author was supported in part by the ANR grant 933R03/13ANR002SRAR (Tofigrou). The first named author was also funded by the following grants: DMS-1265230 ``Wall Crossings in Geometry and Physics'', DMS-1201475 ``Spectra, Gaps, Degenerations and Cycles'', and OISE-1242272 PASI ``On Wall Crossings, Stability Hodge Structures \& TQFT''. The second named author was in addition funded by the Advanced Grant ``Arithmetic and physics of Higgs moduli spaces'' No. 320593 of the European Research Council.

We would all like to thank the IHES for hosting Maxim's birthday conference, where
we did a significant part of the work presented here.

\section{Trees and the leaf space}

Let us first consider the case of representations into $SL_2$. Exact WKB analysis and
its relationship with the geometry of quadratic differentials has been considered extensively
by Iwaki and Nakanishi \cite{IwakiNakanishi}.

In the Morgan-Shalen-Parreau theory, the limiting building ${\rm Cone}_{\omega}$ is then an $\RR$-tree.
In saying {\em $\RR$-tree} we include the data of a distance function which is the standard one on any apartment.
The choice of metric is well-defined up to
a global scalar on ${\rm Cone}_{\omega}$.

The spectral curve $\Sigma \rightarrow X$ is a $2$-sheeted covering defined by a quadratic differential $q\in H^0(X,K_X^{\otimes 2})$.
The multivalued differential form $\phi$ is just the set of two square roots: $\phi _i = \pm \sqrt{q}$. The singularities are
the zeros of $q$, and we shall usually assume that these are simple zeros. It corresponds to saying that $\Sigma$ is smooth
which, for a $2$-sheeted cover, implies having simple branch points.

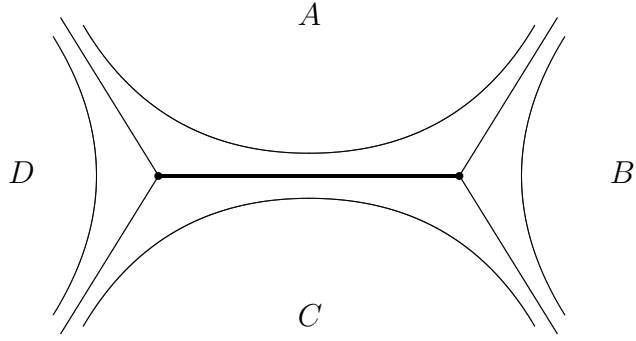
\begin{figure}
$$
\setlength{\unitlength}{.5mm}
\begin{picture}(200,120)

\put(60,60){\circle*{2}}
\put(140,60){\circle*{2}}

\linethickness{.5mm}
\qbezier(60,60)(100,60)(140,60)
\thinlines

\qbezier(60,60)(47,81)(34,102)
\qbezier(60,60)(47,39)(34,18)

\qbezier(32,97)(55,60)(32,23)

\qbezier(140,60)(153,81)(166,102)
\qbezier(140,60)(153,39)(166,18)

\qbezier(168,97)(145,60)(168,23)

\qbezier(40,100)(60,66)(100,66)
\qbezier(160,100)(140,66)(100,66)

\qbezier(40,20)(60,54)(100,54)
\qbezier(160,20)(140,54)(100,54)

\put(97,100){$A$}
\put(180,58){$B$}
\put(97,20){$C$}
\put(20,58){$D$}

\end{picture}
$$
\caption{A foliation with BPS state}
\label{foliation1}
\end{figure}

The differential form $\Re \phi$ is well defined up to a change of sign. Therefore, it defines a single real direction at every nonsingular
point of $X$,
which we call the {\em foliation direction}. These directions are the tangent directions to the leaves of a real foliation, which we
call the {\em foliation defined by $\phi$} or equivalently the {\em foliation defined by the quadratic differential $q$}.

Suppose $T$ is an $\RR$-tree and $h: \widetilde{X}\rightarrow T$ is a $\phi$-harmonic map. Then, the closed leaves of
the foliation defined by $\phi$ map to single points in $T$. This is clear from the differential condition at smooth points.
By continuity it also extends across the singularities: the closed leaves are defined to be the smallest closed subsets which are
invariant by the foliation. In pictures, a leaf entering a three-fold singular point therefore generates two branches of the
leaf going out in the other two directions, and all three of these branches have to map to the same point in $T$.

We assume that the {\em space of leaves of the foliation} is well-defined as an $\RR$-tree.
Denote it by $T _{\phi}$. The points of $T_{\phi}$ are by definition just the closed leaves of the foliation.

In order to consider the universal property, we say that a {\em folding map} $u:\RR \rightarrow T '$ is a map such that
there is a locally finite decomposition of $\RR$ into segments, such that on each segment $u$ is an isometric
embedding. A map $u:T \rightarrow T'$ between $\RR$-trees is a {\em folding map} if its restriction to each apartment
is a folding map. Note that an isometric embedding is a particularly nice kind of folding map which doesn't actually fold anything.

\begin{corollary}
The projection map $h_{\phi}: X\rightarrow T _{\phi}$ is a $\phi$-harmonic map, universal among $\phi$-harmonic
maps to $\RR$-trees. That says that if $h:X\rightarrow T$ is any other $\phi$-harmonic map, there exists a unique
folding map $T _{\phi}\rightarrow T$ making the diagram
$$
\begin{array}{ccc}
X & \rightarrow & T _{\phi} \\
& \searrow & \downarrow \\
& & T
\end{array}
$$
commute.
\end{corollary}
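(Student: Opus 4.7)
The plan is to establish the two claims separately: first that $h_{\phi}$ is itself $\phi$-harmonic, and then that the factorization property holds. Throughout, I will work away from the zeros of $q$ using the local coordinate $z$ in which $\phi = dz$, so that the foliation is the horizontal foliation $dy = 0$, and leaves are horizontal segments; the transverse measure $|dy| = |\Re(i\phi)|$ gives the natural metric on the leaf space. At a simple zero of $q$, the local model is the standard trivalent singularity where three half-leaves meet, and continuity forces all three half-leaves emanating from the singularity to represent the same point of $T_{\phi}$.

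To show $h_{\phi}$ is $\phi$-harmonic, I would work in these local coordinates. By construction $h_{\phi}$ collapses horizontal directions, so the differential vanishes on the foliation direction, which is exactly the kernel of $\Re \phi$. On the transverse direction, $h_{\phi}$ is, by definition of the metric on $T_{\phi}$, the isometric projection $z \mapsto y$, so that $dh_{\phi} = dy = \Re(-i\phi)$. After correcting the sign convention (or, more invariantly, observing that $\Re \phi$ is only defined up to sign because $\phi$ is the pair $\pm \sqrt{q}$), this is exactly the condition $dh_{\phi} = \Re \phi$. Harmonicity follows because on any local apartment of $T_{\phi}$ the map reduces to an $\RR$-linear projection from an open subset of $\CC$ to $\RR$, which is manifestly harmonic; across the trivalent singular points, harmonicity extends by the removable singularity property for harmonic maps into $\RR$-buildings since the singular set is of real codimension two and $h_{\phi}$ is Lipschitz.

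For the universal property, let $h : X \to T$ be an arbitrary $\phi$-harmonic map. The key observation, already made in the text, is that every closed leaf of the foliation maps to a single point of $T$: on a smooth leaf this follows from $dh = \Re \phi$ vanishing in the foliation direction, and at singularities from continuity together with the characterization of closed leaves as minimal foliation-invariant closed sets. Consequently $h$ descends uniquely as a set map $u : T_{\phi} \to T$ with $h = u \circ h_{\phi}$, and $u$ is uniquely determined because $h_{\phi}$ is surjective. To see that $u$ is a folding map, I restrict attention to an apartment $A \subset T_{\phi}$, which corresponds to a maximal transverse arc $\gamma \subset X$ parametrized by arclength for the transverse measure. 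Along $\gamma$, $h$ is a map from an interval into the tree $T$ whose derivative has absolute value $1$ almost everywhere (since $dh = \Re \phi$). Any such $1$-Lipschitz map into an $\RR$-tree that is a local isometry off a closed subset decomposes, by the structure of geodesics in trees, into a locally finite concatenation of isometric segments reflected at discrete fold points; this is exactly the definition of a folding map. Gluing these folding descriptions across apartments of $T_{\phi}$ produces the required folding map $u : T_{\phi} \to T$.

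The main obstacle in making this rigorous is the local-finiteness condition in the definition of folding map: one must exclude the possibility that the fold points of the restriction $u|_\gamma$ accumulate. I would handle this by using the fact that a harmonic map from a Riemann surface to an $\RR$-tree, in the sense used here, is locally modeled on harmonic functions to Euclidean space outside a codimension-two singular set; this regularity, combined with the Gromov--Schoen--Korevaar--Schoen theory of the singular set, ensures that fold points of $u|_\gamma$ occur only at the images of the zeros of $q$ meeting $\gamma$, which form a discrete set. Once that is in place, the verification reduces to elementary tree geometry.
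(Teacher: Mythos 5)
Your overall strategy coincides with the one the paper leaves implicit: the corollary is stated there without proof, as an immediate consequence of the preceding observation that any $\phi$-harmonic map $h:X\to T$ collapses each closed leaf to a single point, so that $h$ factors set-theoretically through the leaf space $T_\phi$, uniquely because $h_\phi$ is surjective. Your verification that $h_\phi$ is itself $\phi$-harmonic, and your reduction of the folding property to the restriction of $u$ to apartments, flesh out what the authors intend (recall their convention that all such statements are ``quasi-theorems'').

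The one step that does not hold as written is the claim that ``fold points of $u|_\gamma$ occur only at the images of the zeros of $q$ meeting $\gamma$.'' Folds of $u$ occur at images of \emph{critical leaves}, not only at zeros of $q$ lying on the transverse arc itself: in the paper's own BPS example (Figures \ref{foliation1}--\ref{newtree}), $u$ folds the edges $B$ and $D$ together at the central vertex of $T_\phi$, which is the image of the singular leaf joining the two zeros, while a transverse arc running from region $B$ to region $D$ contains no zero of $q$. Consequently the discreteness you invoke is not automatic: the set of critical leaves met by a transverse arc --- equivalently, the set of branch points of $T_\phi$ lying on a given apartment --- can be dense (for instance when the foliation is minimal), so local finiteness of the fold set requires a separate argument. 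What harmonicity does give you is that $u$ cannot fold at a point of $T_\phi$ whose image in $T$ is an edge point, since there $h$ would locally be of the form $|y-y_0|$, which is not harmonic; folds are therefore confined to branch points of $T_\phi$ mapping to branch points of $T$. Upgrading that to the locally finite decomposition demanded by the definition of a folding map is exactly the point that still needs to be supplied, and the paper itself offers no argument for it.
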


In general, we have to admit the possibility that $T_{\phi} \rightarrow T$ be a folding map rather than an isometric
embedding. Let us look at an example which illustrates this. Suppose that our quadratic differential or equivalently $\phi$, has
two singular points which are on the same leaf of the foliation. See Figure \ref{foliation1}.

The leaf segment between the two singularities is emphasized, it is
called a {\em BPS-state} \cite{GMN-SN}. These play a key
role in the wallcrossing story. Here, it is the existence of the BPS state which leads to nonrigidity of the harmonic map.

The space of leaves of the foliation is a tree $T_{\phi}$ with four segments. In Figure \ref{tree1}
these segments are labeled in the same way as the
corresponding regions in the previous picture of the foliation.

\begin{figure}
$$
\setlength{\unitlength}{.5mm}
\begin{picture}(200,120)

\put(100,60){\circle*{2}}

\put(100,60){\line(0,1){30}}
\put(100,60){\line(1,0){70}}
\put(100,60){\line(0,-1){30}}
\put(100,60){\line(-1,0){70}}

\put(97,105){$A$}
\put(180,58){$B$}
\put(97,15){$C$}
\put(20,58){$D$}

\end{picture}
$$
\caption{The quotient tree}
\label{tree1}
\end{figure}
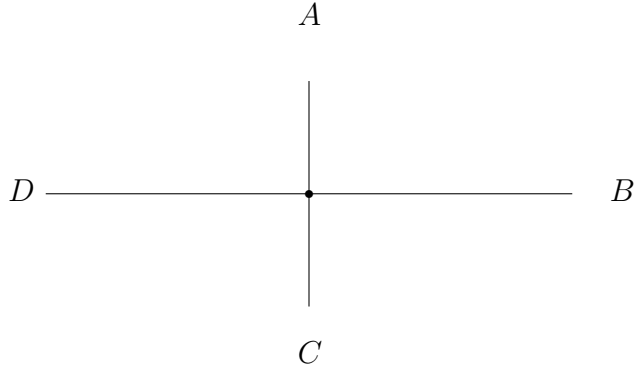

Suppose we have a $\phi$-harmonic map $h$ to another tree $T$. The universal property
gives a factorizing map $u:T_{\phi}\rightarrow T$.

The map $h$ is supposed to send small open sets in $X^{\ast}$ to single
apartments in $T$. The small open sets in $X$ provide some small segments in the tree $T_{\phi}$, and the
segments in $T_{\phi}$ which are images of these neighborhoods, have to map by isometric embeddings
(i.e. not be folded) under $u$.

Little neighborhoods
along the segments joining the regions for example $A$ to $B$, $B$ to $C$, $C$ to $D$ and $D$ to $A$,
as well as a neighborhood along the BPS state,
are shown in Figure \ref{littlenbds}.

\begin{figure}
$$
\setlength{\unitlength}{.5mm}
\begin{picture}(200,120)

\put(60,60){\circle*{2}}
\put(140,60){\circle*{2}}

\linethickness{.5mm}
\qbezier(60,60)(100,60)(140,60)
\thinlines

\qbezier(60,60)(47,81)(34,102)
\qbezier(60,60)(47,39)(34,18)

\qbezier(32,97)(55,60)(32,23)

\qbezier(140,60)(153,81)(166,102)
\qbezier(140,60)(153,39)(166,18)

\qbezier(168,97)(145,60)(168,23)

\qbezier(40,100)(60,66)(100,66)
\qbezier(160,100)(140,66)(100,66)

\qbezier(40,20)(60,54)(100,54)
\qbezier(160,20)(140,54)(100,54)

\put(97,100){$A$}
\put(180,58){$B$}
\put(97,20){$C$}
\put(20,58){$D$}

\put(153,81){\circle{18}}
\put(153,39){\circle{18}}
\put(47,81){\circle{18}}
\put(47,39){\circle{18}}

\put(100,60){\circle{18}}

\end{picture}
$$
\caption{Some little neighborhoods}
\label{littlenbds}
\end{figure}
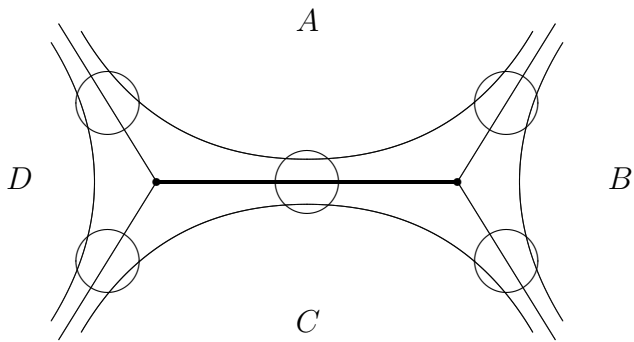

The four outer neighborhoods project to four corner segments as shown in Figure \ref{cornersegs}.
\begin{figure}
$$
\setlength{\unitlength}{.5mm}
\begin{picture}(200,120)

\put(100,60){\circle*{2}}

\put(100,60){\line(0,1){30}}
\put(100,60){\line(1,0){70}}
\put(100,60){\line(0,-1){30}}
\put(100,60){\line(-1,0){70}}

\put(97,105){$A$}
\put(180,58){$B$}
\put(97,15){$C$}
\put(20,58){$D$}

\qbezier(102,75)(102,62)(115,62)
\qbezier(102,45)(102,58)(115,58)
\qbezier(98,75)(98,62)(85,62)
\qbezier(98,45)(98,58)(85,58)

\end{picture}
$$
\caption{Non-folded segments}
\label{cornersegs}
\end{figure}
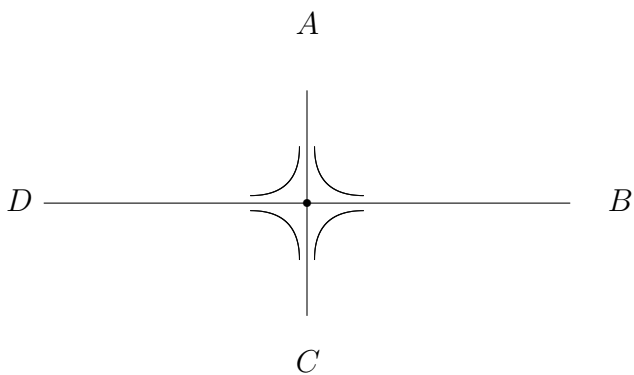
Therefore $u$ cannot fold these segments.

The little neighborhood along the central leaf projects to the segment shown in Figure \ref{cseg} joining the upper edge $A$ of the tree and the lower edge $C$.
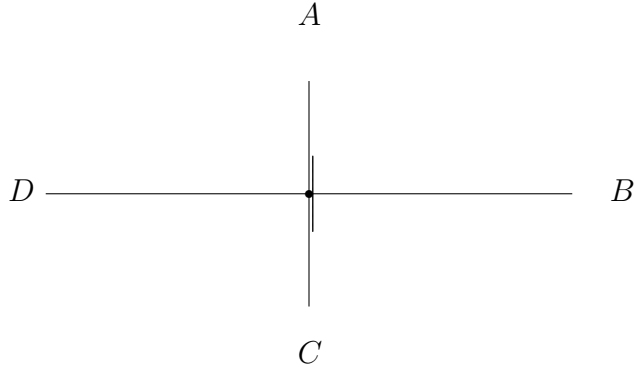
\begin{figure}
$$
\setlength{\unitlength}{.5mm}
\begin{picture}(200,120)

\put(100,60){\circle*{2}}

\put(100,60){\line(0,1){30}}
\put(100,60){\line(1,0){70}}
\put(100,60){\line(0,-1){30}}
\put(100,60){\line(-1,0){70}}

\put(97,105){$A$}
\put(180,58){$B$}
\put(97,15){$C$}
\put(20,58){$D$}

\qbezier(101,70)(101,60)(101,50)

\end{picture}
$$
\caption{Non-folded segment from central neighborhood}
\label{cseg}
\end{figure}
It follows that $u$ is not allowed to fold the two segments $A$ and $C$ together.

However, $u$ could very well fold the segments $B$ and $D$ together since they are not constrained by
any neighborhoods in $X$. Therefore, we can have a
harmonic\footnote{R. Wentworth pointed out to us that our composed map
to the tree in Figure \ref{newtree}, while locally harmonic, will not however
be globally energy-minimizing.}
map to the tree $T$ shown in Figure \ref{newtree}.
\begin{figure}
$$
\setlength{\unitlength}{.5mm}
\begin{picture}(200,120)

\put(100,60){\circle*{2}}

\put(100,60){\line(0,1){30}}
\put(85,45){\line(1,0){55}}
\put(100,60){\line(0,-1){14}}
\put(100,30){\line(0,1){14}}
\put(85,45){\line(-1,0){55}}

\put(100,60){\line(-1,-1){15}}

\put(97,105){$A'$}
\put(151,43){$B'$}
\put(97,15){$C'$}
\put(18,43){$D'$}

\end{picture}
$$
\caption{A folded tree}
\label{newtree}
\end{figure}
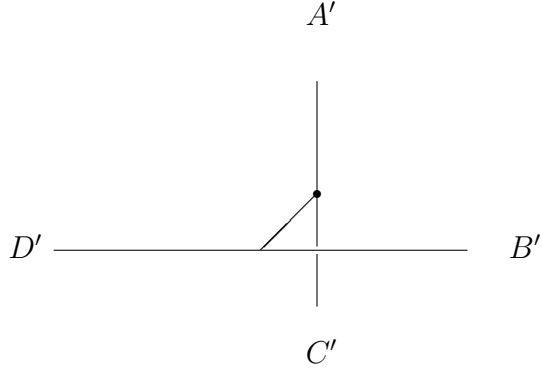
The central dot goes to the central dot, the segments $A$ and $C$ go to $A'$ and $C'$, and the
segments $B$ and $D$ go to the paths from the central dot out to $B'$ and $D'$, which are folded together along
some short segment.

We see in this example that the projection to the leaf space of the foliation is not rigid, and this non-rigidity looks closely
related to the presence of the BPS state.

On the contrary,
if there are no BPS states, then $h_{\phi}$ is rigid:

\begin{theorem}
\label{treecase}
Suppose that the spectral curve is smooth, i.e. the quadratic differential has simple zeros.
If there are no BPS states, then any $\phi$-harmonic map $h:X\rightarrow T$ factors as $h=u\circ h_{\phi}$
through a unique map
$u:T_{\phi}\rightarrow T$ which is an isometric embedding. Thus, $T$ is just $T_{\phi}$ plus some other edges
not touched by the image of $h$.
\end{theorem}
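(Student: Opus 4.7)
My plan is to apply the universal property of $h_{\phi}$ from the preceding corollary to extract the folding map $u:T_{\phi}\to T$, and then to rule out folding by producing, for each candidate ``bend'' in $T_{\phi}$, a short transverse arc in $X$ whose image must cross that bend isometrically. The corollary immediately gives a unique folding map $u$ with $h=u\circ h_{\phi}$; what remains is to show that $u$ does not fold at any point of $T_{\phi}$.

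For a regular point $t\in T_{\phi}$, corresponding to a non-singular closed leaf $\gamma$, I would pick a short arc $\alpha\subset X^{\ast}$ transverse to $\gamma$. Then $h_{\phi}\circ\alpha$ is a monotone parametrization of a small segment in the apartment through $t$; and because $h$ maps a neighborhood of $\alpha$ into a single apartment of $T$, the composition $h\circ\alpha=u\circ h_{\phi}\circ\alpha$ is monotone there as well. Hence $u$ cannot fold at $t$.

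The crux is the vertex case. Each vertex $v\in T_{\phi}$ arises from the singular leaf at a simple zero $p$ of $q$, and under the no-BPS hypothesis this leaf is just $p$ together with its three $120^{\circ}$ prongs $\rho_{12},\rho_{23},\rho_{31}$; in particular $v$ is a tripod with three edges $e_{1},e_{2},e_{3}$ labeled by the sectors $\sigma_{1},\sigma_{2},\sigma_{3}$ at $p$. Arguing by contradiction, suppose $u$ folds the apartment $e_{i}\cup e_{j}$ at $v$. I would pick an interior point $r\in\rho_{ij}$ --- here is where no-BPS is used, since each prong has an interior point in $X^{\ast}$ rather than ending at another zero --- and a short arc $\alpha\subset X^{\ast}$ transverse to $\rho_{ij}$ at $r$, crossing from $\sigma_{i}$ to $\sigma_{j}$. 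Then $h_{\phi}\circ\alpha$ is a monotone parametrization of a segment in $e_{i}\cup e_{j}$ that crosses $v$ at the parameter value $t_{0}$ with $\alpha(t_{0})=r$; if $u$ folded this apartment, the composition $h\circ\alpha$ would reverse direction at $t_{0}$, so $d(h\circ\alpha)/dt$ would be discontinuous there. But since $\alpha\subset X^{\ast}$, the $\phi$-harmonicity of $h$ forces $d(h\circ\alpha)/dt=\Re\phi(\alpha'(t))$, which is continuous near $t_{0}$ and nonzero at $t_{0}$ (the arc is transverse to the foliation direction on which $\Re\phi$ vanishes), contradicting the jump. Running this argument for each of the three pairs $(i,j)$ rules out folding at $v$.

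Combining the regular and vertex cases, $u$ is an isometric embedding, and then $T$ decomposes as $u(T_{\phi})=h(X)$ together with any edges of $T$ not touched by $h(X)$. I expect the main obstacle to be the vertex step, and within it the key point is the availability of a transverse arc lying entirely in $X^{\ast}$: this is precisely what the absence of BPS states provides, by guaranteeing regular interior points on each prong $\rho_{ij}$.
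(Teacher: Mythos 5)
Your proposal is correct in substance but organizes the argument differently from the paper. The paper's proof is global and metric: it asserts that in the absence of BPS states any two nonsingular points of $X$ are joined by a strictly noncritical path (everywhere transverse to the foliation); the $\Re\phi$-length of such a path computes the distance in $T_{\phi}$, and since its image in $T$ must again be noncritical (unfolded), the same length computes the distance in $T$, so $u$ preserves distances. You instead argue locally, ruling out a fold of $u$ at each point of $T_{\phi}$ by exhibiting a short transverse arc in $X^{\ast}$ whose image crosses that point without backtracking; the no-BPS hypothesis enters for you through the structure of the branch points (each singular closed leaf contains a single simple zero, so each vertex of $T_{\phi}$ is a tripod and the three adjacency constraints obtained by crossing the three prongs exhaust all pairs of edges at $v$), whereas for the paper it enters through the existence of global noncritical paths between arbitrary pairs of points. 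Both arguments rest on the same underlying mechanism, the one illustrated in Figures \ref{littlenbds}--\ref{cseg}; your local version is arguably more elementary, at the cost of needing the standard fact that a nowhere-folding map of $\RR$-trees is a local geodesic embedding and hence a global isometric embedding, which you should invoke explicitly when you pass from ``no fold at any point'' to ``isometric embedding.''

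Two small corrections. First, the parenthetical where you locate the use of the no-BPS hypothesis (``each prong has an interior point in $X^{\ast}$ rather than ending at another zero'') is not the right justification: a saddle connection also has regular interior points, and crossing it transversally still forbids folding the two adjacent edges --- this is exactly why $A$ and $C$ cannot be folded together in the example of Figure \ref{newtree}. What the hypothesis actually buys is what you state in the preceding sentence: the closed singular leaf contains only one zero, so $v$ is trivalent and there is no unconstrained pair of edges (such as $B$ and $D$ in that example) left over after the three prong-crossings. Second, when you compute $d(h\circ\alpha)/dt$ near $t_{0}$ you should choose $r$ away from the codimension-two singular set of $h$, which is possible since that set is zero-dimensional in the surface and so misses most interior points of the prong.
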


In the situation of the theorem, any two nonsingular points of $X$ are joined by a strictly noncritical path, that is to say
a path transverse to the foliation. The distance between the two points in $T_{\phi}$ is the length of the path using the
transverse measure defined by $\Re \phi$. This noncritical path also has to go to a noncritical path in $T$, so the
distance in $T$ is the same as in $T_{\phi}$.

If there is a BPS state, then some distances might not be well-determined. For example
the distance between points in regions $B$ an $D$ may change with the family
of maps pictured in Figure \ref{newtree}.

Non-uniqueness of the distance is something that has to be expected from the point of view of the
Voros resurgent expression for the transport function \cite{Voros}. The resurgent expression
may be viewed, roughly speaking, as a combination of contributions of different
exponential orders $e^{\lambda _it}$. See \cite{asymptotics} for a more precise
discussion of how this works.
When there is a BPS state,
then two exponents have the same real part, $\Re \lambda _i =\Re \lambda _j$. If
these are the leading terms, then we obtain a function which becomes oscillatory along
the positive real direction in $t$. A simple example would be $2\cos (t) = e^{it} + e^{-it}$.
If the transport function $T_{PQ}(t)$ is oscillatory, then the asymptotic exponent
$$
\nu_{PQ}^{\omega} = \lim _{\omega} \frac{1}{t} \log \| T_{PQ}(t) \|
$$
could very well depend on the choice of ultrafilter $\omega$ used to define the limit.
Thus, when there is a BPS state we can expect that the distance function defined
by the harmonic map could depend on
the choice of ultrafilter. In particular, it wouldn't be uniquely determined by the
spectral differential $\phi$.

These considerations, fitting perfectly well with the
illustration of Figure \ref{newtree}, motivate the hypothesis about BPS states in
Theorem \ref{treecase}.

\section{Two-dimensional buildings}
\label{twodb}

We feel that there should be a similar picture
for higher-dimensional buildings. The basic philosophy and motivations were described in
\cite{KNPS}.

Our idea at the current stage of this project is to concentrate on mappings to $2$-dimensional buildings.
These buildings are asymptotic cones for symmetric spaces $SL_3/SU(3)$, and the
mappings are limiting points in the sense of Parreau's theory \cite{Parreau} \cite{KNPS}.
The spectral curves for such mappings are triple coverings $\Sigma \rightarrow X$.

For this $SL_3$ case, our goal is to sketch the outlines of a theory which should lead to
a generalization of Theorem \ref{treecase}. It will say that
for generic $\phi$ in a chamber where there are no BPS states,
there should be a versal map to a building, such that the resulting distance
function is uniquely determined by $\phi$ and preserved under $\phi$-harmonic maps.

The $SL_3$
situation is in many ways an intermediate case. In the $SL_2$ case, the mapping $h_{\phi}$ to the
tree $T _{\phi}$ was {\em surjective}. For $SL_r$ with $r\geq 4$, the corresponding buildings have dimension $\geq 3$, so
the map from $X$ has no hope of being surjective. We will present some speculations about that higher rank situation at the end of the paper.

In the $SL_3$ case, the dimension of the building is two, which is the same as the dimension of $X$. Therefore, we can expect that
$X$ will surject onto a subset which at least has nonempty interior. So, it presents some similarity with the case of trees, and
this simplifies the geometrical aspects. We are able to develop a fairly precise although still conjectural picture.

The image of $X$ will be a quotient of $X$, glueing together
points over certain portions. This was seen in our BNR example of \cite{KNPS} which shall be recalled
in detail in Section \ref{BNRrev} below. Our goal is to
construct a map
$$
h_{\phi}: X\rightarrow B _{\phi}
$$
which should play the same role as the projection to the tree of leaves in the $SL_2$ case.

The construction has several steps. The main part will be the construction of a map to a {\em pre-building}
$$
h_{\phi}^{\rm pre}: X\rightarrow B _{\phi}^{\rm pre}
$$
such that the building $B _{\phi}$ can then be obtained from $B _{\phi}^{\rm pre}$ by adding on sectors not touched
by the image of $X$. Already in the BNR example of \cite{KNPS} there were infinitely many additional sectors to be added here.
They seem to be somewhat less related to the geometry of the situation.

The pre-building will itself be a quotient of an {\em initial construction}. The initial construction is obtained by glueing together
small pieces. Recall that one of the main characteristics of a building is its nonpositive curvature property.
The initial construction will, however, have some positively curved points: those are points where the total surrounding
angle is $240^{\circ}$ rather than $360^{\circ}$. As we shall see below, it leads to a process of successive pasting together
of parts of the construction. We conjecture that after a locally finite number of steps this process should stop and  give a
pre-building.

\section{Constructions as presheaves on enclosures}
\label{sec-presheaf}

In order to get started,
we need a precise way to manipulate the building-like objects involved in the construction. The idea for passing from
$B _{\phi}^{\rm pre}$ to $B _{\phi}$ will be to apply the small object argument. Also, the construction of
$B _{\phi}^{\rm pre}$ itself will involve successively imposing a bigger and bigger relation on the initial construction. So, it appears that we are working with algebraic rather than topological or metric objects.  This makes it desirable to have an algebraic framework.

We propose
to consider a {\em Grothendieck site} $\Enc$ of the basic building blocks, called ``enclosures''. Then ``constructions'' will be sheaves
of sets on the site $\Enc$, satisfying basic local presentability and separability properties.

Intuitively, a construction is a space obtained by glueing together the basic pieces such as shown in
Figure \ref{someencs}.

Proofs are not yet given, however we hope that they will be reasonably straightforward.
The general theory is described for buildings of any dimension.

Let $A$ be the affine space on which our buildings will be modeled.
A {\em root half-space} is a half-space bounded by a root hyperplane. 
An {\em enclosure} is a bounded closed subset defined by the intersection of finitely many root half-spaces.
For buildings corresponding to $SL_3$, the affine space 
is\footnote{More precisely $A$ is the space of triples $(x_1,x_2,x_3)$ with $\sum x_i=0$ and the root
half-spaces are defined by $x_i-x_j\geq c$.}  
$A=\RR^2$ and
some examples of enclosures are shown in Figure \ref{someencs}.
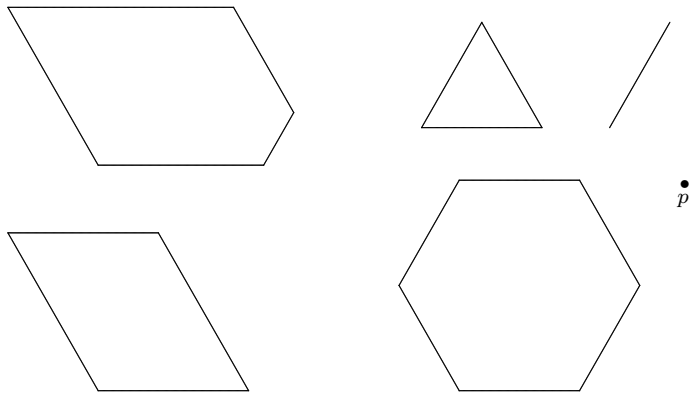
\begin{figure}
$$
\setlength{\unitlength}{.5mm}
\begin{picture}(200,120)

\qbezier(0,112)(12,91)(24,70)
\qbezier(0,112)(30,112)(60,112)
\qbezier(24,70)(46,70)(68,70)
\qbezier(60,112)(68,98)(76,84)
\qbezier(68,70)(72,77)(76,84)

\qbezier(110,80)(126,80)(142,80)
\qbezier(110,80)(118,94)(126,108)
\qbezier(142,80)(134,94)(126,108)

\qbezier(160,80)(168,94)(176,108)

\put(180,65){\circle*{2}}
\put(178,60){${\scriptstyle p}$}

\qbezier(0,52)(12,31)(24,10)
\qbezier(0,52)(20,52)(40,52)
\qbezier(40,52)(52,31)(64,10)
\qbezier(24,10)(44,10)(64,10)

\qbezier(120,10)(136,10)(152,10)
\qbezier(152,10)(160,24)(168,38)
\qbezier(120,10)(112,24)(104,38)

\qbezier(120,66)(136,66)(152,66)
\qbezier(152,66)(160,52)(168,38)
\qbezier(120,66)(112,52)(104,38)

\end{picture}
$$
\caption{Some enclosures}
\label{someencs}
\end{figure}

An {\em affine map} of enclosures $E\rightarrow E'$ is a map which is the restriction of an
automorphism of $A$ given by
an affine Weyl group element. Let $\Enc$ be the category of enclosures and affine maps between them.
There is an object ``point'' denoted $p$ consisting of a single point in $A$.

We define a Grothendieck topology on $\Enc$ as follows: a covering of $E$ is a finite collection of affine maps of enclosures
$E_i\rightarrow E$ such that $E$ is the union of their images.

The category $\Enc$ admits fiber products, but not products and in particular there is no terminal object.

\begin{proposition}
The coverings define a Grothendieck topology on $\Enc$.
\end{proposition}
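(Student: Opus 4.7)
The plan is to verify the three standard axioms for a Grothendieck pretopology: that identities are coverings, that coverings are stable under pullback along arbitrary morphisms in $\Enc$, and that coverings compose.

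The first axiom is immediate: for any enclosure $E$, the singleton family $\{\id_E : E \to E\}$ is by definition a covering since the image exhausts $E$. For the third (transitivity) axiom, suppose $\{f_i : E_i \to E\}_{i \in I}$ is a covering and, for each $i$, $\{g_{ij} : E_{ij} \to E_i\}_{j \in J_i}$ is a covering. The composites $f_i \circ g_{ij}$ are restrictions of products of affine Weyl elements, hence are themselves affine maps of enclosures, and
\[
\bigcup_{i,j} (f_i \circ g_{ij})(E_{ij}) \;=\; \bigcup_i f_i\!\left(\bigcup_j g_{ij}(E_{ij})\right) \;=\; \bigcup_i f_i(E_i) \;=\; E,
\]
so $\{E_{ij}\to E\}$ is a covering.

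The real work is in base change. First I would give an explicit model for fiber products in $\Enc$: given affine maps $f_i : E_i \to E$ and $f : E' \to E$, each comes from an element of the affine Weyl group acting as an affine bijection of the ambient space $A$, so its restriction to an enclosure is an affine isomorphism onto its image, and $f_i(E_i), f(E') \subset E$ are again enclosures (finite intersections of root half-spaces remain such under an affine Weyl transformation). Therefore the set-theoretic fiber product is naturally identified with the intersection $f_i(E_i) \cap f(E') \subset E$, which, being cut out in $A$ by finitely many root half-spaces, is once more an enclosure and receives canonical affine maps to $E_i$ and $E'$. A short diagram chase verifies the universal property, so this is the fiber product $E_i \times_E E'$ in $\Enc$.

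Granted this description, pullback stability is a direct set-theoretic computation: since $f$ restricts to an affine isomorphism onto $f(E')\subset E$, we have
\[
E' \;=\; f^{-1}(E) \;=\; f^{-1}\!\left(\bigcup_i f_i(E_i)\right) \;=\; \bigcup_i f^{-1}\bigl(f_i(E_i)\cap f(E')\bigr),
\]
and the right-hand side is exactly the union of the images of the projections $E_i \times_E E' \to E'$. Hence $\{E_i\times_E E' \to E'\}$ covers $E'$. I expect the main (though still mild) obstacle to be the verification that the naive set-theoretic intersection really furnishes the categorical fiber product in $\Enc$ — one has to check that an affine map from a third enclosure factoring both projections arises from a genuine affine Weyl element and not just an abstract set map — but this reduces to the fact that the affine structure on each $E_i$ is inherited from $A$, so the factoring map is determined by its underlying point map and automatically affine.
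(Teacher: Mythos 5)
Your argument is correct. The paper does not actually supply a proof of this proposition (the authors state explicitly that proofs in this section are not yet given), and your verification of the pretopology axioms --- identities, composition, and base change via the explicit identification of $E_i\times_E E'$ with $f_i(E_i)\cap f(E')$ transported back along the affine isomorphism $f$ --- is the straightforward argument they evidently have in mind; the facts you rely on, namely that affine Weyl elements preserve the class of root half-spaces so that images and intersections of enclosures are again enclosures, and that a factoring map is automatically the restriction of a Weyl element because it is a composite of such restrictions, are exactly what makes it work. The only point worth flagging is the degenerate case where $f_i(E_i)\cap f(E')$ is empty: one must either admit the empty enclosure as an object of $\Enc$ (the paper appears to allow this) or simply discard the empty members of the pulled-back family, which does not affect the covering condition.
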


If $E$ is an enclosure, we denote by $\widetilde{E}$ the sheaf associated to the presheaf represented by
$E$. It is different from the presheaf: if $E'$ is another enclosure, then the sections of $\widetilde{E}$ over $E'$,
which is to say the maps $E'\rightarrow \widetilde{E}$ or equivalently the maps $\widetilde{E'}\rightarrow \widetilde{E}$, are
the {\em folding maps} from $E'$ to $E$. These are the continuous maps which are piecewise affine for a decomposition of $E'$
into finitely many pieces which are themselves enclosures.

We can give a normalized form for coverings.
Suppose $H_1,\ldots , H_k$ is a sequence of parallel Weyl hyperplanes in order. Then we obtain a sequence of {\em strips}
$S_0,\ldots , S_{2k}$ covering $A$. The strip $S_{2i}$ is the closed subset consisting of
everything between and including $H_i$ and $H_{i+1}$, with $S_0$ and $S_{2k}$ being
the two outer half-planes (we assume $k\geq 2$ so there is no question about the ordering of these).
The strip $S_{2i+1}$ is just $H_{i-1}$ itself.
Suppose we are given a collection of such sequences of strips $S_{\cdot}^1,\ldots , S_{\cdot}^a$ for various directions of
the Weyl hyperplanes. Then for $J=(j_1,\ldots , j_a)$ with $0\leq j_i\leq 2k_i$ we may consider the enclosure
$$
U_J:=E\cap  S^1_{j_1}\cap \cdots \cap S^a_{j_a}.
$$
These cover  $E$.

\begin{lemma}
Suppose $\{ V_k\}$ is a covering of $E$. Then it may be refined to a
{\em standard covering}, that is to say a covering of the form $\{ U_J\} $
constructed above.
\end{lemma}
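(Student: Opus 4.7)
The plan is to extract from the data of the covering a finite arrangement of root hyperplanes, and to use the associated strip decomposition to produce the required standard refinement.

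First, I would observe that each image $V_k\subset E$ of the covering map $E_k\to E$ is itself an enclosure in $A$: since an affine Weyl group element carries root half-spaces to root half-spaces, the image of $E_k$ is cut out of $A$ by finitely many root half-space inequalities, hence is bounded by finitely many root hyperplanes $H_k^{(1)},\ldots,H_k^{(n_k)}$. Collecting these as $k$ varies gives a finite set $\mathcal{H}$ of root hyperplanes. Group $\mathcal{H}$ by direction: for each direction $a$ that occurs, list the parallel members in that direction in order as $H^a_1,\ldots,H^a_{k_a}$, padding if necessary with auxiliary parallel hyperplanes placed far outside $E$ so that the convention $k_a\geq 2$ of the strip construction holds. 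From this data I form the strips $S^a_0,\ldots,S^a_{2k_a}$ and the associated enclosures
$$ U_J := E\cap S^1_{j_1}\cap\cdots\cap S^a_{j_a} $$
indexed by $J=(j_1,\ldots,j_a)$. Each $U_J$ is an intersection of $E$ with sets each defined by at most two root half-space inequalities, so it is itself a bounded intersection of root half-spaces, hence an enclosure; there are finitely many $U_J$; and because for each $a$ the strips $S^a_\bullet$ cover $A$, the $\{U_J\}$ cover $E$.

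The heart of the proof is to show that $\{U_J\}$ refines $\{V_k\}$, i.e.\ each $U_J$ is contained in some $V_k$. By construction, no hyperplane of $\mathcal{H}$ crosses the interior of any strip $S^a_{j_a}$: parallel hyperplanes in the same direction never meet, and in the transverse direction the strips are separated exactly by those hyperplanes. Consequently, for every $H\in\mathcal{H}$ the set $U_J$ lies entirely on one closed side of $H$ (or on $H$ itself). Now pick any $x$ in the relative interior of $U_J$; since $\{V_k\}$ covers $E$ and $U_J\subset E$, some $V_k$ contains $x$. Because $V_k$ is defined by half-space inequalities against members of $\mathcal{H}$, and $U_J$ does not straddle any such hyperplane, every inequality satisfied at $x$ is satisfied on all of $U_J$. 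Hence $U_J\subset V_k$. To exhibit the factorization of the inclusion $U_J\hookrightarrow E$ through $E_k\to E$ as required of a refinement, one uses that $E_k\to V_k$ is induced by an affine Weyl bijection of $A$, so that the preimage of $U_J$ in $E_k$ is again an enclosure mapping affinely onto $U_J$.

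The main obstacle I would expect is careful bookkeeping around lower-dimensional $U_J$ (those for which several $j_a$ are odd, forcing $U_J$ onto a flat) and around directions of $\mathcal{H}$ represented by only one hyperplane, which do not immediately fit the $k_a\geq 2$ normalization; the padding trick above resolves the latter, and for the former one must check that the ``one closed side of $H$ or on $H$ itself'' dichotomy still suffices to guarantee the containment $U_J\subset V_k$. Once these conventions are pinned down, the hyperplane-arrangement argument is routine, and none of the small-object-style machinery developed elsewhere in the paper is needed for this lemma.
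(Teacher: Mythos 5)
Your argument is correct and is the natural one: the paper states this lemma without proof (by its stated convention that all lemmas are quasi-theorems), and the standard covering you build from the arrangement of all boundary root hyperplanes of the images $V_k$, together with the observation that no $U_J$ straddles any hyperplane of that arrangement (so containment of a relative interior point of $U_J$ in some $V_k$ forces $U_J\subset V_k$), is exactly the intended refinement. The points needing care --- padding directions with only one hyperplane, lower-dimensional $U_J$, and factoring $U_J\hookrightarrow E$ through $E_k\to E$ --- are ones you already flag and handle correctly.
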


We remark that in a standard covering $\{ U_J\}$, the intersections of elements are again elements, since intersections of strips
are included as strips too (that was why we included the $H_i$ themselves).
We may now give a more explicit description of the folding maps.

\begin{corollary}
Suppose $E$ and $F$ are enclosures. Any folding map, that is to say a map to the associated sheaf $E\rightarrow \widetilde{F}$,
is given by taking a standard covering $\{ U_J\}$ and assigning for each $J\subset \{ H^+_i, H^-_i\}$
an affine map $a_J: U_J\rightarrow F$, subject to the condition that if $U_K\subset U_J$ then
$a_J |_{U_K} = a_K$. Two folding maps are the same if and only if they are the same pointwise, which is equivalent to saying
that they are the same on a common refinement of the two covers; a common refinement may be obtained by taking the unions of
the sequences of hyperplanes.
\end{corollary}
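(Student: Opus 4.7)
\smallskip

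\noindent\emph{Proof plan.} The approach is to unpack the definition of the sheaf $\widetilde{F}$ and combine it with the refinement lemma just proved. A section of $\widetilde{F}$ over $E$ is by definition an equivalence class of matching families: a cover $\{V_k \to E\}$ in $\Enc$ together with affine maps $b_k \colon V_k \to F$ that agree on pairwise overlaps, two such data being equivalent when they agree on a common refinement. A preliminary observation I would use throughout is that the representable presheaf $\Hom(-,F)$ is already separated, since an affine map of enclosures is determined by its restriction to any nonempty open subset; so the only content of the sheafification at this level is the formal addition of glueings.

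First I would apply the preceding lemma to replace the given cover $\{V_k\}$ by a standard refinement $\{U_J\}$. Because the ambient family of strips is closed under intersection (this is the reason for including the hyperplanes $H_i$ themselves as odd-indexed strips), for any indices $J$ and $K$ the overlap $U_J \cap U_K$ is itself one of the strata $U_L$, where $L$ is obtained coordinate-wise from the more restrictive of the two choices of half-space on each axis. For each $J$ I pick some $k(J)$ with $U_J \subset V_{k(J)}$ and set $a_J := b_{k(J)}|_{U_J}$; separatedness of $\Hom(-,F)$ and the matching condition on $\{b_k\}$ make $a_J$ independent of this choice, and the nestedness condition $a_J|_{U_K} = a_K$ whenever $U_K \subset U_J$ then follows, because both sides are restrictions of the same affine map to a $V_k$ that contains $U_J$.

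Conversely, any datum $(\{U_J\}, \{a_J\})$ satisfying the stated nestedness condition is automatically a matching family for the cover $\{U_J \to E\}$: on an overlap $U_J \cap U_K = U_L$ both $a_J$ and $a_K$ restrict to $a_L$. This produces a section of $\widetilde{F}(E)$, and the two constructions are mutually inverse up to further refinement, which establishes the existence half of the statement. For the equality clause I would take two standard representatives $(\{U_J\}, \{a_J\})$ and $(\{U'_{J'}\}, \{a'_{J'}\})$ of sections and observe that the union of their underlying sequences of parallel hyperplanes produces a standard common refinement; on each stratum of this refinement both sections become honest affine maps to $F$. Since affine maps of enclosures are determined by their pointwise values, agreement of the two sections of $\widetilde{F}$ is equivalent to agreement as pointwise maps $E \to F$.

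The main obstacle I anticipate is essentially bookkeeping: ensuring that the closure-under-intersection property of a standard family really is used consistently in identifying $U_J \cap U_K$ with some $U_L$, and that the various refinement choices land in a canonical form so that the matching-family equivalence is well-behaved. Once this is spelled out, the statement is a direct consequence of sheafification together with the fact that $\Hom(-,F)$ is a separated presheaf on $\Enc$.
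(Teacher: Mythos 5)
Your argument is correct and follows exactly the route the paper intends (the paper itself omits the proof, per its stated convention): combine the preceding refinement lemma with the remark that a standard covering is closed under intersections, so that the nestedness condition $a_J|_{U_K}=a_K$ already yields a matching family, and then invoke separatedness of $\Hom(-,F)$ so that sheafification adds only glueings and equality of sections reduces to pointwise agreement on a common standard refinement. The only cosmetic point is your justification of separatedness: the cleaner reason is that the images of the covering pieces exhaust $E$, so agreement on each piece is already pointwise agreement on all of $E$, which is equality of morphisms in $\Enc$.
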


\begin{corollary}
A folding map between enclosures is finite-to-one.
\end{corollary}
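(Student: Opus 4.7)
The plan is to read off the statement directly from the preceding corollary, with two small observations. Let $f : E \to \widetilde{F}$ be a folding map. By that corollary, $f$ is determined by a standard covering $\{U_J\}_{J}$ of $E$ together with affine maps $a_J : U_J \to F$ which agree on overlaps. The pointwise map $E \to F$ underlying $f$ is then the one whose value at $x \in U_J$ is $a_J(x)$.

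First, I would note that a standard covering is automatically finite: by the construction preceding the lemma, it is indexed by tuples $J = (j_1, \ldots, j_a)$ with $0 \le j_i \le 2k_i$, where the number $a$ of hyperplane directions and the number $k_i$ of hyperplanes in each direction are finite. So there are only finitely many pieces $U_J$.

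Second, I would observe that each $a_J : U_J \to F$ is the restriction of an affine Weyl group element acting on the ambient affine space $A$. Such an element is an automorphism of $A$, and in particular a bijection of $A$ onto itself; therefore its restriction $a_J$ is injective on $U_J$. Consequently, for any $y \in F$,
\[
f^{-1}(y) \;=\; \bigcup_{J} \, a_J^{-1}(y) \cap U_J,
\]
and each set $a_J^{-1}(y) \cap U_J$ contains at most one point. Hence $|f^{-1}(y)|$ is bounded by the (finite) number of pieces $U_J$ in the standard covering, which proves the claim.

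There is no real obstacle here: the only point worth checking carefully is that the notion of "affine map" in the definition of $\Enc$ really does force $a_J$ to be injective, which is immediate since affine Weyl transformations of $A$ are bijections. The bound one obtains on fiber cardinalities is explicit, namely $\prod_i (2k_i+1)$, where the $k_i$ come from a standard covering realizing $f$.
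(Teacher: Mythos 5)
Your argument is correct and is exactly the intended one: the paper states this as an immediate corollary of the standard-covering description of folding maps and gives no further proof, and you have simply spelled out the two ingredients (finiteness of the covering, injectivity of each affine piece since affine Weyl group elements are automorphisms of $A$) that make it immediate. The explicit fiber bound $\prod_i(2k_i+1)$ is a harmless bonus.
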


We now consider a sheaf $F$ on $\Enc$. We say that it is {\em finitely generated} if there is a finite collection of
maps $E_i\rightarrow F$ from enclosures, such that the map of presheaves
$$
\coprod E_i \rightarrow F
$$
is surjective in the sheaf-theoretical sense, i.e. it induces a surjection of associated sheaves.

We say that $F$ is {\em finitely related} if, for any two maps from enclosures $E,E'\rightarrow F$, the
fiber product $E\times _FE'$ is finitely generated. We say that $F$ is {\em finitely presented} if it is finitely
related and finitely generated.

\begin{lemma}
If $E$ is an enclosure, then any subsheaf $F\subset \widetilde{E}$ is finitely related. More generally if
$\{ E_i\} _{i=1,\ldots , n}$ is a nonempty collection of enclosures, then any subsheaf
$$
F\subset \prod _i \widetilde{E}_i
$$
is finitely related.
\end{lemma}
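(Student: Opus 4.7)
The plan is to reduce to the case $n = 1$ and then compute the fiber product concretely using standard coverings. Because $F$ is a subsheaf of $\prod_i \widetilde{E}_i$, the inclusion is a monomorphism of sheaves, so for any two maps $\alpha : E' \to F$ and $\beta : E'' \to F$ from enclosures one has
$$
E' \times_F E'' \;=\; E' \times_{\prod_i \widetilde{E}_i} E''.
$$
Thus the task becomes to show that the sheaf of pairs of affine maps $(f : T \to E',\, g : T \to E'')$ such that the two induced folding maps $T \to \widetilde{E}_i$ coincide for every $i$, is finitely generated.

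Applying the preceding corollary simultaneously for all $i$, I would choose a single standard covering $\{U_L\}$ of $E'$ such that on each $U_L$ the folding map to $E_i$ is given by an affine map $a_L^i : U_L \to E_i$, and similarly a standard covering $\{V_M\}$ of $E''$ with affine maps $b_M^i : V_M \to E_i$. For each pair $(L, M)$ I would then define an enclosure $W_{L,M}$ parametrizing those $x \in U_L$ admitting some $y \in V_M$ with $a_L^i(x) = b_M^i(y)$ for all $i$. When $n=1$, $W_{L,M}$ is just the preimage under $a_L$ of the enclosure $a_L(U_L) \cap b_M(V_M) \subset E$, and is itself an enclosure. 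For general $n$, the condition $(b_M^i)^{-1} a_L^i(x) = (b_M^j)^{-1} a_L^j(x)$ for all $i, j$ is an equalizer of pairs of restrictions of affine Weyl group elements; this equalizer is the fixed locus of a Weyl group element, hence an intersection of Weyl hyperplanes. Intersected with $U_L$ and with the domains of definition of the partial maps, it yields a (possibly lower-dimensional) enclosure. The inclusion $W_{L,M} \hookrightarrow U_L \subset E'$ together with the common value $y$ viewed as a function of $x$ gives a map $W_{L,M} \to E' \times_F E''$.

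To see that these finitely many maps jointly cover the fiber product, consider any test pair $(f, g) : T \to E' \times_F E''$. Since $f$ and $g$ are affine maps between representables, the subsets $T_{L,M} := f^{-1}(U_L) \cap g^{-1}(V_M)$ are enclosures and they cover $T$. On each $T_{L,M}$, the compatibility condition forces $a_L^i \circ f = b_M^i \circ g$ for all $i$, so the restriction of $f$ sends $T_{L,M}$ into $W_{L,M}$ and provides the required factorization. Hence $\coprod_{L,M} W_{L,M}$ surjects onto $E' \times_F E''$ in the sheaf-theoretic sense; since $(L, M)$ ranges over a finite set, this proves $F$ is finitely related.

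The main obstacle is verifying the key geometric claim used to construct $W_{L,M}$ for $n \geq 2$: that the equalizer of two restrictions of affine Weyl group elements, intersected with an enclosure, is again a finite intersection of root half-spaces. This reduces to the structural fact that the fixed locus of any element of the affine Weyl group is an intersection of Weyl hyperplanes, which for $SL_3$ where $A = \RR^2$ is elementary, but in general requires the standard structure theory of affine reflection groups. Granted this, the remaining assertions — that affine maps pull enclosures back to enclosures, that standard coverings admit common refinements, and that $E' \times_F E''$ equals $E' \times_{\prod_i \widetilde{E}_i} E''$ because $F \hookrightarrow \prod_i \widetilde{E}_i$ is a monomorphism — are routine bookkeeping.
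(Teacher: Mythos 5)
Your proposal is correct and follows essentially the same route as the paper's proof: reduce to affine pieces via a common standard covering, use the first factor to eliminate one of the two variables (the paper normalizes $\zeta_1=\zeta_1'=\mathrm{id}$ so that $x'=x$; you invert $b^1_M$), and observe that the remaining compatibility conditions are fixed loci of affine Weyl group elements, hence intersections of Weyl hyperplanes with the given enclosures. The paper phrases this last point more tersely (``define Weyl hyperplanes or are always true''), while you correctly flag that the fixed locus could a priori be lower-dimensional or empty, but the substance is identical.
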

\begin{proof}
We need to consider the fiber product $G\times _FG'$ for two maps from enclosures $G,G'\rightarrow F$.
These maps correspond to sequences $(\zeta _1,\ldots , \zeta _n)$ with $\zeta _i:G\rightarrow \widetilde{E_i}$,
resp.  $(\zeta '_1,\ldots , \zeta '_n)$ with $\zeta '_i:G'\rightarrow \widetilde{E_i}$.

Suppose $G=\bigcup G_J$ (resp. $G'=\bigcup G'_{J'}$) are (finite) coverings by enclosures. Suppose we can prove that
$G_J\times _FG'_{J'}$ are finitely generated. One can then conclude that $G\times _FG'$ is finitely generated.

Apply this to a common refinement of the coverings needed to define the folding maps $\zeta _i$, in the
above standard form. We conclude that it suffices to consider the case where $\zeta _i$ are affine maps.

Now in this case (and no longer using the notation for the coverings), the fiber product is expressed as
$$
G\times _FG' = \{ (x,x') \mbox{ s.t. } \zeta _i (x) = \zeta '_i(x') \mbox{ for } i=1,\ldots , n\} .
$$
This expression is somewhat heuristic as we are really talking about sheaves but it serves to indicate the proof.

Since $\zeta _1$ is an isomorphism we may assume that it is the identity and same for $\zeta '_1$.
Therefore, the first condition says that $x'=x$ and with this normalization, we may write
$$
G\times _FG' = \{  x\in A \mbox{ s.t. } x\in G\cap G', \;\; \zeta _i (x) = \zeta '_i(x) \mbox{ for } i=2,\ldots , n\} .
$$
The conditions $\zeta _i (x) = \zeta '_i(x)$ define Weyl hyperplanes or are always  true, so this represents
$G\times _FG' $ as an enclosure. This completes the proof.
\end{proof}

Note that $E\times E'$ will never be finitely generated as soon as ${\rm dim} (A) \geq 1$. Therefore, the
empty direct product which is to say the terminal object $\ast$ in sheaves on enclosures, is not finitely related.
The above proof used $n\geq 1$ in an essential way.

One should not confuse the terminal object $\ast$ with the enclosure $p$ when $p\in A$ is a point.
There are no maps $E\rightarrow \tilde{p}$
for $E$ different from a point or the empty set.

A {\em construction} is a finitely related sheaf on the site of enclosures.

\begin{theorem}
The category of constructions is closed under finite colimits, and fiber products.
\end{theorem}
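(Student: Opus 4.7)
\emph{Plan.} Finite colimits are built from the empty colimit, finite coproducts and coequalizers, and the ambient category of sheaves of sets on $\Enc$ already has all of these together with fiber products; the task is therefore only to verify that the finite relatedness condition is preserved by each of these operations. Throughout, the strategy is to apply the preceding Lemma (on subsheaves of $\prod_i \widetilde{E}_i$) by reducing the relevant diagrams of maps between enclosures to a normal form on a common standard covering, at which point the equalizer of two affine maps between enclosures is cut out by Weyl-linear equations and is a finite union of enclosures.

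\emph{Coproducts and initial object.} Enclosures are convex, hence connected, so any section $\widetilde{E} \to \coprod_i F_i$ factors through a single summand $F_{i_0}$. Given two maps $E, E' \to \coprod_i F_i$, the fiber product $E \times_{\coprod_i F_i} E'$ is then either empty (if the maps hit distinct summands) or equals $E \times_{F_j} E'$ for the common summand, which is finitely generated by hypothesis. The empty sheaf (initial object) is vacuously finitely related.

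\emph{Fiber products and coequalizers.} Given $F \to H \leftarrow G$, set $P = F \times_H G$ and note
$$
E \times_P E' \;=\; (E \times_F E') \times_{E \times E'} (E \times_G E').
$$
Choosing a finite generation $\coprod_k D_k \twoheadrightarrow E \times_F E'$ (by finite relatedness of $F$), the preimage of $E \times_G E'$ inside each $D_k$ is the equalizer $\mathrm{Eq}(\gamma_k, \delta_k)$ of the two composed maps $\gamma_k, \delta_k \colon D_k \rightrightarrows G$. Finite relatedness of $G$ makes $Q_k := D_k \times_G D_k$ finitely generated, and $\mathrm{Eq}(\gamma_k, \delta_k) = D_k \times_{D_k \times D_k} Q_k$ via the diagonal; each generator $D_k' \to Q_k$ yields two maps $D_k' \rightrightarrows D_k$ which, on a common standard covering of $D_k'$, present themselves as affine, and the equalizer of such a pair is cut out piecewise by Weyl-linear equations, hence a finite union of enclosures. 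Summing over $k$ gives that $E \times_P E'$ is finitely generated and $P$ is finitely related. The same equalizer machinery handles coequalizers: for $F \rightrightarrows G$ with sheaf-theoretic coequalizer $H$, any map $E \to H$ admits on a cover of $E$ a lift to $G$ by local surjectivity of $G \to H$, and the pullback $E \times_H E'$ assembles from pieces $E_a \times_G E'_b$ enlarged by identifications induced from $F$, each finitely generated by the preceding equalizer argument together with finite relatedness of $F$.

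\emph{Main obstacle.} The crux is the equalizer step: given two maps from an enclosure $D$ to a construction, show their equalizer is finitely generated. Without assuming the target is itself finitely generated, one cannot simply factor the maps through enclosure-level generators, and must instead combine finite relatedness with the normal form on standard coverings to descend to a purely affine problem. The coequalizer case is further delicate because the sheafification involved in forming $H$ can introduce identifications that are not obviously controlled by finitely many generators of $F$, and a careful accounting here may require a small-object-type argument analogous to the one the authors invoke elsewhere in the paper.
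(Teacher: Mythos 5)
The paper itself offers no argument for this statement (by its stated convention every ``theorem'' here is a quasi-theorem), so your proposal can only be measured against the one piece of technique the paper does supply, namely the proof of the preceding Lemma on subsheaves of $\prod_i \widetilde{E}_i$. Your treatment of coproducts (connectedness of enclosures forces factorization through a single summand) and of fiber products is sound and is in the same spirit: the identity $E\times_P E' = (E\times_F E')\times_{E\times E'}(E\times_G E')$ is correct because $F\times_H G$ is a subsheaf of $F\times G$, and your reduction of the resulting equalizer to affine maps on a common standard covering is exactly the paper's method. One point you should make explicit rather than bury in the phrase ``Weyl-linear equations'': the equalizer of two affine Weyl maps $A\to A$ is the fixed locus of an affine Weyl element, and you need this to be a finite union of \emph{enclosures} intersected with $D_k'$ --- a generic affine subspace would not qualify. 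This does hold (the fixed locus is empty, all of $A$, a root hyperplane for a reflection, or a special vertex for a rotation, and special vertices are intersections of root hyperplanes), but it is a lemma, not a tautology, and it is the precise analogue of the paper's remark that the conditions $\zeta_i(x)=\zeta_i'(x)$ ``define Weyl hyperplanes or are always true.''

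The genuine gap is the one you flag yourself: coequalizers. For a finite diagram the sheaf-theoretic coequalizer imposes the equivalence relation \emph{generated} by the elementary identifications, realized via zigzag chains of arbitrary length, followed by sheafification; to show $E\times_H E'$ is finitely generated you must show these chains stabilize over any bounded region. Your proposed reduction (``pieces $E_a\times_G E_b'$ enlarged by identifications induced from $F$'') does not yet control the iteration: a single elementary relation can be applied repeatedly (e.g.\ a glued translation generates all its powers), and what saves you is only the boundedness of enclosures together with the discreteness of the affine Weyl group, so that only finitely many composites of the gluing maps can carry a given bounded enclosure to something meeting another. Until that finiteness is proved, closure under finite colimits --- which is the actual content of the theorem, since the fiber-product half follows from the Lemma's technique almost verbatim --- remains open in your write-up. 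Deferring it to ``a small-object-type argument'' is honest but is not a proof; note also that the paper's own folding-in construction concedes that verifying the result of such a pushout is a construction is ``a somewhat subtle technical issue which we haven't treated,'' which is precisely this gap.
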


We can define a topological space underlying a construction. If $F$ is a construction, let $F(p)$ denote the set of points,
that is to say the set of maps $p\rightarrow F$ where $p\in A$ is a point (recall from above that this is different from the
terminal sheaf $\ast$). Give $F(p)$ a topology as follows: for any enclosure $E$, $E(p)$ (which is equal to $\widetilde{E}(p)$)
has a topology as a subset of the affine space $A$. Then we say that a subset $U\subset F(p)$ is open if
its pullback to $E(p)$ is open, for any enclosure $E$ and any map $E\rightarrow F$.

\begin{conjecture}
If $F$ is a construction then the topological space $F(p)$ Hausdorff; furthermore it is a CW-complex.
\end{conjecture}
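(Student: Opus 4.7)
The plan is to build a CW structure on $F(p)$ from a compatible family of polyhedral subdivisions of enclosures mapping to $F$, and to deduce the Hausdorff property from the explicit control over identifications furnished by the finitely-related hypothesis.

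First I would choose a (possibly infinite) family of maps $\{E_i \to F\}_{i\in I}$ from enclosures such that $\coprod_i \widetilde{E_i} \to F$ is a surjection of sheaves; at the level of points this gives a surjection $\coprod_i E_i(p) \twoheadrightarrow F(p)$, and the topology defined in the statement is readily identified with the resulting quotient topology (any map from an enclosure to $F$ factors, after passing to a refinement by standard covers, through the chosen presentation). Each enclosure $E_i$ is a bounded convex polytope in the affine space $A$, hence carries a canonical polyhedral cell structure from the root hyperplane arrangement. For every ordered pair $(i,j)$, the lemma on subsheaves of products shows that $E_i \times_F E_j$ is finitely generated by enclosures, and its proof exhibits these as cut out by finitely many additional root hyperplanes inside $E_i$ and $E_j$. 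I would then simultaneously refine the cell structures on the $E_i$ by adding all these additional hyperplanes, so that every generator of every fiber product becomes a subcomplex of both sides and each affine identification $a_J\colon U_J \to F$ from the normal form of a folding map is cellular.

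Granted this refined system, the quotient $F(p) = \coprod_i E_i(p)/\!\sim$, where $\sim$ is the equivalence relation cut out by the fiber products, inherits a cell decomposition whose open $n$-cells are $\sim$-classes of open $n$-faces of the $E_i$; the attaching map of each such cell is pulled back from a single enclosure with a finite cell structure, giving the CW axioms on the nose. For the Hausdorff property, given $x\neq y$ in $F(p)$ I would lift to $\tilde x \in E_i(p)$, $\tilde y \in E_j(p)$. The points of $E_j$ equivalent to $\tilde x$ form the image of the fiber over $\tilde x$ of $E_i\times_F E_j \to E_j$, which by finite relatedness is a finite union of polyhedral pieces not containing $\tilde y$; one separates $\tilde x$ from these closed identification loci in $E_j$, does the symmetric operation in $E_i$, and propagates the separation through the equivalence relation.

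The main obstacle is controlling this propagation — equivalently, verifying closure-finiteness of the candidate CW structure — when $I$ is infinite and the $\sim$-orbit of a given point meets infinitely many $E_i$ accumulating near a common image in $F(p)$. Finite relatedness alone does not prevent such accumulation: two enclosures can have nearby images without any pair of identifications being imposed between them, which in principle can create a non-Hausdorff quotient or a non-closure-finite cell structure. I expect that the conjecture as stated needs a local finiteness hypothesis on the presentation (each point of $F(p)$ has a neighborhood meeting only finitely many $E_i$), which should be automatic for constructions arising from the initial construction and subsequent foldings in the next sections, where the $\phi$-harmonic map from $X$ supplies the required geometric rigidity. Once such local finiteness is available, the refinement argument above yields simultaneously closure-finiteness, the weak topology, and Hausdorffness without further work.
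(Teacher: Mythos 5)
This statement is one of the items the paper deliberately labels a \emph{conjecture}: by the authors' own convention that means an open problem, and they give no proof. In fact, immediately after stating it they add the caveat that ``it might be necessary to add additional hypotheses on $F$ in order to insure that $F(p)$ is a CW complex.'' So there is no argument in the paper to compare yours against; what can be assessed is whether your sketch closes the problem, and it does not --- as you yourself say in your final paragraph. The difficulty you isolate is exactly the right one, and it is genuinely fatal to the argument as written: finite relatedness is a \emph{pairwise} condition on maps from enclosures, and a finitely related sheaf need not be finitely generated, nor even locally finitely generated. Consequently (i) your presentation $\coprod_i E_i \to F$ may require infinitely many enclosures whose images accumulate; (ii) your refinement step already breaks down before the quotient is formed, because for a fixed $E_i$ the hyperplanes contributed by $E_i \times_F E_j$ as $j$ ranges over an infinite index set can be infinite in number and dense in $E_i$, so there is no common polyhedral refinement; and (iii) even granting cellular identifications, a quotient of a disjoint union of finite complexes by a cellular equivalence relation need be neither closure-finite nor Hausdorff without local finiteness of the saturation. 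Your proposed fix --- adding a local finiteness hypothesis on the presentation --- is the natural one and matches the authors' own hedge, but verifying it for the constructions actually arising from the initial construction and the folding process is precisely the unresolved content of the conjecture, not something that follows from the axioms of a construction as given.

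Two smaller points. First, your identification of the paper's topology on $F(p)$ with the quotient topology from a chosen sheaf-surjective presentation is fine, but it deserves a sentence: an arbitrary map $E \to F$ only lifts through the presentation after passing to a finite covering of $E$, and one must note that a finite closed covering by affine images induces the final topology on $E(p)$. Second, in the Hausdorff step, ``propagates the separation through the equivalence relation'' is where the same accumulation issue reappears in disguise: to produce disjoint \emph{saturated} open sets you must control the full equivalence classes across all $E_k$ simultaneously, which again is the missing local finiteness. In short, your diagnosis is sound and your strategy is a reasonable one for a corrected statement, but as a proof of the conjecture as stated it has a real gap --- the same gap the authors flag --- and the statement should be treated as open.
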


It might be necessary to add additional hypotheses on $F$ in order to insure that $F(p)$ is a CW complex.

\subsection{Spherical theory}

We would like to consider the local structure of a construction at a point. For this we need a spherical version of the above
theory. It seems like we probably don't need to consider ``enclosures'' in the spherical building but only ``sectors''.
A {\em sector} is a minimal closed chamber of a given dimension in the spherical complex associated to $A$.
The set of sectors is partially ordered by inclusion and the spherical Weyl group acts on it. It is a finite topological space,
in particular it has a structure of site (the only coverings of a sector must include that sector itself).
Let $\Sector$ be the category of sectors.

A {\em spherical construction} is a presheaf or equivalently sheaf on $\Sector$.

We have a map from $\Sector$ to the filters on $A$ located at any given point.

Suppose $F$ is a sheaf on $\Enc$ and $x\in F(p)$. We would like to associate the spherical construction $F_x$,
defined as follows: if $\sigma \in \Sector$ then $\sigma$ corresponds to a filter of enclosures,
that is to say a filtered category of enclosures $E$ with $p$ on the boundary of $E$ and whose local corner at $p$ looks like
$\sigma$. Call this category $\langle \sigma \rangle$. For $E\in \langle \sigma \rangle$,
consider the set $F(E)_x$ consisting of maps $E\rightarrow F$ such that $p$ maps to $x$.  Then we set
$$
F_x (\sigma ) := \lim _{\rightarrow , E\in \langle \sigma \rangle} F(E)_x .
$$
An element of $F_x(\sigma )$ therefore consists of a germ of map $E\rightarrow F$ sending $p$ to $x$, such that
the corner of $E$ at $p$ is $\sigma$. These germs are
up to equivalence that if two maps agree on a smaller enclosure also containing $p$ and having $\sigma$ as corner, then
the two germs are said to be equivalent.

\subsection{$\rr$-trees}
When the group is $SL_2$, the standard apartment is just $\rr$ and the enclosures are closed bounded
segments. The category of constructions gives a good point of view for the theory of $\rr$-trees.
For example, if $q$ is a quadratic differential defining a spectral multivalued differential
$\phi _i=\pm \sqrt{q}$ on a compact Riemann surface $X$, then the tree $T_{\varphi}$ of leaves of
the foliation $\Re \phi$ on $\widetilde{X}$ may be seen as a sheaf on $\Enc_{SL_2}$ as follows:
for a segment $E$ let $T^{\rm pre}_{\phi}(E)$ be the space of differentiable maps $E\rightarrow \widetilde{X}$
which are transverse to the foliation such that the pullback of $\Re\phi$ is the standard
differential $dx$ on $E\subset \rr$. These maps are taken
modulo the relation that two maps are the same if they map points of $E$ to the same leaves of the
foliation. Then $T_{\phi}$ is the associated sheaf. Thus $T_{\phi}(E)$ is the space of maps from $E$ to
the leaf space, which are represented on finitely many segments covering $E$ by differentiable maps
into $\widetilde{X}$.

\subsection{The $SL_3$ case}
We now specialize to the case of buildings for the group $SL_3$. The affine space is $A=\rr^2$. The spherical Weyl group
is the symmetric group acting through its irreducible $2$-dimensional representation.
Some enclosures were pictured in Figure \ref{someencs}.
There are three directions of reflection hyperplanes. These divide the vector space at the origin into six $2$-dimensional
sectors, acted upon transitively by the Weyl group. On the other hand, there are two orbits for the $1$-dimensional sectors, the
even vertices of the hexagon and the odd vertices. Therefore our category of sectors $\Sector$ is equivalent to the following category
$\Sector '$:
there are an object $\eta$, corresponding to the $2$-dimensional sectors, and two objects $\nu ^+$ and $\nu ^-$
corresponding to the $1$-dimensional sectors. We choose one of these denoted $\nu ^+$ which we say has
positive orientation. The morphisms are
$$
\nu ^- \rightarrow \eta \leftarrow \nu ^+.
$$
A spherical construction $H$ is a sheaf on $\Sector '$. This consists of three sets $H(\eta )$, $H(\nu ^+)$ and
$H(\nu ^-)$ with morphisms
$$
H(\nu ^- ) \leftarrow H(\eta ) \rightarrow H( \nu ^+) .
$$
Such a structure may be viewed as a graph whose edges are $H(\eta )$ with vertices grouped into the
positive ones $H(\nu ^+)$ and negative ones $H(\nu ^-)$. Each edge joins a positive vertex to a negative vertex.
A spherical construction is equivalent to such a graph.

If we have a construction $F$ for this Weyl group and if $x\in F(p)$ is a point then we obtain a spherical construction $F_x$ which
is a graph as above.

Following the simple characterization which was given
for example by Abramenko and Brown \cite{AbramenkoBrown}, we say that a spherical construction is a {\em spherical building} if any two vertices are at distance $\leq 3$, every pair of vertices
is contained in a hexagon,  and if there
are no loops of length $\leq 4$ (the length of a loop has to be even because of the parity property of edges).

A spherical construction is a {\em spherical pre-building} if it is connected, if every node is contained in at least one edge,
and if it has no loops of length $\leq 4$.
The construction of \cite{KNPS} gives a way of going from a spherical pre-building to a spherical building.

In a spherical pre-building, say that two nodes are {\em opposite} if they have opposite parity and are at distance
$3$. If it is spherical building this means that
they are opposite nodes of any
hexagon containing them.

A {\em segment} is a $1$-dimensional enclosure $S\subset A$. We note that a segment has a natural orientation. At any point $x\in S(p)$ in the interior, the spherical building $S_x$
has two elements, $S_{x,+}$ and $S_{x,-}$, not joined by any edge; they are of opposite parity
and the positive direction in $S(p)$ is defined to be the direction going towards $S_{x,+}$.
If $x$ is an endpoint then $S_x$ has one element, oriented positive or negative respectively
at the two endpoints of $S$.

Let us denote by $S^{t,+}$ the segment of length $t$ based at the origin, such that the parity of the single element of
$S^{t,+}_0$ is positive. We assume that these segments are all in the same line so that $S^{t,+}\subset S^{t',+}$ when
$t<t'$. Let $S^{t,-}$ denote the segment with the opposite orientation at the origin.

\begin{remark}
Suppose $F$ is a construction. If $\varphi : S^{t,+}\rightarrow F$ is a morphism then for $x\in S^{t,+}$, the image
of the positive (resp. negative) element of $S^{t,+}_x$ under $\varphi$ is denoted $\varphi _{x,+}$ (resp. $\varphi _{x,-}$)
and it is a positive (resp. negative) node in the spherical building $F_{\varphi (x)}$. Same for $S^{t,+-}$.
\end{remark}

\begin{definition}
\label{immersive}
Suppose $E$ is an enclosure, and $F$ a construction satisfying SPB-loc. We say that a map $f:E\rightarrow F$ is {\em immersive}
at a point $x\in E(p)$, if the map of spherical constructions
$E_x\rightarrow F_{f(x)}$ preserves distances. The map is {\em immersive} if it is immersive at all points of
$E(p)$.
\end{definition}

Here, the distances in $E_x$ are
calculated by considering $E_x\subset A_x$.
In particular, if $E$ is a segment then we consider the
distance between the two elements of $E_x$ to be $3$.

Thus a map from a segment $\varphi : S\rightarrow F$
is immersive at an interior point $x\in S$ if $\varphi _{x,+}$ and $\varphi _{x,-}$ are opposite in
the spherical pre-building $F_{\varphi (x)}$. We also use the terminology
{\em straight} for an immersive segment, and say that a map from a segment is {\em angular} otherwise.

Intuitively, a map $f: E\rightarrow F$ is
immersive if and only if $f(p):E(p)\rightarrow F(p)$ is locally injective.

\begin{lemma}
Suppose that the spherical constructions $F_x$ are at least spherical pre-buildings.
A map $\varphi $ is immersive at all but at most finitely many angular points. We say that $\varphi$ is {\em immersive}
if there are no angular points.
\end{lemma}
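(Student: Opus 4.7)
My plan is to derive finiteness of the angular locus from a piecewise-affine description of $\varphi\colon S\to F$, in which the breakpoints contain all potentially angular points.

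First I would produce a finite subdivision $0=x_0<x_1<\cdots<x_n=\ell$ of $S$ together with, for each $j$, an enclosure $E_j$, an affine map $a_j\colon S_j:=[x_{j-1},x_j]\to E_j$, and a morphism $g_j\colon E_j\to F$ satisfying $\varphi|_{S_j}=g_j\circ a_j$. Such a description should follow from the structural result on morphisms (analogous to the corollary characterizing folding maps between enclosures), applied together with the fact that each point of $\varphi(S)$ lies in the image of some enclosure mapping to $F$, and compactness of $S$. I would then refine this description by replacing each $E_j$ by a small two-dimensional enclosure containing $a_j(S_j)$ as a straight interior segment, arranged so that $g_j$ is injective on an open neighborhood of $a_j(S_j)$.

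With such a description in hand, I would argue directly that any interior point $x\in(x_{j-1},x_j)$ is immersive. The spherical map at $x$ factors as
$$
S_x = (S_j)_x \xrightarrow{(a_j)_x} (E_j)_{a_j(x)} \xrightarrow{(g_j)_{a_j(x)}} F_{\varphi(x)}.
$$
The first arrow is induced by an affine embedding of a straight segment, so sends the two nodes of $S_x$ (at distance $3$ by the segment convention) to two opposite nodes of the apartment-piece $(E_j)_{a_j(x)}$. By the injectivity arranged in the refinement, the second arrow is a distance-preserving isomorphism onto its image. Hence $\varphi_{x,+}$ and $\varphi_{x,-}$ remain at distance $3$ in the pre-building $F_{\varphi(x)}$, so $\varphi$ is immersive at $x$, and the angular locus is contained in the finite set $\{x_1,\ldots,x_{n-1}\}$.

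The main obstacle is the refinement step, namely ensuring that each $g_j$ can be taken to be a local isomorphism at interior points of $a_j(S_j)$. The concern is that $g_j$ might fold a two-dimensional neighborhood of the straight segment onto itself in $F$. Here the spherical pre-building hypothesis on each $F_y$ is essential, since any such local fold would force a loop of length at most $4$ in some $F_y$, contradicting the pre-building axiom. Making this precise will likely require a careful shrinking argument combined with the finite-to-one property of folding maps between enclosures, and may call for a preliminary lemma asserting that every morphism from an enclosure into a construction with pre-building sphericals is an immersion away from a codimension-one subset.
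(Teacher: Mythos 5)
The paper itself states this lemma with no proof at all (it is one of the ``quasi-theorems'' covered by the paper's Convention), so your proposal can only be judged on its own terms. Its skeleton --- a finite subdivision of $S$ into pieces on which $\varphi$ factors as an affine map $a_j$ into an enclosure $E_j$ followed by a morphism $g_j\colon E_j\to F$, then an analysis of immersivity at interior points of each piece --- is the natural strategy and is surely close to what the authors have in mind. (Producing the finite subdivision does require finite generation of $F$, or at least local representability of sections over segments, together with the finiteness of coverings in $\Enc$; that is a foundational point worth flagging but reasonable to grant.)

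The genuine gap is in your second step. The claim that a fold of a two-dimensional neighborhood of $a_j(S_j)$ ``would force a loop of length at most $4$ in some $F_y$'' is false, and with it the assertion that $(g_j)_{a_j(x)}$ can be arranged to be a distance-preserving isomorphism onto its image. A hexagonal link can fold two-to-one onto a path of length three: the image is a tree, no cycle is created, and SPB-loc is untouched --- this is exactly the folding along caustics and fold lines that the whole paper is about. If such a fold line of $g_j$ crosses $a_j(S_j)$ transversally at an interior point $x$, the two opposite nodes of $S_x$ land at distance $1$ in $F_{\varphi(x)}$, and $\varphi$ genuinely is angular there; so folds cannot be excluded, and angular points in the interiors of your pieces do occur. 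The argument you need is the opposite one: finite relatedness (via the standard-covering description of folding maps and the finite-to-one property) confines the fold locus of $g_j$ on $E_j$ to finitely many Weyl hyperplanes, and each such hyperplane either meets the affine segment $a_j(S_j)$ in at most one point or contains it entirely; in the latter case the two segment directions are the nodes fixed by the fold and remain at distance $3$, the exclusion of loops of length $\le 4$ in the pre-building $F_y$ being what forbids further degeneration of the folded path. This yields finitely many angular points on each piece, and adding the finitely many subdivision points (where one must also allow the distance to jump to $\ge 5$, not only drop to $1$) proves the lemma. A secondary problem is your refinement step: extending a segment in $F$ to a two-dimensional enclosure mapping to $F$ is an extension property in the spirit of Ex-Side and is not available under the hypotheses of the lemma; it is better to take the $E_j$ to be generating enclosures of $F$ from the outset and to accept that they may fold.
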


We now list some extension conditions for a construction $F$ (let us reiterate that we are working here in the $SL_3$ situation).

\noindent
{\bf SPB-loc} : that for any $x\in F(p)$ the spherical construction $F_x$ is a spherical pre-building.

\noindent
{\bf SB-loc} : for any $x\in F(p)$ the spherical construction $F_x$ is a spherical building.

\noindent
{\bf Ex-Seg} : let $t$ denote the other endpoint of $S^{t,+}$.
Assuming SPB-loc, for any immersive map $\varphi : S^{t,+}\rightarrow F$ and any element $\nu \in F_{\varphi (t)}$ opposite to
$\varphi ^-_t$, and for any $t'>t$ we ask that there exist an extension of $\varphi $ to an immersive map $\varphi ': [0,t']\rightarrow F$
such that $(\varphi ')^+_t= \nu$. Similarly for segments $S^{t,-}$ in the opposite orientation.

We next get to our main extension statement, for obtuse angles. Some notation will be needed first.

Let $P^{a,b, +}$ be parallelograms centered at the origin with side lengths $a$ and $b$, and positive orientation
of the two edges at the origin
(resp. $P^{a,b, -}$ with negative orientation). We may assume that the first edge is the segment
$S^{a,+}$, and denote the second edge by $\omega S^{b,+}$ (it is obtained by rotating by $120$ degrees).

Similarly let $T^a$ denote the triangle with edge length $a$. We consider both segments $S^{a,+}$ and
$>S^{a,-}$ to be edges of $T^a$ starting from the origin.

\noindent
{\bf Ex-Obt} : Assuming SPB-loc, suppose we are given maps $\varphi : S^{a,+}\rightarrow F$
and $\psi : \omega  S^{b,+}\rightarrow F$ such that $\varphi (0)=\psi (0)=x$. Suppose that
the distance between the elements $\varphi _{0,+}$ and $\psi _{0,+}$ in the spherical building $F_x$ is
$\geq 2$ (i.e. they are distinct). Suppose that the maps $\varphi$ and
$\psi$ are immersive. Then there exists an immersive map $\zeta : P^{a,b, +}\rightarrow F$
coinciding with the given maps $\varphi$ and $\psi$ on the edges. We also ask the same condition
with the other orientation, for $P^{a,b,-}$.

\noindent
{\bf Ex-Side} :
given a segment $\varphi : S\rightarrow F$ of length $a$ with $x=f(0)$ one of the endpoints, and an edge in the spherical
building $\nu \in F_x$,
$\varphi$ extends to an immersed triangle $T^a\rightarrow F$ such that $\nu$ is not in the image of $\varphi _0$.

\begin{lemma}
Suppose $F$ is a construction satisfying SPB-loc, Ex-Obt and Ex-Side. Then $F$ satisfies SB-loc.
\end{lemma}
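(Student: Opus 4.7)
The plan is to verify, working in the combinatorial language of the bipartite graph $F_x$, the two axioms that promote the spherical pre-building $F_x$ to a spherical building for $SL_3$: every pair of nodes lies in a $6$-cycle (``hexagon''), and the graph diameter is at most $3$. SPB-loc already guarantees that $F_x$ is a connected bipartite graph with no $4$-cycles in which every node is incident to some edge; so the work reduces to extracting additional combinatorial strength from Ex-Side and Ex-Obt and then arguing in $F_x$ alone.

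The first step is to read off two extra properties of $F_x$. Ex-Side, applied to any node $n\in F_x$ lifted to a small segment germ at $x$, with forbidden edge $\nu$ taken to be a prescribed edge through $n$, produces an immersive triangle whose corner-edge at $x$ is an edge through $n$ distinct from $\nu$; hence every node of $F_x$ has degree at least $2$. Ex-Obt, applied to lifts of two distinct same-parity nodes $\mu,\mu'$ to small immersive segment germs $\varphi:S^{a,+}\to F$ and $\psi:\omega S^{b,+}\to F$ at $x$ (the distinctness of $\mu,\mu'$ supplies the distance-$\geq 2$ hypothesis), produces an immersive parallelogram $P^{a,b,+}$ whose $120^{\circ}$ corner at $x$ decomposes into two adjacent $60^{\circ}$ sectors; these give consecutive edges of $F_x$ through an intermediate node of opposite parity, so any two distinct same-parity nodes are at graph distance exactly $2$. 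The diameter bound $\leq 3$ then follows, since a geodesic of length $\geq 4$ would contain two distinct same-parity vertices at geodesic distance $4$, contradicting this distance-$2$ property.

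For the hexagon property, I would first build a $6$-cycle through an arbitrary edge $\mu\alpha$ (with $\mu$ positive, $\alpha$ negative): pick a positive $n_3 \neq \mu$ adjacent to $\alpha$ and a negative $n_6 \neq \alpha$ adjacent to $\mu$ (both exist by the degree bound), pick a negative $m_3 \neq \alpha$ adjacent to $n_3$, and use the same-parity-distance-$2$ property for the pair $(m_3, n_6)$ to obtain an intermediate positive node $p$ with path $m_3 \to p \to n_6$. The sextuple $\mu,\alpha,n_3,m_3,p,n_6$ is the desired $6$-cycle. Every other pair $(\mu,\nu)$ reduces to this case by a judicious choice of initial edge and auxiliary neighbors: for $\mu,\nu$ same-parity at distance $2$ via $\mu\to\alpha\to\nu$, apply the edge construction to $\mu\alpha$ with $n_3=\nu$; for opposite-parity at distance $3$ via $\mu\to\alpha\to n_2\to\nu$, apply it with $n_3=n_2$ and $m_3=\nu$; and the degenerate case $\mu=\nu$ is covered by any $6$-cycle through any edge at $\mu$.

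The main obstacle will be the bookkeeping of distinctness in the last step: one has to verify that the constructed sextuple really has six distinct vertices, and does not collapse into a degenerate closed walk. Every potentially offending coincidence (for example $m_3=n_6$, $p=\mu$, or $n_3=p$) translates into a $4$-cycle among already-built edges, and is therefore forbidden by SPB-loc; but the list of coincidences to rule out must be enumerated systematically in each sub-case, and $n_3,n_6,m_3$ chosen so as to preempt those that are not automatically excluded. A smaller technical point is that each node of $F_x$ should be realizable by an immersive segment germ so that Ex-Side and Ex-Obt can be applied; this follows from the definition of $F_x$ as a colimit of germs of maps, together with the finiteness of angular points on a sufficiently short representative.
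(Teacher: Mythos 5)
Your proposal follows essentially the same route as the paper's proof: Ex-Obt gives that any two same-parity nodes of $F_x$ are at distance $\leq 2$, whence with connectivity the diameter bound $\leq 3$; Ex-Side gives that every node lies on at least two edges; and these combinatorial facts yield the hexagon property. The paper asserts the last implication without detail, whereas you carry out the explicit hexagon construction and the distinctness bookkeeping (ruled out by the no-$4$-cycle condition), so your write-up is a legitimate filling-in of the same argument rather than a different one.
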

\begin{proof}
Suppose $x\in F(p)$. By hypothesis $F_x$ is a spherical pre-building. We would like to show it is a spherical building.
From Ex-Obt, we get that any two nodes of the same parity in $F_x$ have distance $\leq 2$.
By hypothesis SPB-loc, the $F_x$ are connected graphs and all nodes are contained in edges. It follows
that any two nodes have distance $\leq 3$.
The condition Ex-Side implies that any node in any of the spherical
pre-buildings $F_x$, is contained in at least two distinct edges.
It now follows that any two nodes are contained in a hexagon.
Thus $F_x$ is a spherical building.
\end{proof}

\begin{lemma}
Suppose $F$ is a construction satisfying SB-loc, Ex-Obt and Ex-Seg.  Then it satisfies Ex-Side. Furthermore, the
sector of the triangle can be specified at either of the endpoints of the segment.
\end{lemma}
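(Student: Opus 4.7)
The plan is to realize the equilateral triangle $T^a$ as one of the two equilateral halves of a $60^\circ$/$120^\circ$ rhombus, and to build that rhombus by applying Ex-Obt at its $120^\circ$ corner, which will be $x$.

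First I would select the sector of the triangle at $x$. Let $e = \varphi^+_0 \in F_x$ be the direction of $\varphi$ at $x$. By SB-loc, $F_x$ is a spherical building of type $A_2$; since every node lies in some hexagon and each vertex of a hexagon is incident to two of its edges, $e$ has degree at least $2$ in $F_x$, so I may pick an edge $\tau$ of $F_x$ incident to $e$ with $\tau \neq \nu$ (or, to establish the ``furthermore'' claim at $x$, take $\tau$ to be the prescribed sector and verify it is not $\nu$ in the situation at hand). Let $e' \in F_x$ be the other endpoint of $\tau$, let $H \subset F_x$ be a hexagon containing $\tau$, and let $e''$ be the node of $H$ two steps from $e$ via $e'$; geometrically $e'' = \omega e$ and $e + e'' = e'$ as vectors in $A$.

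Next I would realize $e''$ geometrically and invoke Ex-Obt. The node $e'' \in F_x$ yields a germ of an immersive segment at $x$ in direction $e''$, which I represent by a short immersive segment and then extend via Ex-Seg to an immersive segment $\psi : \omega S^{a,+} \to F$ of length $a$. The segments $\varphi$ and $\psi$ are immersive, share basepoint $x$, meet at angle $120^\circ$, and have distinct initial directions (they lie at graph-distance $2$ in $H$, in particular $\geq 2$). Ex-Obt then produces an immersive parallelogram $\Pi : P^{a,a,+} \to F$ whose two edges at $x$ are $\varphi$ and $\psi$. This is a rhombus with $120^\circ$ corners at $x$ and at the opposite vertex $w = x + a(e + e'') = x + ae'$; its long diagonal from $x$ to $w$ has length $a$ and direction $e'$, cutting $P^{a,a,+}$ into two equilateral triangles of side $a$. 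Restricting $\Pi$ to the triangle that contains $\varphi$ as a side yields an immersive map $T^a \to F$ extending $\varphi$. At the corner $x$ its $60^\circ$ sector is bounded by $e$ and $e'$ and maps to $\tau \neq \nu$, so $\nu$ is not in the image of the spherical map at $x$. To specify the sector at the other endpoint $y = \varphi(a)$ instead, I would reverse the orientation of $\varphi$ and run the same construction at $y$, using $\varphi^-_a \in F_y$ in place of $e \in F_x$.

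The step I expect to be the main obstacle is verifying that the restriction $\Pi|_{T^a}$ is itself immersive. At interior points of the diagonal and at the two half-corners $x$ and $w$, the local spherical construction of $T^a$ is a proper sub-structure of that of $P^{a,a,+}$, and immersivity requires that distances inside this sub-structure agree with distances in the ambient hexagon of $F_{\Pi(p)}$. This should follow from the combinatorial observation that, in a hexagon in a spherical building, shortest paths between two vertices on the same side of a long diagonal stay on that side, together with the fact that the diagonal direction $e'$ is precisely the apartment node lying between $e$ and $e''$ in $H$; hence the induced spherical maps $T^a_p \to F_{\Pi(p)}$ inherit distance-preservation from $\Pi$.
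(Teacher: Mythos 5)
The paper does not actually supply a proof of this lemma --- consistent with its stated convention that all lemmas are ``quasi-theorems'' --- so there is nothing to compare line by line; judged on its own terms, your argument is correct and is surely the intended mechanism: manufacture a companion immersive segment $\psi$ at $120^{\circ}$ from $\varphi$ using a node of $F_x$ together with Ex-Seg, fill in the rhombus $P^{a,a}$ by Ex-Obt, and take the equilateral half containing $\varphi$. Two remarks. First, the step you flag as the ``main obstacle'' --- immersivity of $\Pi|_{T^a}$ --- is in fact immediate from Definition \ref{immersive}: distances in $E_x$ are computed via the inclusion $E_x\subset A_x$, not intrinsically in the subgraph, so the spherical map of the sub-enclosure $T^a_p\subset P^{a,a}_p$ at any point $p$ is just a restriction of a distance-preserving map and hence distance-preserving; no argument about geodesics staying on one side of the diagonal is needed. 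Second, two small points you elide but which do need a word: (i) the germ of segment representing $e''$ must be taken short enough to be immersive near $x$, which is guaranteed by the lemma that angular points are finite in number, and only then can Ex-Seg be iterated (choosing opposite nodes at successive endpoints, which exist by SB-loc) to reach length $a$; (ii) to know that the sector of the resulting triangle at $x$ really is the prescribed $\tau$, observe that $e$ and $e''$ are at distance $2$ in $F_x$ and the absence of loops of length $\le 4$ forces their common neighbour to be unique, namely $e'$, so the image of the rhombus's sector between $e$ and $e'$ is the unique edge $\{e,e'\}=\tau\ne\nu$. With those two observations added, your proof is complete, including the ``furthermore'' clause obtained by prescribing $\tau$ at $x$ or by running the construction from the other endpoint.
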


\begin{theorem}
\label{common-enclosure}
Suppose $F$ is a construction satisfying SB-loc, Ex-Seg, and Ex-Obt, such that $F(p)$ is connected. Suppose $x,y\in F(p)$. Then there exists
an immersive map from an enclosure $f:E\rightarrow F$ such that $x,y$ are in the image of $f(p)$.
\end{theorem}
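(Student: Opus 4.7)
The plan is to proceed by induction on the length of a piecewise-immersive path joining $x$ to $y$. Since $F(p)$ is connected and every point lies in the image of some immersive enclosure (from the construction structure of $F$), any continuous path from $x$ to $y$ is covered by finitely many such images. Refining within each enclosure by affine segments in $A$ produces a concatenation of immersive segments $\sigma_1,\ldots,\sigma_n$ from $x$ to $y$. The inductive claim is: for every $k\leq n$ there is an enclosure $E^{(k)}$ and an immersive map $g_k : E^{(k)}\to F$ whose image contains the initial subpath $\sigma_1\cup\cdots\cup\sigma_k$. The base case $k=1$ is clear, since a segment is itself an enclosure.

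For the inductive step, let $z$ be the junction point where $\sigma_k$ emerges from $E^{(k-1)}$. By SB-loc the local spherical construction $F_z$ is a spherical building of type $A_2$, i.e. a hexagonal graph, and the outgoing direction of $\sigma_k$ at $z$ is some vertex $\nu\in F_z$. Its position relative to the boundary directions of $E^{(k-1)}$ at $z$ falls into three geometric cases: (i) if $\nu$ is opposite a boundary direction of $E^{(k-1)}$ at $z$, then $\sigma_k$ is a straight continuation of a bounding edge and Ex-Seg applied to that edge immediately enlarges $E^{(k-1)}$ into an enclosure containing $\sigma_k$; (ii) if $\nu$ is at spherical distance $2$ from a boundary direction, then Ex-Obt produces an immersive parallelogram with $\sigma_k$ as one edge and a segment of $\partial E^{(k-1)}$ as the other; (iii) if $\nu$ is adjacent (distance $1$) to such a direction, then by the lemma immediately preceding the theorem Ex-Side holds, and it yields an immersive triangle with $\sigma_k$ as one edge, glued to $E^{(k-1)}$ along a common segment. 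In each case one has a second immersive piece attached to $E^{(k-1)}$ along a common boundary segment.

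The hard part is the gluing itself: one must verify that the union of $E^{(k-1)}$ with the freshly attached parallelogram or triangle along their common boundary segment is again an enclosure $E^{(k)}$ and that the combined map is still immersive. Expressing both pieces in normal form via standard coverings and taking a common refinement of the strip data shows that the union is a closed bounded intersection of finitely many root half-spaces in $A$, hence is an enclosure. Immersivity along the interior of the shared segment is automatic because the two affine maps agree there; at the corner $z$ it reduces to the spherical building structure of $F_z$, which guarantees that the sectors inherited from the two pieces assemble without introducing any new fold. Iterating the absorption step $n$ times produces a single immersive enclosure $E=E^{(n)}\to F$ whose image contains the entire path $\sigma_1\cdots\sigma_n$, hence in particular both $x$ and $y$, which is the assertion of the theorem.
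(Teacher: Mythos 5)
Your overall strategy---cover a path from $x$ to $y$ by immersive pieces and absorb them one at a time using Ex-Seg, Ex-Obt and Ex-Side---is the same in spirit as the paper's sketch, but the step you yourself identify as ``the hard part'' contains a genuine error. An enclosure is by definition a bounded \emph{intersection of root half-spaces}, hence convex. The union of $E^{(k-1)}$ with a parallelogram or triangle attached along a common boundary segment is in general not convex (think of an L-shape, or a triangle glued to a proper sub-segment of an edge of a hexagon), so it is simply not an enclosure, and no amount of refining standard coverings will make it one: the normal-form lemma describes coverings of a \emph{given} enclosure by sub-enclosures, it does not certify that a union of enclosures is an enclosure. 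Consequently your induction breaks at the first junction where the path turns. What is actually needed is to extend the immersive map over the \emph{convex hull} of the two pieces, i.e.\ to add material to the image that is not present in either piece; the axioms Ex-Obt and Ex-Side are exactly the tools for this, but invoking them requires controlling the combinatorics of successive turns. A zigzag of turns in alternating directions is manageable, whereas consecutive turns in the same direction force the convex hull to grow in a way the naive induction does not control.

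This is precisely the content of the step in the paper that your argument omits: the paper first normalizes the path into a strip of triangles sharing common edges, then uses moves based on Ex-Obt to arrange that \emph{there are no two consecutive turns in the same direction} along the strip, and only then fills the (now combinatorially convex) strip out to a single immersive enclosure using Ex-Obt and Ex-Side. To repair your proof you would need to insert an analogous normalization of the sequence $\sigma_1,\ldots,\sigma_n$ and then argue that the filling-in of the convex hull at each stage terminates and yields an immersive map on all of it; as written, your claim that ``the union is a closed bounded intersection of finitely many root half-spaces'' is false and the inductive step does not go through.
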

\begin{proof} (Sketch)\,
Choose a path from $x$ to $y$, that is to say a continuous map from $[0,1]$ to the
topological space $F(p)$. We assume a topological result saying that the path may be covered by finitely many intervals which
map into single enclosures. From this, we may assume that the path has the following form: it goes through a series of triangles
which are immersive maps
$\varphi _i: T^a\rightarrow F$, such that $\varphi _i$ and $\varphi _{i+1}$ share a common edge. Notice here that we may assume
that all the triangles have the same edge length. We have $x$ in the image of $\varphi _1$ and $y$ in the image of $\varphi _k$.

We obtain in this way a strip of triangles. We next note that there are moves using Ex-Obt which allow us to assume that
there are not consecutive turns in the same direction along an edge.

From this condition, we may then extend using Ex-Obt and Ex-Side (which follows from Ex-Seg) to turn this strip of triangles, into a
single immersive map from an enclosure.
\end{proof}

We let $A$ denote the sheaf represented by the affine space of the same notation. More precisely,
for an enclosure $E$ a map $E\rightarrow A$ consists of a factorization $E\rightarrow E'\rightarrow A$ where
$E'\subset A$ is an enclosure in its standard position, and $E\rightarrow E'$ is a folding map (that is, a map to
$\widetilde{E}'$). Thus $A$ is a direct limit of its enclosures. In particular
we know what it means to have an immersive map from $A$ to somewhere, the map should be immersive on each of the
enclosures in $A$.  If $F$ is a construction, an {\em apartment} is an immersive map $A\rightarrow F$.

\begin{proposition}
Suppose $F$ is a construction satisfying SB-loc, Ex-Obt and Ex-Seg. Then any immersive map from an enclosure
$E\rightarrow F$ extends to an apartment $A\rightarrow F$.
\end{proposition}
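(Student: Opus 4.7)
The strategy is to exhaust $A$ by a nested sequence of enclosures and extend $f$ one enclosure at a time. Write $A = \bigcup_{n \geq 0} E_n$ as an increasing union of enclosures with $E_0 \supset E$; for instance, take $E_n$ to be the regular hexagon of side $n$ centered at the origin, chosen large enough that $E \subset E_0$. Since $A$ is the filtered colimit of its enclosures and immersivity is a local condition, a compatible sequence of immersive extensions $f_n : E_n \to F$ of $f$ assembles into an immersive map $A \to F$, which is by definition an apartment.

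The problem thus reduces to the following extension lemma: if $g : E' \to F$ is immersive and $E' \subset E''$ are enclosures, then $g$ extends to an immersive $g'' : E'' \to F$. My plan is to realize this extension as a finite sequence $E' = G_0 \subset G_1 \subset \cdots \subset G_N = E''$ of elementary steps of two kinds. In type (i), $G_{k+1}$ is obtained from $G_k$ by gluing a fundamental triangle $T^a$ along a single boundary edge of $G_k$, in which case Ex-Side produces the extension --- note that Ex-Side is available, since SB-loc, Ex-Obt and Ex-Seg together yield Ex-Side by the preceding lemma. In type (ii), $G_{k+1}$ is obtained by gluing a fundamental rhombus $P^{a,a,\pm}$ along two edges meeting at a $120^{\circ}$ corner of $G_k$, in which case Ex-Obt applied to the two edge segments of $g|_{G_k}$ at that corner supplies the extension; the distance-$\geq 2$ hypothesis of Ex-Obt holds by immersivity of $g$ at the corner.

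Existence of such an elementary decomposition is a combinatorial fact about convex regions in the $SL_3$ Weyl tiling of $A = \mathbb{R}^2$. I would establish it by induction on the number of fundamental triangles in $E'' \setminus G_k$: starting from $G_0 = E'$ and growing outward, at any intermediate stage the convex sub-enclosure $G_k \subset E''$ has boundary edges and corners in $\operatorname{int}(E'')$ where one can always locate either an outward-facing fundamental triangle attached along exactly one edge, handled by a type (i) step, or an outward-facing $120^{\circ}$ corner where a fundamental rhombus fits, handled by a type (ii) step. Fusing two complementary triangles into a rhombus whenever a would-be $60^{\circ}$ re-entrant situation appears keeps $G_{k+1}$ convex, and a routine count shows the process terminates at $G_N = E''$.

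The principal obstacle I expect is verifying that the local sector choices made in each Ex-Side or Ex-Obt step assemble into a genuinely immersive map on all of $E''$. Two compatibility conditions must be checked: at interior points $x$ of a shared edge between two successively added pieces, the two half-apartments meeting at $x$ must fit together into a full apartment hexagon in the spherical building $F_{g(x)}$; and at tiling vertices $v$ that become interior to some $G_k$, the six sectors accumulating around $v$ must close up into a hexagon in $F_{g(v)}$. The hypothesis SB-loc is exactly what is needed here, because in a genuine spherical building any pair of opposite nodes lies in a unique apartment hexagon, so the two half-apartments forced by immersivity on each side of a shared edge necessarily lie in the same hexagon of $F_{g(x)}$. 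By choosing the avoidance-node $\nu$ in each invocation of Ex-Side to be the node dictated by the orientation in the Weyl tiling of $A$, one arranges that the successive elementary extensions coincide on overlaps; once this is verified, the induction goes through and the assembled $g'' : E'' \to F$ is the required immersive extension, and the exhaustion delivers the apartment $A \to F$.
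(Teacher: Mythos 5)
The paper offers no proof of this proposition at all (in keeping with its stated convention that every result is a quasi-theorem), so there is nothing of the authors' to compare against line by line; your overall strategy --- exhaust $A$ by enclosures and grow an immersive map one fundamental triangle or parallelogram at a time using Ex-Side and Ex-Obt --- is the natural one and matches the spirit of the sketch the authors give for Theorem \ref{common-enclosure}. The difficulty is that the two compatibility checks you yourself flag as the principal obstacle are not discharged, and one of them rests on a false statement. In the $SL_3$ spherical building, which is the incidence graph of a projective plane, an opposite point--line pair $(p,\ell)$ is \emph{not} contained in a unique hexagon: every pair of distinct lines through $p$ yields one. What is true is that two \emph{distinct} length-$3$ geodesics between opposite nodes close up into a hexagon (girth $\geq 6$ forces internal disjointness once the first edges differ, and equal first edges force equal geodesics). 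Hence the condition to be verified at an interior point $x$ of a shared edge is exactly that the two sectors flanking the edge on its two sides have \emph{distinct} images in $F_{g(x)}$, i.e.\ that the two pieces do not fold together there. That is the content of the proposition, not a consequence of SB-loc, and it cannot be arranged by a one-line appeal to ``the node dictated by the orientation'': Ex-Side controls the new triangle only along the edge it is attached to, not along its other two edges, which must later agree with further pieces.

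The second gap is combinatorial. A type (ii) step, filling in $P^{a,b}$ by Ex-Obt along two edges of $G_k$ meeting at $v$, fills an exterior wedge of exactly $120^{\circ}$, so at that moment the interior angle of $G_k$ at $v$ is $240^{\circ}$ and $G_k$ is not convex, contrary to your bookkeeping; and the residual $60^{\circ}$ wedges that arise as a tiling vertex is about to become interior can be filled directly by neither axiom. Even when you arrange for the last wedge at such a vertex to be a $120^{\circ}$ one, Ex-Obt only guarantees that the new parallelogram is immersive as a map from $P^{a,b}$; immersivity of the union at the now-interior vertex additionally requires that the closed walk of length $6$ so created in $F_{g(v)}$ be non-backtracking at its two junctions with the previously placed $240^{\circ}$ of sectors. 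That does follow from girth $\geq 6$ together with the distance preservation built into Definition \ref{immersive}, but it has to be argued, and it constrains the order in which the elementary steps may be performed --- compare the normalization ``no consecutive turns in the same direction'' in the authors' sketch of Theorem \ref{common-enclosure}. Exhibiting a growth order for $E''\setminus E'$ in which every such closure is forced, and in which each Ex-Side choice can be made compatible with all pieces it will eventually abut, is the real work, and it is missing.
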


\begin{corollary}
\label{common-apartment}
If $F$ is a construction satisfying SB-loc, Ex-Obt and Ex-Seg, such that $F(p)$ is connected,
 then any two points $x,y\in F(p)$ are contained in a common
apartment.
\end{corollary}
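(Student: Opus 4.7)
The plan is essentially to chain together the two immediately preceding results. Given $x,y \in F(p)$, I would first invoke Theorem \ref{common-enclosure}, whose hypotheses are exactly those available here (SB-loc, Ex-Seg, Ex-Obt, plus connectedness of $F(p)$), to obtain an immersive map $f:E \to F$ from some enclosure $E$ such that both $x$ and $y$ lie in the image of $f(p)$.

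Next I would apply the preceding Proposition, which says that under the same standing hypotheses any immersive map from an enclosure extends to an apartment. This produces an immersive map $\widetilde{f}:A \to F$ whose restriction to $E \subset A$ agrees with $f$. Since $\widetilde{f}(p)$ extends $f(p)$, the points $x$ and $y$ remain in the image of $\widetilde{f}(p)$, and $\widetilde{f}$ is by definition an apartment containing both. This gives the desired common apartment.

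The main obstacle is already encapsulated in the two results being invoked: Theorem \ref{common-enclosure} requires a nontrivial geometric argument (building a strip of triangles between $x$ and $y$, straightening it using Ex-Obt to remove consecutive turns in the same direction, then filling in via Ex-Obt and Ex-Side to produce a single immersive enclosure), and the Proposition requires verifying that immersive enclosures can be indefinitely extended by attaching further obtuse pieces without developing folds, which again uses Ex-Obt and Ex-Seg. Once these are in hand, the corollary itself is merely a formal composition, and I do not anticipate any further difficulty beyond checking that the extension genuinely contains the original image (and hence the points $x$ and $y$), which is immediate from the definition of extension.
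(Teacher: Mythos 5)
Your proposal is exactly the paper's proof: the authors also simply combine Theorem \ref{common-enclosure} (producing an immersive enclosure containing $x$ and $y$) with the preceding Proposition (extending any immersive enclosure to an apartment). Your elaboration of why the extension still contains $x$ and $y$ is a correct, if routine, filling-in of what the paper leaves implicit.
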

\begin{proof}
Combine the previous proposition with Theorem \ref{common-enclosure}.
\end{proof}

\begin{proposition}
Suppose $F$ is a construction satisfying SB-loc. Then an immersive map $E\rightarrow F$ from an enclosure, is injective.
The same is true for an apartment $A\rightarrow F$.
\end{proposition}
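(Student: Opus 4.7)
The plan is to reduce the claim to the one-dimensional case and then rule out the existence of a closed immersive segment using the CAT$(0)$ content of SB-loc. Suppose $f: E \to F$ is immersive and $f(x) = f(y) = z$ for distinct points $x, y \in E(p)$. Since $E$ is a convex polytope in the affine space $A$, the straight segment $[x, y]$ is a one-dimensional sub-enclosure $S \subset E$ of length $L = \|x - y\| > 0$. The restriction $g := f|_S : S \to F$ is again immersive: at an interior point $s \in S$, the incoming and outgoing directions along $S$ are opposite (at the maximal distance $3$) in $S_s$, and by the immersive hypothesis on $f$ their images remain opposite in $F_{g(s)}$. So it suffices to rule out immersive segments $g: [0, L] \to F$ with $g(0) = g(L)$.

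The hypothesis SB-loc says that $F_w$ is a spherical building of type $A_2$ at every $w \in F(p)$. Combinatorially, the absence of loops of length $\leq 4$ in each graph $F_w$ is the condition that the links at points of $F(p)$ are CAT$(1)$ spaces of spherical diameter $\pi$, i.e.\ that $F(p)$ has locally nonpositive Alexandrov curvature. In particular, an immersive segment is a local geodesic in the piecewise-Euclidean pseudometric on $F(p)$, rigidly determined by a starting point and starting direction. One makes this precise by developing $g$: parametrizing $g$ and proceeding step by step, one selects apartment germs at each $F_{g(s)}$ containing the germ of $g$ near $s$, and the spherical-building structure on each $F_{g(s)}$ provides the data needed to glue these apartment germs along $g$ into a single immersive map from the straight line $[0, L] \subset A$ into $F$ whose underlying point-map is $g$.

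On the developed picture, $0$ and $L$ are at Euclidean distance $L > 0$ in a model apartment, but by hypothesis they map to the same point $z$; since the development is straight in a Euclidean plane and its endpoints are distinct there, we get $g(0) \neq g(L)$, a contradiction. The apartment case $f: A \to F$ follows at once: given distinct $x, y \in A(p)$ with $f(x) = f(y)$, any enclosure $E \subset A$ containing both points has $f|_E$ immersive, and the enclosure case applies. The main obstacle is making the development step rigorous from SB-loc alone, i.e.\ without invoking Ex-Obt or Ex-Seg: concretely, one must verify that the spherical-building structure at each $F_{g(s)}$ is enough to unambiguously match up local apartment germs all along $g$, and that the resulting pasting is itself a morphism of constructions from a segment into $F$.
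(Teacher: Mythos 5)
The paper states this proposition with no proof (per its stated convention, it is a ``quasi-theorem''), so your argument stands on its own; unfortunately it has a decisive gap. SB-loc is a purely local condition, while injectivity is a global statement, and your proof never invokes any global hypothesis --- so it cannot be complete. The failure is in the last step: developing $g$ along $[0,L]$ produces a map from a straight model segment whose underlying point-map \emph{is} $g$, and the fact that $0\neq L$ in the model apartment says nothing about whether $g(0)\neq g(L)$ in $F$; that is precisely the local-to-global implication you are trying to prove. Concretely, a flat torus $\mathbb{R}^2/\Lambda$ with $\Lambda$ a lattice of affine Weyl translations satisfies SB-loc (every link is a hexagon, which is a thin spherical building of type $A_2$), yet a sufficiently large enclosure maps to it immersively and non-injectively, and a closed geodesic there develops to an honest straight segment with distinct endpoints. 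So the proposition implicitly requires $F(p)$ to be simply connected (as in Definition \ref{def-building}), and the missing idea is Cartan--Hadamard: SB-loc gives links of girth $\geq 6$, hence ${\rm CAT}(1)$ links and local nonpositive curvature of $F(p)$; simple connectivity then upgrades this to a global ${\rm CAT}(0)$ structure, in which local geodesics are unique global geodesics, and injectivity follows. You assemble the local curvature input correctly but omit the step where the global hypothesis actually does work.

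There is also a secondary flaw in the reduction to dimension one. Enclosures are intersections of root half-spaces, so a one-dimensional enclosure must lie along a root (Weyl) hyperplane; for generic $x,y\in E(p)$ the chord $[x,y]$ points in none of the three Weyl directions and is therefore \emph{not} a sub-enclosure, the paper's Definition \ref{immersive} does not apply to it, and its two directions at an interior point are interior points of edges of the link graph rather than vertices, so ``distance $3$'' is not even defined for them combinatorially. To make the reduction work you must either pass to the geodesic (${\rm CAT}(1)$) realization of the links and show the immersion condition forces the two chord directions to be antipodal there, or replace the chord by a noncritical path in the sense of the $\Omega_{pq}$ argument. Both issues are repairable, but as written the proof does not go through.
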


\begin{definition}
\label{def-building}
Say that $F$ is a {\em building} if it is a construction with $F(p)$ connected and simply connected,
satisfying SB-loc, Ex-Obt and Ex-Seg.
\end{definition}

\begin{conjecture}
\label{itsabuilding}
If $F$ is a building,
then the topological space $F(p)$ has a natural
structure of $\rr$-building modeled on the affine space $A$.
\end{conjecture}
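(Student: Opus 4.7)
The plan is to check that $F(p)$, equipped with the family of immersive maps $A \to F$ as its system of apartments, satisfies the standard axioms of an $\RR$-building modeled on $A$ in the sense of Parreau or Kleiner--Leeb.

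\textbf{Step 1: The metric.} For $x,y \in F(p)$, Corollary \ref{common-apartment} gives an apartment $\alpha : A \to F$ with $x,y \in \alpha(A(p))$; set $d(x,y) := |\alpha^{-1}(x) - \alpha^{-1}(y)|_A$ with the Weyl-invariant Euclidean norm on $A$. The preimages make sense by the injectivity proposition for immersive maps from apartments. I would then show that $d$ does not depend on $\alpha$: given two apartments $\alpha,\beta$ through $x,y$, the intersection $\alpha(A(p)) \cap \beta(A(p))$ contains a closed enclosure joining $x$ and $y$ on which the transition $\alpha^{-1}\circ\beta$ is the restriction of an affine Weyl group element. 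The argument mirrors the proof of Theorem \ref{common-enclosure}: push a path from $x$ to $y$ into a strip of triangles lying in both apartments, use Ex-Obt to straighten it into a single common enclosure, and use immersivity together with SB-loc to force the two apartment charts to agree up to an affine Weyl map on this enclosure.

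\textbf{Step 2: The apartment axioms.} The atlas is tautologically closed under precomposition by affine Weyl automorphisms of $A$. Axiom A2 (any two points lie in a common apartment) is precisely Corollary \ref{common-apartment}. Axiom A3 (for any two apartments there is a Weyl transition fixing their intersection pointwise) follows from Step 1 applied to arbitrary pairs of points of the intersection, together with a gluing argument: the locally defined transitions on enclosures agree on overlaps by the injectivity proposition, hence paste to a single element of the affine Weyl group carrying one chart to the other on $\alpha^{-1}(\alpha(A)\cap\beta(A))$.

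\textbf{Step 3: Nonpositive curvature.} By SB-loc and the lemmas preceding Definition \ref{def-building}, the spherical construction $F_x$ is a spherical building for every $x$; spherical buildings are CAT$(1)$, so the Euclidean cone on $F_x$ is CAT$(0)$. The local structure of $F(p)$ at $x$ is modeled on a neighborhood in this cone (by Ex-Seg and Ex-Obt, every germ of immersive map from a sector at $x$ extends to an immersive map from a small enclosure), hence $d$ is locally CAT$(0)$ at each point. Since $F(p)$ is connected and simply connected, the Cartan--Hadamard theorem for locally CAT$(0)$ length spaces (Ballmann, Bridson--Haefliger) upgrades local CAT$(0)$ to global CAT$(0)$. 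Together with Step~2 this exhibits $F(p)$ as an $\RR$-building modeled on $A$.

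\textbf{Main obstacle.} The crux is the rigidity statement in Step~1: two apartments through the same pair of points must agree, up to an affine Weyl map, on an enclosure joining those points. The basic moves Ex-Obt and Ex-Seg together with injectivity of immersive maps are exactly the right tools, but the argument requires a careful induction along a strip of triangles, verifying at each step that SB-loc forces the two charts to choose the same folding. Once this rigidity is in hand, the rest of the axioms drop out formally, and the CAT$(0)$ conclusion in Step~3 follows by a standard local-to-global principle.
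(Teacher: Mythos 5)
The paper offers no proof of this statement: under its stated conventions it is explicitly a \emph{conjecture}, i.e.\ an open problem ``needing considerably more work,'' so your proposal cannot be checked against an argument in the text and must stand on its own. As a road map it follows the expected strategy (metric via common apartments, apartment axioms, local-to-global CAT$(0)$), but the two places where the real content of the conjecture lies are exactly the places you defer.

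The decisive gap is in Step 1. You assert that for two apartments $\alpha,\beta$ through $x$ and $y$ the intersection $\alpha(A(p))\cap\beta(A(p))$ contains an enclosure joining $x$ to $y$, and propose to see this by producing a strip of triangles ``lying in both apartments.'' Nothing established in the paper produces such a strip: the proof of Theorem \ref{common-enclosure} builds a strip of triangles in $F$ with no control over which apartments its triangles land in, and the standard tool that forces two apartments to overlap along a convex set containing the segment from $x$ to $y$ --- the retraction onto an apartment centered at a chamber or at a germ of sector --- is only available once the building structure is already known. Without this, $d$ is not known to be well defined; moreover the triangle inequality, which compares three different apartments for three points, is nowhere addressed. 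Step 3 has a second, independent gap: identifying a neighborhood of $x$ in $F(p)$ with a neighborhood of the cone point in the Euclidean cone over $F_x$ presupposes that $F(p)$ is Hausdorff and locally a reasonable length space, which in this paper is itself only a conjecture about constructions, and it also requires that every point near $x$ lies on a unique germ of segment from $x$ --- Ex-Seg and Ex-Obt give existence of extensions, not this uniqueness. So the proposal is a plausible outline, but the rigidity of apartment overlaps and the local cone structure, which are the actual substance of the conjecture, remain unproven.
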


\subsection{Pre-buildings}

\begin{definition}
\label{pb-def}
A {\em pre-building} is a construction $F$ which satisfies SPB-loc such that $F(p)$ is simply connected.
\end{definition}

Suppose $F$ is a pre-building. Recall that a segment $s : S\rightarrow F$ is straight
if it is immersive, that is for any $y\in S(p)$ which is not an endpoint, the two directions of $S_y$
map to two vertices of $F_{s(y)}$ which are separated by three edges.

\begin{definition}
\label{pb-std}
A map $V^{a,b}\rightarrow F$ to a pre-building is {\em standard} if the two edges are straight and
their two directions at the origin are not the same.
\end{definition}

We would now like to investigate the process of adding a parallelogram $P^{a,b}$ to
a pre-building $F$ along a standard inclusion $V^{a,b}\rightarrow F$ to get a new
pre-building. The problem is that a similar parallelogram, or a part of it, might already be there.
For example if we add $P^{a,b}$ to itself by taking a pushout along $V^{a,b}$ then we generate
a fourfold point at the origin.

Thus the need for a process which we call {\em folding in}, of joining up the new $P^{a,b}$ with
any part of it that might have already been there. This will be used in the small-object argument
below as well as in our general modification procedure later.

Suppose $F$ is a pre-building and $d:V^{a,b}\rightarrow F$ is a standard inclusion. Define $R_d\subset
P^{a,b}$ to be the union of all of the $P^{c,d}\subset P^{a,b}$ based at the origin, such that
there exists a map on the bottom making the following diagram commute:
$$
\begin{array}{ccc}
V^{c,d} & \hookrightarrow & V^{a,b} \\
\downarrow & & \downarrow \\
P^{c,d}& \rightarrow & F.
\end{array}
$$
One can show that the map, if it exists, is unique. We obtain a subsheaf $R\subset P^{a,b}$
and there is a map $R\rightarrow F$ combining all of the previously mentioned maps. The {\em folding-in}
of $P^{a,b}$ along $d$ is the pushout
$$
G:= F\cup ^{R}P^{a,b}.
$$
The relation $R$ is designed so that
$G$ again satisfies SPB-loc.
Since $R$ was defined as a very general union of things,
there remains open the somewhat subtle technical issue, which we haven't treated,
of showing that $G$ is a construction.

The folding-in $G$ accepts a map $F\cup ^{V^{a,b}}:P^{a,b}\rightarrow G$ and we have the following
universal property:

\begin{lemma}
\label{foldingin}
The folding-in $G$ is a pre-building.
Suppose $B$ is a pre-building and $f:F\rightarrow B$ is a map such that the image of $d$ is
again a standard inclusion into $B$ (as will be the case for example if $f$ is non-folding).
Suppose that $V^{a,b}\rightarrow B$ completes to a map $P^{a,b}\rightarrow B$. Then these
factor through a unique map $G\rightarrow B$.
\end{lemma}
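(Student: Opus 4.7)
The plan is to proceed in two stages: first verify that $G$ is indeed a pre-building, i.e.\ that $G(p)$ is simply connected and that $G$ satisfies SPB-loc at every point, and then extract the universal property from the pushout structure. For SPB-loc at a point $x \in G(p)$, I would split into cases according to where $x$ lies. If $x$ is in the interior of the image of $P^{a,b}\setminus R$, then $G_x$ coincides with $P^{a,b}_x$, which is part of a standard apartment and hence a spherical pre-building. If $x$ is in $F$ away from the image of $R \to F$, then $G_x = F_x$ is a spherical pre-building by hypothesis. The interesting case is when $x$ lies in the image of $R$: here $G_x$ is the pushout of graphs $F_x \leftarrow R_x \to P^{a,b}_x$. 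A loop of length $\leq 4$ in $G_x$ would arise from identifying nodes or edges of $F_x$ with those coming from $P^{a,b}_x$ not already identified in $R_x$; but any such coincidence, propagated outward using SPB-loc in $F$, would produce a strictly larger sub-parallelogram $P^{c',d'}\subset P^{a,b}$ fitting into the defining diagram of $R$, contradicting the maximality built into the definition of $R$. For simple connectedness, $R(p)$ is a union of parallelograms sharing the origin, hence star-shaped and simply connected, and $P^{a,b}(p)$ is contractible; van Kampen applied to the topological pushout then gives $\pi_1(G(p)) = \pi_1(F(p)) = 1$.

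For the universal property, since $G = F \cup^R P^{a,b}$ is a pushout of sheaves, a map $G \to B$ is the same datum as a pair of maps $F \to B$ and $P^{a,b}\to B$ with matching restrictions to $R$. Both maps are supplied: $f$ itself, and the extension $P^{a,b}\to B$ postulated in the hypothesis. What remains is to check compatibility on $R$. Fix any sub-parallelogram $P^{c,d}\subset R$ with its chosen lift $\alpha_{c,d}: P^{c,d}\to F$. Then $f \circ \alpha_{c,d}$ and the restriction of $P^{a,b}\to B$ to $P^{c,d}$ are two maps $P^{c,d}\to B$ that agree on the bounding wedge $V^{c,d}$, which is itself a standard inclusion in $B$ because it sits inside the standard inclusion $d$. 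Any two extensions of a standard wedge to a parallelogram inside a pre-building must agree, so the two maps coincide on $P^{c,d}$; letting $c,d$ vary yields agreement throughout $R$. Uniqueness of the induced map $G \to B$ is then automatic from the pushout property.

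The main obstacle is the rigidity statement invoked twice above: that a parallelogram in a pre-building is determined by its two bounding edges when those edges form a standard inclusion. I would prove this locally-to-globally: at the origin the $2$-sector is pinned down by the two $1$-sectors via the no-short-loops axiom in the spherical pre-building at the corner, and one then propagates across the interior of the parallelogram by repeatedly using uniqueness of the opposite vertex in a spherical pre-building at each intermediate lattice point, invoking SPB-loc at that point. A secondary subtlety, which the paper itself flags as not yet treated, is showing that the pushout $G$ really is finitely related, hence a construction in the technical sense; here I would produce the needed finite presentation of each fiber product $E \times_G E'$ by combining the standard coverings underlying the folding maps $R \to F$ and $R \to P^{a,b}$ with the previous lemma that subsheaves of products of representables are finitely related.
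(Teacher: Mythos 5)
The paper states this lemma without proof --- consistent with its declared convention that all lemmas are ``quasi-theorems'' --- and it even flags separately that showing $G$ is a construction ``remains open''. So there is no proof of record to compare against; your proposal has to be judged on its own terms. Its overall architecture is sound: SPB-loc by cases, simple connectedness via a deformation retraction of $P^{a,b}(p)$ onto the star-shaped $R(p)$ followed by van Kampen, and the universal property from the pushout description plus a rigidity statement for parallelograms over standard wedges. The reduction of the compatibility check on $R$ to rigidity in $B$ is exactly right, as is the observation that $V^{c,d}\rightarrow B$ inherits standardness from $f\circ d$.

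Two steps need more care than you give them. First, in the SPB-loc argument at a boundary point $x$ of $R$: a loop of length $\leq 4$ in the pushout graph $G_x$ only gives you a \emph{germ}-level coincidence --- a path of length $\leq 2$ in $F_{f(x)}$ shadowing sectors of $P^{a,b}_x$ not in $R_x$. To contradict the definition of $R$ you must integrate these sector germs into an honest map $P^{c',d'}\rightarrow F$ from a nondegenerate sub-parallelogram compatible with $d$ on the wedge; germs do not automatically assemble into maps from enclosures, and here one needs the sheaf condition for the covering of a small rhombus by its two triangular halves (possible because the short diagonal is a Weyl direction). Also, ``maximality'' is not quite the right mechanism: $R$ is the union of \emph{all} extendable sub-parallelograms, so the contradiction is that the offending sector would already lie in $R$ and hence already be identified, killing the short loop. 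Second, the rigidity lemma's propagation step is more delicate than ``uniqueness of the opposite vertex at each intermediate lattice point'': in a spherical pre-building two opposite vertices are joined by many paths of length $3$, so you cannot pin down the map at an interior point from the diagonal directions alone. The induction has to creep inward from the two straight boundary edges of $V^{c,d}$, at each stage determining one new sector whose two bounding $1$-sector germs are already determined (whereupon no-$2$-cycles forces the edge and no-$4$-cycles forces the next vertex). With these repairs the argument goes through at the paper's level of rigor.
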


We also conjecture a versality property for maps to buildings: if $F\rightarrow B$ is any map
to a building then there exists an extension to $G\rightarrow B$, however this extension
might not be unique. Because of the non-uniqueness on the interior pieces making up $R$,
this conjecture requires a study of convergence issues and the statement might need to be modified.

\subsection{The small-object argument}
\label{soa}

The conditions Ex-Seg, Ex-Obt and Ex-Side are extension conditions of the following form: we have a certain arrangement
$R$ consisting of some points or segments, and a map $R\rightarrow E$ to an enclosure putting $R$ on the boundary of $E$.
Then the conditions state that for any immersive map $R\rightarrow F$ there exists an extension to an immersive map $E\rightarrow F$.

These conditions may be ensured using the small object argument.
Given $F$ we may define ${\rm Ex}(F)$ to be
the construction obtained by a successive infinite family of pushouts along the inclusions $R\rightarrow E$,
using the folding-in process described in the previous subsection for ${\rm Ex-Obt}$.

\begin{theorem}
\label{small-object}
If $F$ is a pre-building, and if we form $F':={\rm Ex}(F)$ by iterating up to the first infinite
ordinal $\omega$, then $F'$ is a building.
\end{theorem}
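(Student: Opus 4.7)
The plan is to set up $F' = \mathrm{Ex}(F)$ as a transfinite (here countable) colimit $F_0 \to F_1 \to F_2 \to \cdots$ of pre-buildings, where $F_{n+1}$ is obtained from $F_n$ by adjoining one solution to every currently outstanding instance of the extension problems Ex-Seg and Ex-Obt. Concretely, for each immersive map $\varphi: S^{t,\pm} \to F_n$ together with a choice of opposite element $\nu$ at the far endpoint, and for each standard inclusion $V^{a,b} \to F_n$, form the corresponding pushout: for Ex-Seg this is the ordinary pushout along $S^{t,\pm} \hookrightarrow S^{t',\pm}$; for Ex-Obt it is the folding-in $F_n \cup^{R} P^{a,b,\pm}$ of Lemma~\ref{foldingin}. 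One performs these pushouts simultaneously over all instances, which can be encoded as a single (possibly infinite) pushout; since each instance is added only once, the operation is well defined.

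First I would check the inductive step: that each $F_{n+1}$ is again a pre-building. The folding-in lemma already guarantees that Ex-Obt pushouts preserve SPB-loc, and Ex-Seg pushouts along a boundary segment trivially do so. For simple connectedness of $F_{n+1}(p)$, observe that each pushout attaches a contractible piece ($S^{t',\pm}$ or $P^{a,b,\pm}$) along a connected subsheaf (a segment, or the union $R$ appearing in the folding-in, which is itself a union of sub-parallelograms sharing the origin-vertex and thus connected); by van Kampen this does not create new loops. Taking $F' = \mathrm{colim}_n F_n$, simple connectedness passes to the filtered colimit.

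Next I would verify that $F'$ satisfies Ex-Seg and Ex-Obt. This is the core use of the "small object" mechanism: any map from a bounded enclosure $R$ (a segment, or a pair $V^{a,b}$) into $F'$ is finitely generated on a finite presentation, hence factors through some $F_n$ because the objects $R$ are compact in the site-theoretic sense (a map to a filtered colimit of sheaves from a finitely generated sheaf factors through a finite stage). But then by construction $F_{n+1}$ already contains the required extension, and so does $F'$. Hence Ex-Seg and Ex-Obt hold in $F'$. From Ex-Seg and Ex-Obt we get Ex-Side by the penultimate lemma, and from SPB-loc together with Ex-Obt and Ex-Side we get SB-loc by the preceding lemma. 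Together with simple connectedness this matches Definition~\ref{def-building}, so $F'$ is a building.

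The main obstacle I anticipate is the compactness/smallness step for Ex-Obt under folding-in: one must ensure that when a standard inclusion $V^{a,b} \to F'$ is presented at stage $n$, the folding-in $R \subset P^{a,b}$ that would be used at stage $n+1$ is \emph{the same} as the one computed later inside $F'$, i.e.\ that no subsequently adjoined cells create new agreement diagrams $V^{c,d}\to V^{a,b} \to F'$ that were not visible at stage $n$. This requires showing that the agreement relation $R_d$ is monotone and stable under the transition maps $F_n \to F_{n+1}$; equivalently, that no $P^{c,d}\to F_m$ with $m>n$ extends a diagram which did not already extend to $F_n$. This stability is essentially a statement that immersive maps from enclosures are determined by their restriction to segments through the origin, which should follow from the characterization of morphisms of enclosures via their affine piece-wise structure combined with the injectivity statement for immersive maps (the proposition just before Definition~\ref{def-building}). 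Granting this stability, the colimit argument closes.
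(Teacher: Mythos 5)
Your overall strategy --- build $F'={\rm Ex}(F)$ as a countable colimit of pushouts and folding-ins, use compactness of enclosures to factor any instance of an extension problem through a finite stage, check that each stage remains a pre-building, and then assemble the building axioms from the lemmas of Section \ref{sec-presheaf} --- is the same as the paper's, whose own proof is only a sketch; you supply more detail, including the stability-of-$R$ issue that the paper acknowledges only with ``this turns out to be good enough to get it in general.''

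There is, however, a circularity in your final deduction. You enforce only Ex-Seg and Ex-Obt by pushouts and then argue: ``from Ex-Seg and Ex-Obt we get Ex-Side by the [lemma], and from SPB-loc together with Ex-Obt and Ex-Side we get SB-loc.'' But the lemma that produces Ex-Side has SB-loc as a hypothesis (``Suppose $F$ is a construction satisfying SB-loc, Ex-Obt and Ex-Seg. Then it satisfies Ex-Side''), while SB-loc is precisely what you are trying to derive from Ex-Side; the two lemmas cannot be chained in that order. The paper avoids this by treating Ex-Side as a third extension condition of the same shape (attach an immersed triangle $T^a$ along a segment with a prescribed edge of the spherical construction avoided) and enforcing it \emph{directly} in the small-object iteration, so that at the end only the one-way implication SPB-loc $+$ Ex-Obt $+$ Ex-Side $\Rightarrow$ SB-loc is invoked. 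A related point: realizing Ex-Seg as ``the ordinary pushout along $S^{t,\pm}\hookrightarrow S^{t',\pm}$'' would create interior points of the newly attached segment whose local spherical constructions consist of two isolated nodes, violating SPB-loc (the graph must be connected and every node must lie on an edge); the segment extension must be carried along together with two-dimensional material, which is a further reason the triangle attachments of Ex-Side cannot be dispensed with in the iteration.
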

\begin{proof} (Sketch) We first note that
$F'$ is a construction, and $F'(p)$ is simply connected. Also  $F'$ again satisfies SBP-loc.
By the small object argument,
 $F'$ satisfies the extension properties Ex-Seg, Ex-Obt and Ex-Side.
For Ex-Obt, the folding-in process only adds parallelograms along
standard inclusions, but this turns out to be good enough to get it in general.
Hence, $F'$ satisfies SB-loc, so it is a building. In particular any two points of $F'(p)$ are contained in a common apartment.
\end{proof}

Conjecture \ref{itsabuilding} then says that $F'(p)$ has a natural structure of $\rr$-building.

\begin{conjecture}
Suppose $F$ is a pre-building, and let $F'$ be the building obtained by the small-object argument.
Then it is versal: for any other map to a building $F\rightarrow B$ there exists a factorization
through $F'\rightarrow B$.
\end{conjecture}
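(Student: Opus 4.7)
The plan is to construct the factorization $F'\to B$ by transfinite induction along the small-object procedure of Theorem \ref{small-object}. Write $F'=\mathrm{colim}_{\alpha\leq\omega}F_\alpha$ with $F_0=F$, each successor $F_{\alpha+1}$ being obtained from $F_\alpha$ by one elementary extension: either a pushout along an inclusion $R\hookrightarrow E$ of a boundary configuration into an enclosure (for Ex-Seg or Ex-Side), or a folding-in of a parallelogram $P^{a,b}$ along a standard inclusion $d:V^{a,b}\to F_\alpha$ (for Ex-Obt). Starting from the given $f_0:F\to B$, the goal is to produce compatible maps $f_\alpha:F_\alpha\to B$ and then pass to the limit at stage $\omega$.

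The pushout steps are the easy case. For an extension $F_{\alpha+1}=F_\alpha\cup^R E$ of type Ex-Seg or Ex-Side, the composite $R\to F_\alpha\to B$ satisfies the hypotheses of the corresponding extension property in $B$, which holds because $B$ is a building in the sense of Definition \ref{def-building}. Choosing any extension $E\to B$ and invoking the universal property of the pushout yields the desired $f_{\alpha+1}$. Only existence is needed, so the choices do not obstruct the induction.

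The folding-in step is where the real work lies. Lemma \ref{foldingin} provides exactly what is needed, provided two conditions hold for the image in $B$ of the standard inclusion $d$: (i) its image $V^{a,b}\to B$ is again a standard inclusion, and (ii) it completes to a map $P^{a,b}\to B$. Condition (ii) follows from Ex-Obt applied to $B$ as soon as (i) holds. The obstacle is that $f_\alpha$ is not assumed to preserve standardness: a straight segment or a pair of distinct directions in $F_\alpha$ could be folded by $f_\alpha$ into non-standard data in $B$. The plan is to enumerate the possible degenerations — full collapse of one edge, coincidence of the two directions at the origin, partial overlap along a subparallelogram — and in each case produce the extension either directly (when the image is already standard) or by identifying the new parallelogram with pre-existing structure in $B$, enlarging the relation $R\subset P^{a,b}$ on the $F_\alpha$-side if necessary so that the degenerate image fits through the folding-in universal property. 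The convexity of apartments provided by Corollary \ref{common-apartment} together with the injectivity of immersive maps in $B$ should reduce each degenerate case to a standard application of Ex-Obt in $B$.

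The limit step at $\omega$ is formal: the compatible $f_\alpha$ assemble into a map $F'\to B$ by the universal property of the colimit in the category of constructions. Uniqueness is not asserted and cannot hold, matching the non-uniqueness flagged after Lemma \ref{foldingin}. The main obstacle, as the authors themselves point out, is the analysis of folding-in under a possibly non-immersive $f_\alpha$: the hard part is controlling the combinatorics of the degeneration uniformly across the transfinite sequence of extension steps, and the need for this control is presumably why the statement might require modification to be made into a theorem.
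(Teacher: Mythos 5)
There is a genuine gap, and it is worth being clear about its status: the statement you are proving is labeled a \emph{conjecture} in the paper, and under the paper's stated convention that means the authors regard it as an open problem, not a quasi-theorem with a proof in hand. The paper's own discussion immediately after the statement locates the difficulty exactly where your plan becomes vague: ``the versality property for the folding-in construction is not easy to see because of the possibly infinite nature of the relations $R$ coupled with non-uniqueness of extensions to $P^{a,b}$ for non-standard maps $V^{a,b}\rightarrow B$.'' Your proposal correctly isolates the folding-in step as the hard case, but the sentence ``enumerate the possible degenerations \dots\ should reduce each degenerate case to a standard application of Ex-Obt in $B$'' is precisely the content of the conjecture, not a proof of it. In particular you never engage with the fact that $R$ is defined as an unrestricted (possibly infinite) union of subparallelograms $P^{c,d}$ admitting maps to $F_\alpha$; showing that a given $f_\alpha$ respects \emph{all} of these identifications simultaneously is the convergence issue the authors flag, and no finite case analysis of ``degenerations at the origin'' addresses it.

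There is also a structural problem with your proposed repair. You suggest ``enlarging the relation $R\subset P^{a,b}$ on the $F_\alpha$-side if necessary so that the degenerate image fits through the folding-in universal property.'' But $R$, and hence $F_{\alpha+1}$ and ultimately $F'$, must be constructed from $F$ alone: the conjecture asserts that a single $F'$ is versal for \emph{every} map $F\rightarrow B$. If you enlarge $R$ according to how a particular $f_\alpha$ degenerates in a particular $B$, you have changed the object you are building, and the resulting $F'$ depends on $B$ --- which is circular. (The authors' remark that ``the small-object construction may need to be modified'' points at a uniform, target-independent modification, which is a different and harder thing.) A smaller issue: even your ``easy'' pushout steps are not automatic, since Ex-Seg and Ex-Side in $B$ only extend \emph{immersive} data, and the composite $R\rightarrow F_\alpha\rightarrow B$ need not be immersive when $f_\alpha$ folds; one would need a separate argument producing folding extensions of non-immersive boundary data. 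The transfinite bookkeeping and the limit step are fine, but the core of the conjecture remains open in your write-up.
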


The difficulty is that we needed to use the folding-in construction to construct $F'$,
and the versality property for the folding-in construction is not easy to see because of
the possibly  infinite nature of the relations $R$ coupled with non-uniqueness of extensions
to $P^{a,b}$ for non-standard maps $V^{a,b}\rightarrow B$. The small-object construction may
need to be modified in order to get to this versality conjecture.

\section{An initial construction}
\label{sec-initial}

The next step is to to relate constructions to $\phi$-harmonic maps. Consider a Riemann surface $X$ together
with a spectral curve, defining a multivalued differential $\phi$. Recall that locally on $X^{\ast}$,
$\phi$ may be written as
$(\phi _1,\ldots , \phi _n)$ with $\phi _i$ holomorphic, and $\sum \phi _i=0$. In the present paper we consider
spectral curves for $SL_3$ so $\phi = (\phi _1,\phi _2, \phi _3)$. The {\em foliations} are defined by $\Re \phi _{ij}=0$.
In our $SL_3$ case there are three foliation lines going through each point of $X^{\ast}$.

For any point $x\in X^{\ast}$ there exists a neighborhood $U_x$, with the property \cite{KNPS} that
$U_x$ has to map to a single apartment under any $\phi$-harmonic map.
Thus locally, a $\phi$-harmonic
map to a building factors through the map
$$
h^x:U_x\rightarrow A
$$
to the standard apartment given by integrating the forms $\Re \phi _i$ with basepoint $x$.
The foliation lines defined by $\Re \phi _{ij}=0$ are just the preimages in $U_x$ of the
reflection hyperplanes of $A$. The preimage of a small enclosure near the origin in $A$
will therefore be a domain in $U_x$ whose boundary is composed of foliation lines for the
three foliations. We may shrink $U_x$ so that its closure is itself such a domain.

\subsection{The $\Omega _{pq}$ argument}

Recall briefly the proof of \cite{KNPS}
showing existence of such a neighborhood $U_x$. A path $\gamma : [0,1]\rightarrow
X$ is {\em noncritical} if the differentials $\gamma ^{\ast}(\Re \phi _i)$ remain in the same
order all along the path. For any $\phi$-harmonic
map to a building $h:X\rightarrow B$, the image of a noncritical
path has to be contained in a single apartment. One shows this using the property of a building that says
two opposite sectors based at any point are contained in a single apartment.

Suppose $p,q\in X$ are joined by some noncritical path. Let $\Omega _{pq}$ denote the subset swept out
by all of these paths, in other words it is the set of all points
$y\in X$ such that there exists a noncritical path $\gamma$ with $\gamma (0)=p$, $\gamma (1)=q$
and $\gamma (s)=y$ for some $s\in [0,1]$. We claim that $\Omega _{pq}$ maps into a single apartment.
Suppose $y,y'\in \Omega _{pq}$ and let $\gamma$ and $\gamma '$ denote the corresponding paths.
We have apartments $A,A'\subset B$ such that $h\circ \gamma$ goes into $A$ and $h\circ \gamma '$ goes
into $A'$. Now $A\cap A'$ is a Finsler-convex subset in either of the two apartments,
and it contains $p$ and $q$. In particular it contains the parallelogram with opposite
endpoints $p$ and $q$.  It follows that it contains the images of the two paths, so
$h(y)$ and $h(y')$ are both in $A$. Letting $y'$ vary we get that $h(\Omega _{pq})\subset A$.

The $\phi _i$ admit a uniform determination over
$\Omega _{pq}$, and
the map $\Omega _{pq}\rightarrow A$ is determined just by integrating the real one-forms $\Re \phi _i$.

Now if $x\in X^{\ast}$, we may find nearby points $p,q\in X$ such that $x$ is
in the interior of $\Omega _{pq}$. This is easy to see away from the caustic lines. The caustic
lines are transverse to all of the differentials so if $x$ lies on
a caustic one can choose $p$ and $q$ on the
caustic itself, as will show up in our pictures later. This still works
at an intersection of caustics too.

Our neighborhood $U_x$ is now chosen to be any neighborhood of $x$ contained in some $\Omega _{pq}$.
This construction is uniform, independent of the harmonic map $h$.

\subsection{Caustics}
A point $x\in X^{\ast}$ is {\em on a caustic} if the three foliation lines are tangent. This is equivalent to saying that the
three points $\phi _i(x)\in T^{\ast}X_x$ are aligned, i.e. they are on a single real segment. The {\em caustics} are the
curves of points in $X$ satisfying this condition. We include also the branch points in the caustics.

There is a single caustic
coming out of each ordinary branch point.

The caustics play a fundamental role in the geometry of harmonic maps to buildings, specially in the $SL_3$ case.

Let $C\subset X$ denote the union of the caustic curves. Then, for any
$x\in X-C$ the map $h^x:U_x\rightarrow A$ is etale at $x$.
Hence, if $h:X\rightarrow B$ is a $\phi$-harmonic map
to a building, $h$ is etale onto the local apartments outside of $C$.
 In other words, the
local integrals of any two of the differentials provide local coordinate systems on $X-C$.
On the other hand, $h$ folds $X$ along $C$.

We may also view this as determining a flat Riemannian metric on $X-C$, pulled back from the standard Weyl-invariant metric
by the local maps $h^x$ to the standard apartment obtained by integrating $\Re \phi _i$. The metric has a distributional
curvature concentrated along $C$. From the pictures it seems that the curvature is everywhere negative,
and from our process we shall see that the total amount of curvature along a single caustic joining two branch points
gives an excess angle of $120^{\circ}$.

\subsection{Non-caustic points}
\label{noncaustic}
If $x\in X-C$ is a non-caustic point, then we may assume that the neighborhood $U_x$ maps isomorphically
to the interior of an enclosure in the standard apartment. The enclosure $E_x$
could be chosen as a standard hexagon or perhaps
a standard parallelogram for example.

We have chosen $U_x$ such that for any harmonic $\phi$-map to a building $h:X\rightarrow B$,
the map $U_x\rightarrow B$ factors through an apartment $A\subset B$
via the map $h^x:U_x\rightarrow A$ given by integrating the $\Re \phi _i$. The map
$h^x$ factors through an affine (non-folding) map $E_x\rightarrow A$. Altogether we get a factorization
\begin{equation}
\label{thefact}
U_x\rightarrow E_x \rightarrow B
\end{equation}
and the map $E_x\rightarrow B$ doesn't fold along any edges passing through $x$.

When $x$ is a branch point or on a caustic, we can still get a local factorization of the form
\eqref{thefact} through a construction $E_x$, as will be discussed next.

\subsection{Smooth points of caustics}
Suppose $x\in C$ is a point in the smooth locus of a caustic curve. Then the local integration map folds
every neighborhood of $x$ in two along $C$. The image of $U_x$ by $h^x$ is folded along $C$.
By intersecting with a smaller enclosure containing the image of $x$, we may assume that $U_x$ has the property that
there is an enclosure $E_x$ with $h^x: U_x\rightarrow E_x^{\circ}$ being a proper $2$ to $1$ covering folded along $C$.
We may also assume that $E_x$ is the convex hull of the closed $h^x(\overline{U}_x)$.

For the generic situation, there are two other types of points that need to be considered: the intersections of caustics, and
the branch points.

\subsection{Branch points}
For the branch points, locally two of the differentials say $\phi _1$ and $\phi _2$ come together, and the third one could
be considered as independent. Therefore, a $\phi$-harmonic map looks locally like the projection to the tree of leaves of
a quadratic differential, crossed with a real segment. It means that we are forced to consider a singular construction
rather than an enclosure. This singular construction still denoted $E_x$
may be taken for example as the union of three half-hexagons joined along
their diameters.

\begin{minipage}[h]{\textwidth}
\centering
\includegraphics[width=0.65\textwidth]{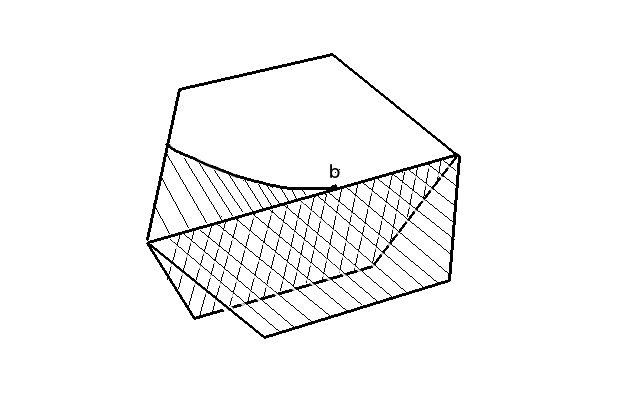}
\captionof{figure}{Three half-hexagons}
\label{branch}
\end{minipage}

\bigskip

If $x=b$ is a branch point then we may choose the neighborhood $U_x$ together with a map $h^x : \overline{U}_x\rightarrow
E_x$ such that $E_x$ is the convex hull of the image. The image of $\overline{U}_x$, shaded in above,
looks locally like the one that we saw in the BNR example
\cite{KNPS}.

\subsection{Crossing of caustics}

The case where $x$ is a crossing point of two caustics is new, not appearing in the BNR example. We have therefore looked fairly closely
at this situation in one of the next basic examples. A spectral network with the two crossing caustics will be
shown later as Figure \ref{a2sn} of Section \ref{sec-a2}.

The $\Omega _{pq}$ argument works also here, so we have a neighborhood $U_x$ of $x$ which has to map to a single
apartment in any $\phi$-harmonic map to a building. The two caustics divide $U_x$ into
four sectors. The map $h^x: U_x \rightarrow A$ is $1$ to $1$ in the interior of two of the opposing sectors; it is $3$ to $1$
in the interior of the other two opposing sectors; it is $2$ to $1$ along the caustics and $1$ to $1$ at $x$.
This is emphasized by including the images of two circles in the picture shown in Figure \ref{hexagon}.

We choose $\overline{U}_x$ as a hexagonal shaped region shown in Figure \ref{hexagon} on the left. The
image $E_x$ in a standard apartment is a hexagon-shaped enclosure, shown on the right in  Figure \ref{hexagon}.
As said above, the map is $3:1$ over the thin middle regions between the two caustics.
The map $h_x: \overline{U}_x \rightarrow E_x$ is proper and
$\overline{U}_x$ is the inverse image of $E_x$.

\begin{figure}
\centering
\includegraphics[width=1.0\textwidth]{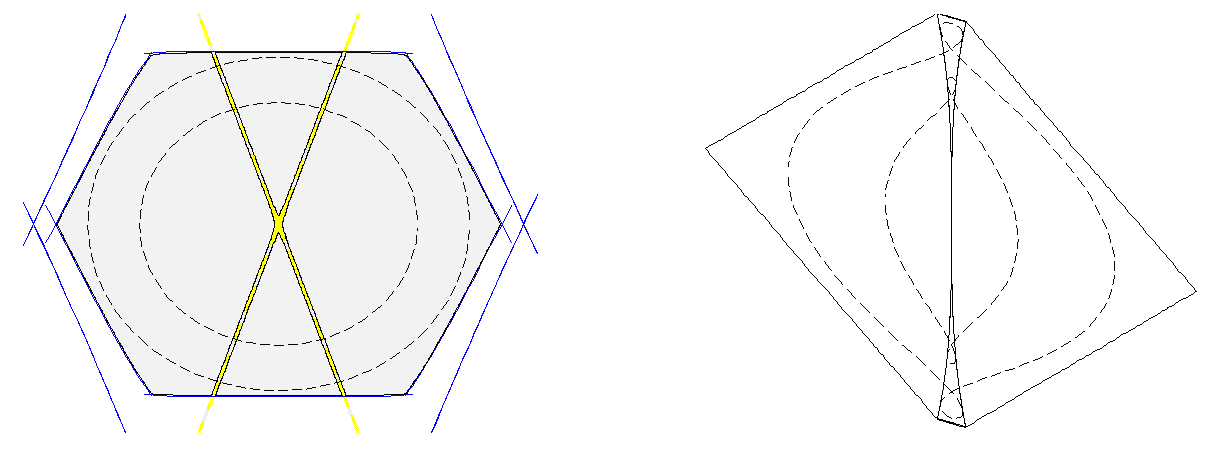}
\caption{A neighborhood of the crossing}
\label{hexagon}
\end{figure}

\subsection{The initial construction}

We now put together the above neighborhoods to get a good covering of $X$.

\begin{theorem}
There exists a finite covering of $X$ by open sets $U_i$, and constructions $E_i$ as considered above
(either an enclosure or the union of three half-hexagons),
together with maps $h_i : \overline{U_i} \rightarrow E_i$ such that for any harmonic $\phi$-map
to a building $h: X\rightarrow B$, there is an isometric embedding $E_i\subset B$ such that $h$
factors through $h_i$ and indeed $\overline{U}_i$ is a connected component of $h^{-1}(E_i)$.
\end{theorem}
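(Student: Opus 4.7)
The plan is to assemble the required covering point by point using the four local models already described (non-caustic points, smooth caustic points, branch points, and caustic crossings), then extract a finite subcover. For each $x\in X$ the $\Omega_{pq}$ argument supplies a neighborhood $U_x$ together with a construction $E_x$ and a map $h^x : \overline{U}_x \to E_x$ depending only on the spectral data, not on any particular harmonic map. I would shrink $U_x$ so that $\overline{U}_x$ meets at most one branch point or at most one caustic crossing, and so that its boundary is composed of foliation arcs of the three foliations; one can then arrange that $\overline{U}_x$ is exactly $h_x^{-1}(E_x)$ (taking $E_x$ as the convex hull of the image in the non-branch cases, and as the union of three half-hexagons in the branch case). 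By compactness of $X$, or by local finiteness in the non-compact setting indicated after Question \ref{mainq}, finitely many such $U_i$ suffice.

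Given a $\phi$-harmonic map $h:X\to B$, the $\Omega_{pq}$ argument gives an apartment $A_i\subset B$ through which $h|_{\overline{U}_i}$ factors, and the factorization is just the integration map used to define $h^x$. In the non-caustic, smooth-caustic and caustic-crossing cases, $E_i$ is already an enclosure in $A_i$, so the isometric embedding $E_i\hookrightarrow B$ is simply the inclusion $E_i\subset A_i\subset B$, and the factorization $h|_{\overline{U}_i} = (E_i\hookrightarrow B)\circ h_i$ is automatic. In the branch-point case the singular construction $E_i$ is a union of three half-hexagons glued along a common segment; an isometric embedding into $B$ is obtained by extending each of the three half-hexagons to an apartment using the building axioms SB-loc and Ex-Obt at the image point $h(b)$, and then identifying the three apartments along their common segment as forced by the local branched structure of the $\phi_i$.

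The final condition, that $\overline{U}_i$ is a connected component of $h^{-1}(E_i)$, follows from the way we chose $U_i$: since the boundary of $\overline{U}_i$ lies entirely on foliation arcs, and since the three foliations on $X$ are exactly the pullbacks of the reflection hyperplane foliations on $A$ via $h^x$, any connected subset of $h^{-1}(E_i)$ meeting $\overline{U}_i$ must cross one of its boundary foliation arcs to leave, and such a crossing would immediately exit $E_i$ on the target side. Properness of $h_i$ then gives the component statement.

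The principal obstacle is the branch-point case, where the target $E_i$ is not an enclosure and the claim that it embeds \emph{isometrically} into $B$ is nontrivial: one must check that the three half-hexagons sitting over the three sheets of $\Sigma\to X$ near $b$ land in three distinct apartments of $B$ meeting precisely along the diameter prescribed by the double-root direction, with no accidental further folding. This reduces to verifying that the three positive-direction sectors at $h(b)$ picked out by the $\phi_i$ are pairwise distinct in the spherical pre-building $B_{h(b)}$, which should follow from the fact that $dh = \Re\phi$ and that the three $\phi_i$ locally have pairwise distinct real parts away from the branch divisor; this is the step that, although plausible, would require the most care to write out rigorously.
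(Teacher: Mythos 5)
Your proposal is correct and follows essentially the route the paper intends: the paper offers no explicit proof of this (per its stated convention it is a ``quasi-theorem''), and the argument is exactly the preceding local analysis --- the $\Omega_{pq}$ neighborhoods with their integration maps for the four types of points (non-caustic, smooth caustic, branch, caustic crossing), a finiteness/compactness extraction, and the observation that each $E_i$ is the convex hull of the image inside a single apartment forced by the $\Omega_{pq}$ argument. You also correctly isolate the one genuinely delicate point, namely that at a branch point the singular construction of three half-hexagons embeds isometrically (i.e.\ the three sectors at $h(b)$ are pairwise distinct in $B_{h(b)}$), which is precisely where care beyond the paper's sketch would be needed.
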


For the intersections $U_{ij}:= U_i\cap U_j$, we obtain constructions $E_{ij}$ with inclusions to both $E_i$ and
$E_j$, such that $E_{ij}$ is the convex hull of $h_i(U_{ij})$ in $U_i$ (resp. the convex hull of $h_j(U_{ij})$ in $U_j$).

Define
$$
Z:=\frac{\bigcup _i E_i} {\sim }
$$
where the relation $\sim$ is obtained by identifying the $E_{ij}\subset E_i$ with $E_{ij}\subset E_j$.

\begin{theorem}
\label{initialcons}
This defines a construction $Z$. We have a $\phi$-harmonic map $h_Z: X\rightarrow Z$ and it has the following
universal property: for any $\phi$-harmonic map to a building $h: X\rightarrow B$, there is a factorization
$h = h'\circ h_Z$ for a unique map of constructions $h': Z\rightarrow B$.
\end{theorem}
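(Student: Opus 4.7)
The plan is to verify three things in sequence: (i) that $Z$ is a well-defined construction in the sense of Section \ref{sec-presheaf}; (ii) that the local maps $h_i$ assemble into a $\phi$-harmonic map $h_Z: X\rightarrow Z$; and (iii) that $h_Z$ satisfies the stated universal property. For (i) I would present $Z$ as the coequalizer
$$
\coprod _{i,j} \widetilde{E}_{ij} \;\rightrightarrows \; \coprod _i \widetilde{E}_i \;\longrightarrow \; Z
$$
of sheaves on $\Enc$, where the two parallel arrows are the inclusions $E_{ij}\hookrightarrow E_i$ and $E_{ij}\hookrightarrow E_j$. Since the covering is finite, this is a finite colimit, and the theorem in the previous section stating that constructions are closed under finite colimits then delivers $Z$ as a construction, provided the gluing data is consistent on triple overlaps. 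Consistency on $U_{ijk}$ is forced by the characterization of $E_{ij}$ as the convex hull of $h_i(U_{ij})$ in the apartment near $U_i$: the image $h_i(U_{ijk})$ sits inside each of $E_{ij}$, $E_{ik}$ and its convex hull is transported identically in each $E_i$.

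For (ii) the composition $\overline{U}_i \stackrel{h_i}{\to} E_i \to Z$ gives a map from each $\overline{U}_i$ to $Z$. On the overlap $U_{ij}$ both $h_i|_{U_{ij}}$ and $h_j|_{U_{ij}}$ factor through $E_{ij}$ by construction, and the identification of $E_{ij}\subset E_i$ with $E_{ij}\subset E_j$ inside $Z$ forces the two composed maps to agree. Hence they glue to a continuous map $h_Z:X\rightarrow Z$. Being $\phi$-harmonic is a local property: on each $\overline{U}_i$ the map integrates $\Re\phi$ by construction of $h_i$, so $h_Z$ is $\phi$-harmonic.

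For (iii), suppose $h:X\rightarrow B$ is a $\phi$-harmonic map to a building. By the preceding theorem we obtain, for each $i$, an isometric embedding $\iota _i : E_i \hookrightarrow B$ with $h|_{\overline{U}_i} = \iota _i \circ h_i$. On an overlap $U_{ij}$, the two isometric embeddings $\iota _i|_{E_{ij}}$ and $\iota _j|_{E_{ij}}$ both agree on $h_i(U_{ij})=h_j(U_{ij})$, which is an open subset of $E_{ij}$. Since an affine map of enclosures is determined by its values on any open set in $A$, the two embeddings coincide as maps $E_{ij}\rightarrow B$. Thus the $\iota _i$ descend through the coequalizer to a unique morphism of constructions $h':Z\rightarrow B$ with $h'\circ h_Z=h$. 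Uniqueness of $h'$ follows from the fact that the $E_i\rightarrow Z$ are jointly surjective on points (and more generally in the sheaf-theoretic sense of the covering), so $h'$ is determined by the $\iota _i$ which are themselves determined by $h$ and $h_i$.

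The main obstacle, in my view, is keeping (i) rigorous: verifying that the coequalizer really yields a finitely related sheaf and not merely a presheaf whose sheafification could introduce new sections. The covering is finite, but one must check that every fiber product $E\times _Z E'$ for maps from enclosures is finitely generated; this should reduce, by standard-covering refinements and the lemma on subsheaves of finite products of $\widetilde{E}_i$, to the intersection geometry encoded in the enclosures $E_{ij}$, but the bookkeeping across triple and quadruple overlaps is the delicate part and would need the enclosures $E_{ijk\ldots}$ defined as convex hulls to fit together coherently. A secondary subtlety, in (iii), is ruling out any freedom in the local embeddings $\iota _i$ coming from the non-etale (folded or singular) parts of $h_i$ near caustics and branch points; here the fact that $\overline{U}_i$ is a connected component of $h^{-1}(E_i)$ is what pins the $\iota _i$ down uniquely.
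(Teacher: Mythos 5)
Your proposal is consistent with the paper, which (per its stated convention) gives no proof of this theorem at all: the intended route is exactly the one you describe, namely realize $Z$ as the coequalizer of $\coprod\widetilde{E}_{ij}\rightrightarrows\coprod\widetilde{E}_i$, invoke closure of constructions under finite colimits, and derive the universal property from the preceding covering theorem. One point deserves slightly more care than you give it: uniqueness of $h'$ is not immediate from joint surjectivity of the $E_i\rightarrow Z$ on points, because $h_Z(\overline{U}_i)=h_i(\overline{U}_i)$ is in general a proper subset of $E_i$ (e.g.\ the unshaded completed-parallelogram corners near caustics and the convex-hull excess at branch points), so $h$ only pins down $h'$ on that image; you then need the rigidity statement that an isometric embedding of $E_i$ (an enclosure, or a union of half-hexagons, each the convex hull of the relevant piece of the image) into a building is determined by its restriction to a subset with nonempty interior. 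This is true because such an embedding lands in a single apartment on each convex piece and apartment charts are affine, but it is this convex-hull rigidity, rather than the connected-component property, that does the work; the connected-component clause is instead what guarantees the local factorization in the covering theorem you are quoting. Your identification of the finitely-related verification for the coequalizer as the main technical gap matches the paper's own caveats about quotient constructions elsewhere (cf.\ the discussion following the definition of folding-in).
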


The construction $Z$ is called our {\em initial construction}.
It is not a pre-building because it will not, in general,
have the required nonpositive curvature property. For example the initial construction
described in Section \ref{BNRrev} will have fourfold points $a_1$ and $a_3$.

\begin{lemma}
\label{no2}
We can nonetheless insure that the local spherical constructions of $Z$ don't have cycles of length $2$.
\end{lemma}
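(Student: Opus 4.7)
The plan is to analyze when a cycle of length $2$ can appear in $Z_z$, and then refine the cover $\{U_i\}$ to eliminate any that do. Recall that in the bipartite graph description of a spherical construction, a cycle of length $2$ at $z$ consists of two distinct $2$-sectors (edges) $\eta _1, \eta _2\in Z_z(\eta )$ which share both of their $1$-sector boundaries $\nu ^+ \in Z_z(\nu ^+)$ and $\nu ^-\in Z_z(\nu ^-)$. Geometrically, this means two locally distinct ``wedges'' at $z$ whose pairs of boundary rays are identified, but whose interiors are not.

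First I would check that no such pathology occurs within any single $E_i$. Away from branch points, $E_i$ is an enclosure in the model apartment $A$, so its spherical construction at any point is a subgraph of the spherical apartment, which is a hexagon and in particular has girth $6$. At a branch point, $E_i$ is the union of three half-hexagons joined along a diameter as in Figure \ref{branch}; the spherical construction at the central node is a standard object that one checks directly to have no $2$-cycles (the three half-apartments are glued only along their one-dimensional spine, so distinct $2$-sectors always differ in at least one boundary direction). Hence a $2$-cycle in $Z_z$ must arise from the glueing relation $\sim$, that is, from two sectors coming from distinct $E_i, E_j$ which become identified along their boundaries in $Z$ without being identified in their interiors.

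Next I would show that by refining the covering we may arrange for every such pair of sectors to already be glued inside some $E_{ij}$. Suppose $z\in Z$ has preimages $x\in \overline{U}_i$ and $x'\in \overline{U}_j$, with $2$-sectors $\eta _1$ at $x$ and $\eta _2$ at $x'$ whose $1$-sector boundaries are identified in $Z_z$. By definition of $\sim$, each such boundary identification comes from some overlap $U_{ik_1}\cap U_{jk_2}$ (possibly via a chain of intermediate pieces). The $\Omega _{pq}$ argument provides, for any point $y$ in the interior of the sector $\eta _1$ close enough to $z$, a noncritical path from $y$ to the corresponding point $y'$ in $\eta _2$; the set $\Omega _{yy'}$ is a small open region in $X$ which must map to a single apartment under any $\phi$-harmonic map, and hence provides a natural enlargement of the $U_i$ forcing $\eta _1$ and $\eta _2$ to be identified already at the $E_{ij}$-level. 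Concretely, this amounts to adding the pair $(\eta _1,\eta _2)$ together with a small thickened neighborhood to $U_{ij}$ and extending $E_{ij}$ by the convex hull of the image. After doing this for each (locally finite) potential $2$-cycle, the resulting $Z$ still satisfies Theorem \ref{initialcons}, and no cycle of length $2$ survives.

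The main obstacle is that in principle a $2$-cycle could be introduced \emph{through a chain} of elementary identifications rather than a single overlap, so that the two sectors never lie in a common $E_{ij}$ even after refinement. To rule this out one uses simply-connectedness of sufficiently small neighborhoods in $X$ together with the fact that each elementary identification comes from an honest geometric overlap of open sets in $X$; thus any closed loop of identifications at $z$ can be contracted in $X$ and the composed identification is already present in a single $E_{ij}$. The verification of this contraction argument at branch points and caustic crossings, where the local geometry of $E_i$ is not itself an enclosure, is the technical point requiring the most care.
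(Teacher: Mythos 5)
The paper states this lemma without any argument at all (consistent with its announced convention that all lemmas are ``quasi-theorems''), so there is no official proof to measure yours against; I can only assess the proposal on its own terms. Your first step is sound: away from branch points each $E_i$ is an enclosure, whose local spherical constructions are subgraphs of the hexagon, and the three-half-hexagon model of Figure \ref{branch} has girth $6$ at the central point (two distinct $2$-sectors there always differ in at least one interior $1$-sector). You are also right that, because each $E_{ij}$ is taken to be a convex hull, a $2$-cycle cannot arise from a single overlap: if both boundary rays of a sector germ lie in a convex subset so does the sector. So the only danger is exactly the one you isolate at the end, namely the two boundary rays of a sector being identified through \emph{different} chains of overlaps.

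The weak point is your use of the $\Omega_{pq}$ argument to force the identification. That argument applies to pairs of points of $X$ joined by a noncritical path, but (a) the two sectors of a putative $2$-cycle need not both lie in the image of $X$ --- the paper is explicit that parts of the $E_i$, such as the parallelogram completions below the caustic in the BNR example, are not images of points of $X$ --- and (b) even when they are, you give no reason why corresponding interior points $y,y'$ should be joined by a noncritical path; identification of the boundary rays in $Z$ is a combinatorial consequence of the glueing relation and does not by itself produce such a path. The final paragraph about contracting ``loops of identifications'' in $X$ does not repair this, since the issue is not the contractibility of the chain but whether the forced identification can be realized inside some $E_{ij}$. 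A more robust route, which stays entirely within the paper's framework, is to observe that two sector germs at $z$ sharing both boundary vertices of $Z_z$ are congruent germs in the model apartment and must be folded onto one another under \emph{any} map $Z\rightarrow B$ to a building, since the local spherical buildings of $B$ have no cycles of length $\leq 4$; one may therefore impose this identification on $Z$ directly without disturbing the universal property of Theorem \ref{initialcons}. What then still requires checking --- and what your proposal also leaves open --- is that the resulting quotient is again a finitely related sheaf, i.e.\ a construction, and that the identifications to be imposed are locally finite so the procedure terminates.
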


In the next section we look at how to modify the initial construction in order to remove the positively curved points,
that is to say the points where the spherical building has a cycle of length $4$.

\section{Modifying constructions}
\label{sec-modifying}

In this section we consider how to go from the initial construction to a pre-building by a sequence of modification
steps. The reader is referred to Section \ref{BNRrev} for an illustration of the various operations
to be described here.

These operations are very similar in spirit to foldings and trimmings in the theory of Stallings graphs
\cite{Stallings, KapovichMyasnikov, ParzanchevskiPuder}, and the two-manifold construction that comes out
at the end should be considered as a ``core''.

\subsection{Scaffolding}

In order to keep track of what kind of folding happens, we first look at
some extra information that can be attached to a construction. In this subsection we
remain as usual in the $SL_3$ case.

Let $F$ be a construction. An {\em edge germ} of $F$  is defined to be a quadruple $(x,v,a,b)$ where $x\in F(p)$ and
$v$ is a vertex in the spherical construction $F_x$, and $a$ and $b$ are edges in $F_x$ sharing
$v$ as a common endpoint.

There is a change of dimension when passing from the spherical building to the construction itself,
so the vertex $v$ corresponds to a germ of $1$-dimensional segment based at $x$, and the
edges $a$ and $b$ correspond to germs of $2$-dimensional sectors based at $x$ that are
separated by the segment.

Suppose $f:F\rightarrow G$ is a map of constructions. If $(x,v,a,b)$ is an edge germ of $F$, then
the images $f(a)$ and $f(b)$ are edges in $G_{f(x)}$ sharing the
vertex $f(v)$. We say that
$f$ {\em folds} along $(x,v,a,b)$ if $f(a)$ and $f(b)$ coincide. We say that $f$ {\em opens} along $(x,v,a,b)$ if
it doesn't fold.

Let ${\bf EG} (F)$ denote the set of edge germs of $F$.
A {\em scaffolding} of a construction $F$ is a pair $\sigma = (\sigma ^o ,\sigma ^f)$ such that $\sigma ^o $ and $\sigma ^f$ are disjoint
subsets of ${\bf EG} (F)$. The first set $\sigma ^o $ is said to be the set of edge germs which are marked ``open'',
and the second set $\sigma ^f$ is said to be the set of edge germs which are marked ``fold''.

If $f:F\rightarrow G$ is a map of constructions, and $\sigma$ is a scaffolding of $F$, we say that
$f$ is {\em compatible with $\sigma$} if $f$ folds along the edge germs in $\sigma ^f$ and opens along the edge germs
in $\sigma ^o$.

Implicit in this terminology is that $G$ was provided with a fully open scaffolding (such as will
usually be the case for a pre-building). More
generally a map between constructions both provided with scaffoldings is compatible if it maps the open
edge germs in $F$ to open ones in $G$, and for edge germs marked ``fold'' in $F$ it either folds them
or else maps them into edge germs marked ``fold'' for $G$.

\begin{definition}
A scaffolding is {\em full} if $\sigma ^o \cup \sigma ^f = {\bf EG}(F)$. A scaffolding is {\em coherent} if there exists
a building $B$ and a map $h:F\rightarrow B$ compatible with $\sigma$.
\end{definition}

It should be possible to replace the definition of coherence with an explicit list of required properties, but we don't do that here.
Recall that we will be working under the assumption of existence of some $\phi$-harmonic map so the above definition is
adequate for our purposes.

One of the main properties following from coherence is a propagation property. A neighborhood of a hexagonal point cannot
be folded in an arbitrary way. One may list the possibilities, the main one being just folding in two; note however that
there is another interesting case of three fold lines alternating with open lines. Certain cases are ruled out and
we may conclude the following property:

\begin{lemma}
If $\sigma$ is coherent, and $x$ is a hexagonal point, then if two adjacent edge germs at $x$ are in $\sigma ^o$ it
follows that the two opposite edge germs are also in $\sigma ^o$.
\end{lemma}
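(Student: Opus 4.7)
The plan is to use coherence to produce a map to a building and then translate the open/fold data into combinatorial constraints on the image of the hexagon inside the spherical building at the image point, exploiting the fact that the spherical building for $SL_3$ is a bipartite simple graph of girth at least $6$.

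First, by coherence fix a map $h:F\to B$ to a building $B$ compatible with $\sigma$. It induces a graph morphism of spherical constructions $h_x:F_x\to B_{h(x)}$. Since $x$ is hexagonal, $F_x$ is a $6$-cycle in the bipartite sense: label its vertices cyclically as $v_1,\ldots,v_6$ (alternating parities) and its edges as $S_1,\ldots,S_6$, where $S_i$ joins $v_{i-1}$ to $v_i$ (indices mod $6$); so the edge germ at $v_i$ consists of $(S_i,S_{i+1})$. Up to relabeling, the two adjacent open edge germs are at $v_1$ and $v_2$, and the ``opposite'' ones are at $v_4$ and $v_5$.

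Next, translate openness into inequalities and squeeze out the non-obvious one. Compatibility with $\sigma$ gives $h_x(S_1)\neq h_x(S_2)$ and $h_x(S_2)\neq h_x(S_3)$. I claim also $h_x(S_1)\neq h_x(S_3)$: if they coincided, bipartite parity matching forces $h_x(v_1)=h_x(v_3)$ and $h_x(v_6)=h_x(v_2)$, so $h_x(S_2)$ and $h_x(S_3)$ become two edges between the same pair $\{h_x(v_1),h_x(v_2)\}$, which in the simple graph $B_{h(x)}$ forces $h_x(S_2)=h_x(S_3)$, contradicting openness at $v_2$. Thus $h_x(S_1),h_x(S_2),h_x(S_3)$ are pairwise distinct.

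Now assume for contradiction that $h$ folds at $v_4$, i.e.\ $h_x(S_4)=h_x(S_5)$; this entails $h_x(v_3)=h_x(v_5)$. Read the hexagon ``the long way around'' from $h_x(v_5)=h_x(v_3)$: the closed walk $h_x(S_3),h_x(S_2),h_x(S_1),h_x(S_6)$ has length $4$ in $B_{h(x)}$. The first three edges are distinct, so I do a case split on $h_x(S_6)$. If $h_x(S_6)\notin\{h_x(S_1),h_x(S_2),h_x(S_3)\}$, one gets a simple $4$-cycle in $B_{h(x)}$, violating girth $\geq 6$. If $h_x(S_6)=h_x(S_1)$ (a fold at $v_6$), then additionally $h_x(v_5)=h_x(v_1)$, and the residual walk $h_x(v_1)\xrightarrow{h_x(S_2)}h_x(v_2)\xrightarrow{h_x(S_3)}h_x(v_1)$ forces a multi-edge and hence $h_x(S_2)=h_x(S_3)$, contradicting openness at $v_2$. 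If $h_x(S_6)=h_x(S_2)$, parity forces $h_x(v_5)=h_x(v_1)$, $h_x(v_6)=h_x(v_2)$, and then $h_x(S_1),h_x(S_2)$ become two edges between $h_x(v_1)$ and $h_x(v_2)$, forcing $h_x(S_1)=h_x(S_2)$, contradiction. The sub-case $h_x(S_6)=h_x(S_3)$ is analogous, producing a multi-edge between $h_x(v_1)$ and $h_x(v_2)$ once $h_x(v_6)=h_x(v_2)$ is derived. Thus $h$ cannot fold at $v_4$; by a completely symmetric argument swapping the roles of $(v_1,v_2,v_3)$ with $(v_3,v_2,v_1)$ and of $v_4$ with $v_5$, it cannot fold at $v_5$ either. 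Hence both opposite germs are open, and since the forced openness is witnessed by the same $h$, adjoining them to $\sigma^o$ preserves coherence.

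The main obstacle is simply the bookkeeping in the case analysis: one must consistently track which vertex identifications in $B_{h(x)}$ are forced by each edge coincidence and verify that the simple-graph condition (no multi-edges) together with girth $\geq 6$ eliminates every sub-case. Everything beyond this is formal, relying only on the bipartite structure of the spherical building and the hypothesis \textbf{SPB-loc} that $B_{h(x)}$ has no cycles of length $\leq 4$.
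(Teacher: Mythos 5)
Your argument is correct, and it is worth noting that the paper itself supplies no proof here: consistent with its stated convention that all lemmas are ``quasi-theorems'', it only remarks that ``one may list the possibilities'' for folding a neighborhood of a hexagonal point. Your direct girth argument --- push the hexagon into $B_{h(x)}$ via a compatible map furnished by coherence, use bipartiteness to convert edge coincidences into vertex coincidences, and invoke the absence of cycles of length $\leq 4$ --- is exactly the mechanism implicit in the paper's later enumeration of admissible hexagonal configurations (all open; two opposite folds; three alternating folds; etc.), and it correctly handles the reading of the conclusion for non-full $\sigma$ (any compatible map must open the opposite germs, so adjoining them to $\sigma^o$ keeps $\sigma$ coherent).

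Two small points to tighten. First, in the case $h_x(S_6)\notin\{h_x(S_1),h_x(S_2),h_x(S_3)\}$ you assert a simple $4$-cycle, but four pairwise distinct edges in a closed walk of length $4$ could a priori still have coincident vertices; in that event two of the distinct edges would share both endpoints, i.e.\ a cycle of length $2$, which is equally forbidden by the no-loops-of-length-$\leq 4$ condition, so the case closes either way --- just say so. Second, the symmetry you invoke for $v_5$ (fixing $v_2$, swapping $v_1\leftrightarrow v_3$ and $v_4\leftrightarrow v_5$) is not an automorphism of the hexagon, since it sends the edge $v_3v_4$ to the non-edge $v_1v_5$. The correct symmetry is the reflection through the midpoints of $S_2$ and $S_5$, i.e.\ $v_1\leftrightarrow v_2$, $v_6\leftrightarrow v_3$, $v_4\leftrightarrow v_5$: it exchanges the two hypothesis germs and the two conclusion germs, and although it reverses the bipartition, your argument is parity-symmetric, so it applies verbatim. (Alternatively, the four-case analysis for a fold at $v_5$, using the closed walk on $S_1,S_2,S_3,S_4$ from $h_x(v_6)=h_x(v_4)$, goes through by the same reasoning.) Neither point is a gap in substance.
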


This will mainly be used to propagate the open or fold edge germs along edges which are ``straight'' in the following sense. This definition coincides with Definition \ref{immersive} for a pre-building
provided with the fully open scaffolding.

\begin{definition}
\label{straight}
Suppose $S$ is a segment and $\varphi \in F(S)$. Suppose $\sigma$ is
a scaffolding for $F$. We say that $\varphi $ is {\em straight} if, at any point $x$ in the interior of $\varphi$ (that is to say
$x\in F(p)$ is the image of a point $a\in S(p)$ which is not an endpoint), the forward and backward directions
along $\varphi $ in the spherical construction $F_x$ are separated by three edges $a,b,c$ such that the
edge germs $ab$ and $bc$ are in $\sigma ^o$. Here the edge germ denoted $ab$ corresponds to the vertex
separating the edges $a$ and $b$ in the spherical construction.
\end{definition}

\begin{corollary}
\label{straightcoh}
Suppose $\varphi$ is a straight edge and $\sigma$ a coherent scaffolding.
Then the marking of both forward and backward edge germs of $\varphi$
is the same all along $\varphi$.
\end{corollary}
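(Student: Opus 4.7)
By coherence, there exists a map $h\colon F\to B$ to a building which is compatible with $\sigma$. The strategy is to show that whether $h$ folds the forward (resp.\ backward) edge germ of $\varphi$ is locally constant, and hence globally constant, along $\varphi$; compatibility of $h$ with $\sigma$ then transfers this to a statement about the scaffolding markings.

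First apply the preceding lemma. By the straightness hypothesis, at each interior point $x$ of $\varphi$ the two adjacent edge germs $ab$ and $bc$ in the hexagonal spherical construction $F_x$ (namely the two edge germs along one side separating the forward vertex $v^+$ from the backward vertex $v^-$) lie in $\sigma^o$. The lemma then yields that the two opposite edge germs on the other side of the hexagon also lie in $\sigma^o$. Hence at every interior point of $\varphi$ the four ``side'' edge germs (all of the edge germs around $F_x$ except the one at $v^+$ and the one at $v^-$) are in $\sigma^o$; in particular $h$ does not fold across any of them.

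Now pick two close interior points $x=\varphi(t)$ and $x'=\varphi(t')$ with $t<t'$. Because $\varphi$ is straight and all side edge germs between $x$ and $x'$ are open, the segment of $\varphi$ from $x$ to $x'$ is flanked in $F$ by a single 2-dimensional enclosure $E_L$ on the left and a single 2-dimensional enclosure $E_R$ on the right. The forward edge germ of $\varphi$ at $x$ is then the edge germ formed by the germs of $E_L$ and $E_R$ at $x$, while the backward edge germ of $\varphi$ at $x'$ is the edge germ formed by the germs of $E_L$ and $E_R$ at $x'$. Since $h$ carries each of the single enclosures $E_L,E_R$ to a single enclosure in $B$, the condition $h(E_L)=h(E_R)$ is independent of whether we test it at $h(x)$ or at $h(x')$. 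Therefore $h$ folds the forward edge germ at $x$ if and only if $h$ folds the backward edge germ at $x'$.

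By compatibility of $h$ with $\sigma$, an edge germ in $\sigma^o$ (resp.\ $\sigma^f$) is opened (resp.\ folded) by $h$. Chaining the previous paragraph through a finite sequence of close pairs of interior points covering $\varphi$, we conclude that the fold behavior of $h$ at the forward edge germ is constant along $\varphi$, and likewise for the backward edge germ. This forces the marking of the forward (resp.\ backward) edge germ in $\sigma$ to be the same at all interior points of $\varphi$ where it is assigned, as claimed. The main technical subtlety is the identification in the third paragraph of the two flanking regions as single 2-dimensional enclosures of $F$; this relies on the combination of the straightness condition, the openness of the side edge germs supplied by the second paragraph, and the presentation of $F$ as a gluing of enclosures.
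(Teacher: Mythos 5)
Your argument matches the paper's intended one: the corollary is stated as a direct consequence of the preceding hexagonal-point lemma with no further proof given, and your combination of that lemma (to open all four transverse edge germs at interior points) with the coherence map $h$ and the uniform fold/no-fold dichotomy for the two regions flanking the segment is exactly the propagation the authors have in mind. Note only that, as in the paper's own applications (e.g.\ the segment $b_1a_0$ in the BNR example, where hexagonality of the interior points is explicitly invoked), your appeal to the lemma tacitly assumes every interior point of $\varphi$ is hexagonal --- a hypothesis not literally contained in Definition \ref{straight} (an eightfold interior point with a single fold line in the segment direction would break the forward/backward symmetry) but evidently intended by the authors.
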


Straight edges are mapped to straight edges:

\begin{lemma}
Suppose $F$ is a construction with scaffolding $\sigma$ and $\varphi \in F(S)$ is an edge. If
$h:F\rightarrow B$ is any map to a building compatible with $\sigma$, and if $\varphi$ is straight with respect to $\sigma$,
then the image of $\varphi$ is contained in a single apartment of $B$ and is a straight line segment
in that or any other apartment.
\end{lemma}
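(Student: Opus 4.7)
The plan is to show that the composition $\psi := h \circ \varphi : S \to B$ is immersive in the sense of Definition \ref{immersive}, and then invoke the earlier proposition asserting that any immersive map from an enclosure to a building factors through an apartment, together with the proposition that immersive maps to a building are injective.

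First I fix an interior point $y \in S(p)$ and set $x := \varphi(y)$. Straightness of $\varphi$ with respect to $\sigma$ at $y$ supplies three edges $a, b, c$ of the spherical construction $F_x$ forming a length-$3$ path from $\varphi_{y,+}$ to $\varphi_{y,-}$, with the two internal edge germs $ab$ and $bc$ lying in $\sigma^o$. Compatibility of $h$ with $\sigma$ means $h$ opens along both of these germs, so $h(a) \neq h(b)$ and $h(b) \neq h(c)$ in $B_{h(x)}$. A brief bipartite-plus-short-cycle check rules out the remaining collisions (either $h(a) = h(c)$, or any identification among the four vertex images at the ends of the edges): each such coincidence would produce a cycle of length $\leq 3$ in the spherical building $B_{h(x)}$, contradicting bipartiteness by parity together with the SB-loc axiom. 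Hence $h(a), h(b), h(c)$ assemble into a simple length-$3$ path from $\psi_{y,+}$ to $\psi_{y,-}$ in $B_{h(x)}$.

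Next, I would argue this path realizes the distance. The vertices $\psi_{y,+}$ and $\psi_{y,-}$ have opposite parity (inherited from $S_y$), so their spherical distance is odd, hence $1$ or $3$. A distance of $1$ would concatenate with our length-$3$ path to produce a $4$-cycle in $B_{h(x)}$, again contradicting SB-loc for $B$. Therefore the distance is exactly $3$, which is by convention the distance between the two elements of $S_y$, so the map $S_y \to B_{h(x)}$ preserves distances, i.e.\ $\psi$ is immersive at $y$. Endpoints of $S$ are vacuously immersive since $S_z$ is a singleton there, so $\psi$ has no angular points and is therefore immersive as a map of the enclosure $S$. At this point I would feed $\psi$ into the proposition extending immersive maps from enclosures to apartments: it factors through some apartment $A \hookrightarrow B$, placing the image of $\varphi$ in a single apartment. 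Injectivity of immersive maps to buildings then forces $\psi : S \to A = \rr^2$ to be an affine embedding, so its image is a straight line segment in $A$, and hence in any apartment of $B$ containing it.

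The main obstacle, and the heart of the matter, is the local spherical step ruling out distance $1$: it is precisely here that the scaffolding and its coherence do essential work. Opening at both $ab$ and $bc$ is what keeps the three edges distinct after applying $h$, and only then does the no-short-cycles axiom SB-loc on $B$ deliver the contradiction. Once immersivity is in hand, the remaining steps are bookkeeping applications of structural results already established in the paper.
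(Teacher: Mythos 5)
Your argument is correct, and since the paper (by its stated convention) leaves this lemma unproved as a ``quasi-theorem,'' your write-up supplies exactly the intended reasoning: openness of the two interior edge germs plus the no-short-cycle condition SB-loc forces the image path of length $3$ to be simple and distance-realizing, so $h\circ\varphi$ is immersive, and the apartment-extension and injectivity propositions finish the job. The only cosmetic quibble is the phrase ``cycle of length $\leq 3$'' --- by the parity of edges the offending cycle is of length $2$ (a doubled edge), which SB-loc excludes --- but the conclusion is unaffected.
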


Scaffolding is used to keep track of the additional information which comes from a $\phi$-harmonic
map, namely that small open neighborhoods in $X^{\ast}$ map without folding to apartments in
a building. For trees, this was illustrated in Figures \ref{littlenbds}, \ref{cornersegs} and \ref{cseg}.

\begin{proposition}
\label{initialscaff}
The initial construction $Z$ of Theorem \ref{initialcons} is provided with a coherent full scaffolding
$\sigma_Z$, such that if $x\in X-C$
is a non-caustic point, then all edge germs in the hexagon in $Z_{h_Z(x)}$ image of the local spherical construction of $E_x$ at the origin,
are in $\sigma _Z^o$. If $B$ is a building, there is a one-to-one correspondence between harmonic $\phi$-maps
$X\rightarrow B$ and construction maps $Z\rightarrow B$ compatible with $\sigma _Z$.
\end{proposition}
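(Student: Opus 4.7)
The strategy is to define $\sigma_Z$ directly from the local folding data of the maps $h^x:\overline{U}_x\to E_x$ that build $Z$, verify it is full and coherent via an auxiliary $\phi$-harmonic map to some $\RR$-building, and then read the bijection off the universal property of Theorem \ref{initialcons}. For each edge germ of $Z$ at a point $h_Z(x)$ arising from the local piece $E_x$, I would declare it open or fold according to whether $h^x$ is unfolded or folded along the corresponding germ. At a non-caustic point $h^x$ is étale, so all six edge germs of the surrounding hexagon in $Z_{h_Z(x)}$ go into $\sigma_Z^o$; this is the non-caustic assertion of the proposition. At a smooth caustic point, the two germs meeting the image of the caustic are placed in $\sigma_Z^f$ and the rest in $\sigma_Z^o$. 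At a branch point and at a caustic crossing, the explicit local models (three half-hexagons, respectively a hexagon with two $3$-to-$1$ strips) dictate the assignment in the same way. Consistency on the overlap pieces $E_{ij}$ follows because $E_{ij}$ is, by construction, the convex hull of $h_i(U_{ij})$ read from either side, so both markings reflect the same underlying germ; propagation of marks along straight edges (Corollary \ref{straightcoh}) then handles any germs produced by the glueing. Fullness is automatic since every edge germ of $Z$ arises from some $E_i$.

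For coherence, fix any $\phi$-harmonic map $h:X\to B$ to a building, as provided by the Korevaar--Schoen and Parreau theory underlying the setup. Theorem \ref{initialcons} gives a unique $h':Z\to B$ with $h=h'\circ h_Z$, and by construction the germwise behavior of $h$ on each $U_x$ matches the marking; thus $h'$ is compatible with $\sigma_Z$, which simultaneously proves coherence and the forward direction of the bijection. Conversely, given a compatible $h':Z\to B$, set $h:=h'\circ h_Z$; on each $U_x$ this is the composition $U_x\to E_x\to B$, whose first arrow has differential $\Re\phi$ by construction and whose second is an affine map on each sector that, thanks to compatibility, performs no folding beyond what is already encoded by $h_Z$ and does not destroy the local apartment structure. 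Hence $dh=\Re\phi$, so $h$ is $\phi$-harmonic. That the two assignments are mutually inverse follows from the uniqueness clause in Theorem \ref{initialcons}.

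The main obstacle is the coherence analysis at the fourfold points of $Z$, such as the points $a_1,a_3$ appearing in the BNR example, where several pieces $E_i$ are glued together and the naive assignment might over- or under-determine the markings. Here one must combine the explicit local models with Lemma \ref{no2} (to rule out length-$2$ cycles) and the hexagonal propagation rule for coherent scaffoldings to verify that the scaffolding produced above is globally consistent. Once this local case analysis is carried out at each of the finitely many singular types, the remainder of the argument is formal.
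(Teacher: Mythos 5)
Your overall architecture (define $\sigma_Z$ locally, prove coherence by invoking an existing $\phi$-harmonic map from Parreau's theory, and read the bijection off Theorem \ref{initialcons}) matches the paper's. But your rule for assigning the markings is wrong at exactly the delicate places. You propose to mark an edge germ ``fold'' whenever $h^x:U_x\to E_x$ folds along the corresponding germ, and in particular to put the germs meeting the image of a caustic into $\sigma_Z^f$. This confuses the two maps in the factorization $X\to Z\to B$: the scaffolding constrains maps $Z\to B$, not the map $h_Z$. The caustic folding is already performed by $h^x:U_x\to E_x$ and is thus built into $Z$ itself; the local model $E_x$ (an enclosure, or the three half-hexagons) embeds isometrically into any building, so \emph{all} of its edge germs must be marked ``open'' --- including those at smooth caustic points, branch points and caustic crossings. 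Marking them ``fold'' would make the scaffolding incoherent, since no map to a building compatible with it could restrict to the required isometric embedding $E_i\subset B$ from the theorem producing $Z$.

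The germs that actually receive the ``fold'' marking are the \emph{new} ones created by the glueing $Z=\bigcup E_i/\sim$: straight seam segments attached to fourfold points (the $a_1,a_3$ of the BNR example), which are not edge germs of any single $E_i$. Your claim that ``fullness is automatic since every edge germ of $Z$ arises from some $E_i$'' is therefore false, and propagation along straight edges (Corollary \ref{straightcoh}) cannot by itself determine these marks --- it only transports a mark once it is known somewhere. The missing argument, which is the heart of the paper's proof, is the one spelled out at $a_1$ in Section \ref{BNRrev}: at such a fourfold point, two of the four edge germs come from adjacent sectors at a single point of $X-C$ and hence cannot be folded by any map coming from a $\phi$-harmonic map; since the local spherical buildings of $B$ have no cycles of length $\le 4$, the remaining two opposite germs are forced into $\sigma_Z^f$. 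With that correction the rest of your argument (coherence via an auxiliary harmonic map, and the bijection via the universal property) goes through as in the paper.
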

\begin{proof} (Sketch)
All edge germs based at any point $x\in X-C$ are in $\sigma ^o$ by the remark at the end of
Section \ref{noncaustic}. In the regions near caustics, intersections of caustics or branch points,
the local constructions $E_x$ map into any building without folding, so all of their edge germs
are in $\sigma ^o$. The remaining edges are always straight segments attached to fourfold points, and at these
fourfold points two of the edges are already marked ``open'' so the other two must be marked ``fold''.
This is illustrated in the BNR example in Section \ref{BNRrev}.  The scaffolding is coherent because
we are assuming that there exists at least one $\phi$-harmonic map
such as can be obtained from Parreau's theory
\cite{Parreau} \cite{KNPS}.
\end{proof}

\subsection{Cutting out}

We now describe an important operation. Suppose $(F,\sigma )$ is a construction with coherent scaffolding.
Suppose $P^{a,b}$ is a parallelogram (with either orientation), and denote by
$V^{a,b}$ the union of two segments based at the origin
on the boundary of $P^{a,b}$. Suppose that $F$ may be written
as a pushout
$$
F= G\cup ^{V^{a,b}}P^{a,b}
$$
along a map $g:V^{a,b}\rightarrow G$. Let $\sigma _G$ be the induced scaffolding of $G$. Suppose that
the two segments making up $V^{a,b}$, of lengths $a$ and $b$ respectively, are straight
in $G$ with respect to $\sigma _G$.

\begin{theorem}
\label{th-trim}
In the above situation,  if $B$ is a building, then any map $h_G:G\rightarrow B$
compatible with $\sigma _G$ extends uniquely to a map $h:F\rightarrow B$ compatible with $\sigma$.
Furthermore, under this map the image of the parallelogram $P^{a,b}$ is isometrically embedded in $B$ and
doesn't fold any edge germs of ${\bf EG}(P^{a,b})$.
\end{theorem}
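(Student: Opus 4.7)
The plan is to produce $h$ by separately constructing its restriction to $P^{a,b}$ and invoking the pushout property. First I observe that, since the two segments $s_1,s_2$ forming $V^{a,b}$ are straight in $G$ with respect to $\sigma_G$, the lemma on the mapping of straight edges shows that $h_G\circ s_1$ and $h_G\circ s_2$ are straight segments in $B$, each contained in a single apartment. These share a common basepoint $x:=h_G(0)$. Because $P^{a,b}$ is locally an enclosure at its origin corner, the edge germ of $F$ lying between the two sides of $V^{a,b}$ and pointing into the interior of $P^{a,b}$ belongs to $\sigma^o$; the induced edge germ in $G$ is therefore also in $\sigma_G^o$, so $h_G$ cannot fold the two directions at $x$. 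Consequently the corresponding nodes $\varphi_{0,+}$ and $\psi_{0,+}$ in the spherical building $B_x$ are distinct, i.e.\ at distance $\geq 2$.

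The hypotheses of Ex-Obt are now satisfied, yielding an immersive extension $\zeta: P^{a,b}\to B$ of $h_G\circ s_1$ and $h_G\circ s_2$. Combining $\zeta$ with $h_G$ through the universal property of the pushout $F=G\cup^{V^{a,b}}P^{a,b}$ produces $h: F\to B$. Since $\zeta$ is immersive, it is injective by the proposition on immersive maps from enclosures, so the image of $P^{a,b}$ is isometrically embedded in $B$ and no element of ${\bf EG}(P^{a,b})$ is folded; combined with compatibility of $h_G$ with $\sigma_G$, this gives compatibility of $h$ with $\sigma$.

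For uniqueness, Corollary \ref{common-apartment} provides a common apartment $A'\subset B$ containing both straight segments, which is isometrically identified with the standard affine apartment $A$. In $A$, the parallelogram $P^{a,b}$ is uniquely determined by its two adjacent edges based at the origin, so any immersive extension of this V must coincide with $\zeta$. Any compatible $h: F\to B$ extending $h_G$ restricts on $P^{a,b}$ to a map that is forced to be immersive (all edge germs internal to $P^{a,b}$ lie in $\sigma^o$, hence cannot be folded), and so must equal $\zeta$; uniqueness then follows from the pushout property.

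The main obstacle is the bookkeeping for the scaffolding: verifying that the edge germ at the origin of $V^{a,b}$ separating the two sides lies in $\sigma_G^o$, and that every interior edge germ of $P^{a,b}$ lies in $\sigma^o$. Both points require unpacking how a coherent scaffolding on $F$ decomposes along the pushout, and rely on the fact that $P^{a,b}$ is internally an enclosure, so a coherent scaffolding on $F$ must open all of its interior edge germs. Once this local analysis is settled, the rest reduces to the extension axiom Ex-Obt together with the uniqueness of parallelograms in a single affine apartment.
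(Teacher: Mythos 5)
The paper states Theorem \ref{th-trim} without proof (by the authors' own convention, all such results are ``quasi-theorems''), so your proposal can only be judged on its own terms. Your overall architecture --- send the two edges of $V^{a,b}$ into $B$ by the straight-edge lemma, fill in the parallelogram by Ex-Obt, assemble $h$ via the pushout property, and derive uniqueness from the rigidity of the parallelogram spanned by a standard $V$ inside an apartment --- is the natural one and is surely close to what the authors have in mind.

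However, there is a genuine gap at the one step that actually requires an argument: verifying the hypothesis of Ex-Obt, namely that $h_G$ sends the two directions of $V^{a,b}$ at the origin to \emph{distinct} nodes of the spherical building $B_x$. You justify this by claiming that the edge germ of $F$ at the origin separating the two interior sectors of $P^{a,b}$ lies in $\sigma^o$, and that it ``induces'' an open edge germ of $G$. Both halves of this are incorrect. First, the paper explicitly notes, immediately after the theorem, that the edge germs in ${\bf EG}(P^{a,b})\subset {\bf EG}(F)$ are ``either marked open, or not marked'': coherence of $\sigma$ only asserts the existence of one compatible map to some building and places no constraint on unmarked edge germs, so nothing forces the interior edge germs of $P^{a,b}$ into $\sigma^o$. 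Second, and more seriously, the edge germ in question has both of its sectors inside $P^{a,b}$, which is not part of $G$; it therefore does not restrict to an edge germ of $G$ at all, and compatibility of $h_G$ with $\sigma_G$ says nothing about it. Straightness of the two segments does not help either, since Definition \ref{straight} constrains only interior points of a segment, not its endpoints. What is really needed is an additional non-degeneracy input --- essentially that $g:V^{a,b}\to G$ is ``standard'' in a scaffolded sense (compare Definition \ref{pb-std} and the standardness hypothesis of Theorem \ref{patchcut}) and that this is preserved by every map compatible with $\sigma_G$. Without it, $h_G$ could send the two edges to segments sharing their initial direction, in which case the extension to $P^{a,b}$ exists but is neither immersive nor unique --- exactly the failure mode the authors point out at the end of Section \ref{sec-recovering}. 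The same unjustified claim reappears in your uniqueness argument (``all edge germs internal to $P^{a,b}$ lie in $\sigma^o$''); a cleaner route there is the assertion, made in the folding-in discussion preceding Lemma \ref{foldingin}, that a map $P^{c,d}\to B$ is already determined by its restriction to a standard $V^{c,d}$.
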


In the above situation, we refer to $G$ as being obtained from $F$ by {\em cutting out} the image of the
parallelogram $P^{a,b}$. Notice that all of the edge germs in ${\bf EG}(P^{a,b})\subset {\bf EG}(F)$
are either marked ``open'', or not  marked. We may extend the scaffolding $\sigma$ to one $\sigma _1$
with $\sigma _1^f=\sigma ^f$ and $\sigma _1^o = \sigma ^o\cup {\bf EG}(P^{a,b})$,
and the theorem implies that any map from $F$ to a building compatible with $\sigma$ must also be
compatible with $\sigma _1$. If $\sigma$ is full then of course coherence implies that $\sigma _1=\sigma$.

Note also that if $\sigma$ is full then $\sigma _G$ is full.

We say that $(F,\sigma )$ is {\em trimmed} if it is not possible to cut out any parallelograms in the above way.

\begin{theorem}
\label{initialtrim}
The initial construction $Z$ of Theorem \ref{initialcons}, provided with the scaffolding $\sigma _Z$ of
Proposition \ref{initialscaff}, may be trimmed. The result is a two-manifold construction $Z_0$ again provided
with a coherent full scaffolding $\sigma _0$. If $B$ is a building,
there is a one-to-one correspondence between:
\begin{itemize}

\item Harmonic $\phi$-maps $X\rightarrow B$;

\item Maps $Z\rightarrow B$ compatible with $\sigma _Z$; and

\item Maps $Z_0\rightarrow B$ compatible with $\sigma _0$.

\end{itemize}
\end{theorem}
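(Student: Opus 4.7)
The plan is to establish the three-term bijection by composing two bijections. The first, between harmonic $\phi$-maps $X\to B$ and maps $Z\to B$ compatible with $\sigma_Z$, is already furnished by Proposition \ref{initialscaff}. So the main work is to produce $Z_0$ together with its scaffolding $\sigma_0$ and verify that maps $Z\to B$ compatible with $\sigma_Z$ are in bijection with maps $Z_0\to B$ compatible with $\sigma_0$; this bijection will come by iterating Theorem \ref{th-trim}.

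First I would define the trimming as a (transfinite) iteration. Set $F_0:=Z$ and $\tau_0:=\sigma_Z$. At each stage, ask whether there exists a pushout decomposition $F_\alpha=G\cup^{V^{a,b}}P^{a,b}$ satisfying the straightness hypothesis of Theorem \ref{th-trim}. If so, replace $(F_\alpha,\tau_\alpha)$ by $(G,\tau_G)$; if not, stop and declare $(Z_0,\sigma_0):=(F_\alpha,\tau_\alpha)$. At each single step, Theorem \ref{th-trim} says that every map $F_{\alpha+1}\to B$ compatible with $\tau_{\alpha+1}$ extends uniquely to $F_\alpha\to B$ compatible with $\tau_\alpha$, and conversely every compatible map from $F_\alpha$ restricts to one from $F_{\alpha+1}$; so the bijection is preserved. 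Fullness of $\tau_\alpha$ passes to $\tau_{\alpha+1}$ by the remark following Theorem \ref{th-trim}. Coherence passes as well: a building $B$ witnessing coherence of $(F_\alpha,\tau_\alpha)$ restricts to one witnessing coherence of $(G,\tau_G)$, since $G\hookrightarrow F_\alpha\to B$ is compatible with the induced scaffolding. By construction, the terminal $(Z_0,\sigma_0)$ is trimmed.

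The main obstacle will be to show that the process actually terminates after finitely many steps rather than running through a transfinite ordinal. Here I would exploit the fact that the covering $\{U_i\}$ in Theorem \ref{initialcons} is finite, so $Z$ is obtained by glueing finitely many enclosures and branch-point pieces $E_i$ and hence has finite total two-dimensional area in the Finsler metric pulled back from the standard apartment by the local integration maps $h^x$. Each cut removes the interior of a parallelogram $P^{a,b}$ of strictly positive area; moreover, the straight boundary edges of any candidate $P^{a,b}$ are required to be straight in $(G,\tau_G)$, and the rigid combinatorics of the finitely many $E_i$ and their gluings (together with Lemma \ref{no2} ruling out length-$2$ cycles, which bounds the local geometry at fourfold points) should force a uniform lower bound on $ab$ at every stage. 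The subtle point to verify is that nested infinite sequences of ever-smaller parallelograms accumulating along a caustic do not arise; I expect this to reduce to showing that the scaffolding maintains a locally finite combinatorial structure on $F_\alpha$ throughout the iteration.

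Finally, two-manifoldness of $Z_0$ follows from the fact that $Z$ itself is two-dimensional — each $E_i$ being either a two-dimensional enclosure or the union of three half-hexagons along a common diameter, and the overlaps $E_{ij}$ being two-dimensional — together with the observation that cutting out $P^{a,b}$ along two boundary segments preserves two-dimensionality and the local manifold-with-branching structure (since the two straight edges lie on the boundary of the remaining piece $G$). Combining Proposition \ref{initialscaff} with the composition of the bijections at each trimming step then yields the three-way correspondence asserted in the theorem.
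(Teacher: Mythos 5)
The paper records no proof of this statement (by its stated convention it is a ``quasi-theorem''), but your overall strategy --- compose the bijection of Proposition \ref{initialscaff} with a chain of bijections obtained by iterating Theorem \ref{th-trim}, one per excised parallelogram, tracking fullness and coherence of the induced scaffolding at each stage --- is exactly the intended one. The problem is with the step you treat as an afterthought, which is where the substantive content of the theorem lies. You assert that two-manifoldness of $Z_0$ ``follows from the fact that $Z$ itself is two-dimensional'' and that cutting preserves ``the local manifold-with-branching structure.'' But $Z$ is \emph{not} a two-manifold: at a branch point the local model $E_x$ is the union of three half-hexagons joined along their diameters, so the link of that point (and of every point on the common diameter) is a graph with trivalent vertices rather than a polygon; similarly the parallelogram completions attached along the caustics are flaps not in the image of $X$. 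Being two-dimensional is strictly weaker than being a two-manifold, and ``manifold with branching'' is precisely what $Z_0$ must not be. What actually has to be shown is that trimming \emph{removes} all the branching: every non-manifold locus of $Z$ sits inside these excess flaps; each flap decomposes as parallelograms $P^{a,b}$ attached along a $V^{a,b}$ whose two edges are straight for $\sigma_Z$ (because they bound images of neighborhoods $U_x$ with $x\in X-C$, whose edge germs are open --- this is the role of the trivalent-edge discussion of the segments $b_1a_0$ and $a_4b_2$ in the BNR example); and once all flaps are cut the remaining links are polygons. None of this appears in your write-up, and without it the claim that $Z_0$ is a two-manifold is unsupported.

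On termination, your area/uniform-lower-bound heuristic is more machinery than the situation requires and is also the least convincing part of the argument as written. For the initial construction the excess pieces are explicitly finite in number --- one finite family per element of the finite covering $\{U_i\}$ (the completions along each caustic region and the extra half-hexagons at each branch point) --- so the trimming is a finite, explicitly enumerable sequence of applications of Theorem \ref{th-trim}, and no transfinite induction or accumulation analysis is needed. The transfinite and convergence worries are genuine, but they belong to the later reduction process of Section \ref{sec-reduction}, not to this theorem.
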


\subsection{Pasting together}
\label{sec-pasting}

The construction $Z_0$ now obtained will have, in general, many fourfold points of positive curvature. These cannot appear
in the image of a map $Z_0\rightarrow B$, so some folding must occur. The direction of the folds is determined by the
scaffolding, and indeed that was the reason for introducing the notion of scaffolding: without it, there is not
{\em a priori} any preferred way of determining the direction to fold a fourfold point. However, once the direction is
specified, we may proceed to glue together some further pieces of the construction according to the required folding.
This is the ``pasting together'' process.

Put
$$
W^{a,b}:= P^{a,b}\cup ^{V^{a,b}} P^{a,b}.
$$
We have a projection $\pi : W^{a,b}\rightarrow P^{a,b}$ given by the identity on each of the two pieces.

Here we may use either of the two possible orientations, that doesn't need to be specified but of course it should be the
same for both parallelograms.

The local spherical construction
of $W^{a,b}$  at the origin $0\in W^{a,b}$ is a graph with a single cycle of length $4$, in other words
the origin is a fourfold point.
In particular, if $W^{a,b}\rightarrow B$ is any map to a building, two of the four edge germs at $0$ must be folded.
There is a choice here: at least two opposite edge germs must be folded, but the other two could either be opened or
also folded. We are interested in the case when the originally given parallelograms are not folded.
Let $v_1,v_2$ be the edge germs at the origin in $W^{a,b}$ which are in the middle of the spherical constructions of the
two pieces $P^{a,b}$ (the spherical construction of $P^{a,b}$ at the origin is a graph with two adjacent edges and
three vertices, the middle edge germ corresponds to the middle vertex).

\begin{proposition}
\label{patchprop}
Suppose we are given a coherent scaffolding $\sigma _W$ of $W^{a,b}$. We assume that $v_1$ and $v_2$ are in $\sigma _W^o$,
and that the two edges comprising $V^{a,b}$ are straight. Then any map $h_W:W^{a,b}\rightarrow B$ to a building,
compatible with $\sigma _W$, factors through the projection
$$
W^{a,b}\stackrel{\pi}{\rightarrow} P^{a,b}\stackrel{h_P}{\rightarrow} B
$$
via a map $h_P$ which is an isometric embedding of the parallelogram into a single apartment of $B$.
\end{proposition}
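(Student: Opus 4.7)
The plan is to analyze the spherical construction $W^{a,b}_0$ at the origin, deduce from the building axiom and from $v_1,v_2\in\sigma_W^o$ that the only compatible collapse of the link at the origin is the one induced by $\pi$, propagate that identification along $V^{a,b}$ using straightness, and then extend it over each parallelogram using Ex-Obt together with the rigidity of apartments.

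First I would describe $W^{a,b}_0$ explicitly. Each copy of $P^{a,b}$ contributes a length-$2$ path of edges at the origin (three vertices, two edges, with the middle vertex carrying $v_i$); these two paths are glued at their outer vertices along $V^{a,b}_0$, producing a $4$-cycle. Label the edges $e_1,e_2,e_3,e_4$ in cyclic order so that $v_1$ sits at the vertex between $e_1,e_2$ and $v_2$ at the vertex between $e_3,e_4$; the remaining two vertices, the images of $V^{a,b}_0$, I call $w_1,w_2$, with corresponding edge germs $\tau_1,\tau_2$.

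Since $B$ is a building, $B_{h_W(0)}$ is a spherical building and contains no cycle of length $\leq 4$, so the image of this $4$-cycle under $h_W$ must collapse. Compatibility with $\sigma_W$ forbids folding at $v_1$ or $v_2$, so $e_1$ and $e_2$ remain distinct in the image, as do $e_3$ and $e_4$. Ruling out $2$-cycles in the image (cf.\ Lemma~\ref{no2}), the only remaining option is to identify $e_1$ with $e_4$ and $e_2$ with $e_3$, together with $w_1\sim w_2$ and folding of $\tau_1,\tau_2$; this is precisely the collapse induced by $\pi$. By straightness of the two segments of $V^{a,b}$ and Corollary~\ref{straightcoh}, the fold markings at $\tau_1,\tau_2$ propagate along the forward edge germs of both segments, so along $V^{a,b}$ the two sheets of $W^{a,b}$ are identified by $h_W$, and $h_W(V^{a,b})$ consists of two straight edges in $B$ emanating from $h_W(0)$.

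To extend the factorization through $\pi$ over the interior of each parallelogram, I would apply Ex-Obt. After the identification at the origin, the two straight edges in $B$ subtend directions differing by the vertex $v_1=v_2$, which is open, so they lie at distance $\geq 2$ in $B_{h_W(0)}$ and satisfy the hypothesis of Ex-Obt. This produces an immersive map $h_P:P^{a,b}\rightarrow B$ extending the prescribed boundary. By the results of Section~\ref{sec-presheaf}, an immersive map from an enclosure into a building is injective and extends to an apartment, so $h_P$ is an isometric embedding into a single apartment. Rigidity of this extension, together with the fact that $h_W$ restricted to each sheet $P^{a,b}$ coincides with $h_P$ on $V^{a,b}$ and on a neighborhood thereof, forces $h_W|_{P_i}=h_P$ for $i=1,2$, yielding $h_W=h_P\circ\pi$.

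The hardest step is the rigidity assertion at the origin: one must rigorously enumerate the compatible collapses of the $4$-cycle in $W^{a,b}_0$ into a bipartite graph of girth $\geq 6$, and verify that openness of $v_1,v_2$ combined with the exclusion of $2$-cycles really leaves only the pattern induced by $\pi$. A secondary technical subtlety is in the final extension step: without an a priori immersivity hypothesis on $h_W|_{P_i}$, one needs a uniqueness statement for maps from a parallelogram into a building that extend prescribed straight edges at an obtuse angle, which should follow from the uniqueness of the apartment containing any two non-degenerate straight edges meeting at a point with spherical separation $\geq 2$ (Corollary~\ref{common-apartment} applied to the endpoints of the parallelogram).
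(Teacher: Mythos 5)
The paper never writes out a proof of Proposition \ref{patchprop} (consistent with its ``quasi-theorem'' convention); the closest it comes is the sketch in Section \ref{sec-progress}, where the identification of the two parallelograms is attributed to the $\Omega_{pq}$/Finsler-convexity argument: the two common edges form a noncritical path between the extremal corners $p,q$, any apartment containing $h(p)$ and $h(q)$ contains the parallelogram spanned by them, and both sheets must land there. Your route is genuinely different and more local: you enumerate the collapses of the $4$-cycle in $W^{a,b}_0$ compatible with openness of $v_1,v_2$ and the girth condition, propagate the resulting fold along $V^{a,b}$ via straightness, and then invoke uniqueness of extensions of a standard $V^{a,b}\rightarrow B$ to $P^{a,b}\rightarrow B$ together with Ex-Obt to identify the two sheets and exhibit the image as an immersed, hence isometrically embedded, parallelogram. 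This is actually better adapted to the proposition as stated: the hypotheses do \emph{not} include openness of the interior edge germs of the two sheets (indeed the remark following the proposition derives that as a consequence), whereas the Section \ref{sec-progress} sketch presupposes it in order to call the sheets ``swept out by noncritical paths''. Your argument derives non-folding of the interiors rather than assuming it.

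Two caveats. First, the exclusion of $2$-cycles in the image should be cited to the girth condition in the definition of a spherical (pre-)building via SB-loc for $B$, not to Lemma \ref{no2}, which concerns the initial construction $Z$ rather than the target. Second, your load-bearing uniqueness step --- that a map $P^{a,b}\rightarrow B$ extending a standard $V^{a,b}\rightarrow B$ is unique --- is only asserted in the paper (in the folding-in discussion), and your proposed justification via ``uniqueness of the apartment'' containing the two edges is not the right mechanism: apartments containing a given configuration are far from unique. What one actually needs is that the parallelogram with opposite endpoints $h(p),h(q)$ lies in the Finsler-convex intersection of any two apartments containing those points and the map on it is determined by its values on $V^{a,b}$ --- which is precisely the $\Omega_{pq}$ convexity argument. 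So the two approaches ultimately converge on the same underlying fact; yours isolates it cleanly as the only global input, with everything else handled combinatorially at the level of links.
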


Let $\sigma _{W,{\rm max}}$ be the full scaffolding defined by taking all edge germs of each $P^{a,b}$
to be open, and letting the edge germs along the edges of $V^{a,b}$ be folded. A corollary of the proposition
is that any coherent scaffolding of $W^{a,b}$ satisfying the hypotheses
has to be contained in $\sigma _{W,{\rm max}}$.

Now suppose $F$ is a construction with coherent scaffolding $\sigma$. Suppose we have an inclusion
$$
W^{a,b}\hookrightarrow F
$$
and suppose that the induced scaffolding $\sigma _W$ of $W^{a,b}$ satisfies the hypotheses of the proposition,
namely the angle between $v_1$ and $v_2$ is open and the edges of $V^{a,b}$ are straight. Define the quotient
$$
\widetilde{F} := F\cup ^{W^{a,b}} P^{a,b}
$$
which amounts to {\em pasting together} the two parallelograms which make up $W^{a,b}$.

\begin{corollary}
\label{patchcor}
Under the above hypotheses, any map to a building $h:F\rightarrow B$ compatible with $\sigma$ factors
through a map
$$
\widetilde{F}\rightarrow B.
$$
\end{corollary}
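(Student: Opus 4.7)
The plan is to apply Proposition \ref{patchprop} to the restriction of $h$ to $W^{a,b}$, and then use the universal property of the pushout that defines $\widetilde{F}$. Let $h:F\to B$ be a map compatible with $\sigma$, and let $h_W$ denote the composition of $h$ with the inclusion $W^{a,b}\hookrightarrow F$. Since compatibility with a scaffolding is checked pointwise on edge germs, $h_W$ is automatically compatible with the induced scaffolding $\sigma_W$. The hypotheses of the corollary are precisely those of Proposition \ref{patchprop}: the edge germs $v_1,v_2$ at the origin are open, and the two edges comprising $V^{a,b}$ are straight. The proposition therefore supplies a factorization $h_W=h_P\circ\pi$, where $\pi:W^{a,b}\to P^{a,b}$ is the projection and $h_P$ embeds $P^{a,b}$ isometrically into a single apartment of $B$.

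Next I would invoke the pushout description $\widetilde{F}=F\cup^{W^{a,b}}P^{a,b}$, with structure maps the inclusion $W^{a,b}\hookrightarrow F$ and the projection $\pi:W^{a,b}\to P^{a,b}$. The two maps $h$ and $h_P$ agree on $W^{a,b}$: restricted along the inclusion, $h$ gives $h_W$, while $h_P\circ\pi=h_W$ by the preceding step. The theorem on closure of constructions under finite colimits produces the pushout in our category, and its universal property yields a unique map $\widetilde{h}:\widetilde{F}\to B$ whose pullback to $F$ is $h$ and whose pullback to $P^{a,b}$ is $h_P$. This is exactly the asserted factorization. Compatibility of $\widetilde{h}$ with the natural scaffolding $\widetilde{\sigma}$ on $\widetilde{F}$ is then checked locally: away from the pasted $P^{a,b}$ it reduces to compatibility of $h$ with $\sigma$, while on the pasted $P^{a,b}$ the fact that $h_P$ is an isometric embedding into a single apartment guarantees that every interior edge germ is opened.

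The step I expect to be the main obstacle is not the factorization itself but the underlying verification that $\widetilde{F}$ is still a construction with a coherent scaffolding, rather than a merely formal pushout of sheaves. Pasting the two parallelograms of $W^{a,b}$ together can {\it a priori} identify further pieces of $F$ and could in principle introduce $2$-cycles in some local spherical construction, spoiling SPB-loc. The remedy is the folding-in mechanism of Lemma \ref{foldingin}, applied to the image of $P^{a,b}$ inside $F$: using straightness of the edges of $V^{a,b}$ (so that the inclusion $V^{a,b}\to F$ is standard in the sense of Definition \ref{pb-std}) and openness of $v_1,v_2$, the folding-in universal property guarantees that the resulting $\widetilde{F}$ remains a pre-building-like construction and that the map $\widetilde{h}$ we just produced is the unique one compatible with $\widetilde{\sigma}$. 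Once that technical point is granted, the rest of the argument is a direct diagram chase.
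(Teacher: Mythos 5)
Your argument is correct and is exactly the intended one: the paper states Corollary \ref{patchcor} as an immediate consequence of Proposition \ref{patchprop} applied to $h|_{W^{a,b}}$ (which is compatible with $\sigma_W$ by restriction), followed by the universal property of the pushout $\widetilde{F}=F\cup^{W^{a,b}}P^{a,b}$, since $h$ and $h_P$ agree on $W^{a,b}$ via $h_P\circ\pi=h_W$. Your closing remarks on whether $\widetilde{F}$ remains a well-behaved construction go beyond what the corollary asserts, but they correctly identify the technical issue the paper defers to the folding-in discussion.
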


This operation  may be combined with the cutting-out operation. Let $\overline{V}^{a,b}$ denote the opposite
copy of $V^{a,b}$ inside $P^{a,b}$. We may write
$$
\partial W^{a,b} = \overline{V}^{a,b} \cup ^{p\sqcup p}  \overline{V}^{a,b},
$$
it is a rectangular one-dimensional construction formed from two edges of length $a$ and two edges of length $b$
(again as usual, we should chose one of the two possible orientations for everything). We may define a projection
$$
\partial W^{a,b} \rightarrow V^{a,b}
$$
as the identity on the two pieces. The combination of pasting together and cutting out may be summarized in the
following theorem.

\begin{theorem}
\label{patchcut}
Suppose $(F,\sigma )$ is a construction with coherent scaffolding. Suppose that $F$ can be written
$$
F= G\cup ^{\partial W^{a,b}}W^{a,b} .
$$
Suppose the scaffolding $\sigma _W$ induced by $\sigma$ on $W^{a,b}$ satisfies the hypotheses of Proposition
\ref{patchprop}. Define
$$
\widehat{G}:= G\cup ^{\partial W^{a,b}}V^{a,b} .
$$
We may write
$$
\widetilde{F}= \widehat{G}\cup ^{V^{a,b}}P^{a,b}.
$$
Furthermore, $\sigma$ induces scaffoldings $\sigma _G$ on $G$ and $\sigma _{\widehat{G}}$ on $\widehat{G}$.
Let $\sigma _{\widetilde{F}}$ be the scaffolding of $\widetilde{F}$ obtained from
$\sigma _{\widehat{G}}$ by declaring all edge germs of $P^{a,b}$ to be open.

Assume that $V^{a,b}\subset \widehat{G}$ is standard with respect to the
scaffolding $\sigma _{\widehat{G}}$. Then there is a one-to-one correspondence between the sets
$$
\left\{
\mbox{maps } h': \widehat{G}\rightarrow B \mbox{ to a building compatible with } \sigma _{\widehat{G}}
\right\}
$$
and
$$
\left\{
\mbox{maps } h: F\rightarrow B \mbox{ to a building compatible with } \sigma ,
\right\}
$$
both being the same as
$$
\left\{
\mbox{maps } h: \widetilde{F}\rightarrow B \mbox{ to a building compatible with } \sigma _{\widetilde{F}}
\right\}
$$
via the natural maps
$$
\widehat{G} \rightarrow \widetilde{F} \leftarrow F .
$$
\end{theorem}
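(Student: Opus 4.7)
The plan is to assemble the three-way bijection by combining Proposition \ref{patchprop} (which relates maps out of $F$ to maps out of $\widetilde F$) with Theorem \ref{th-trim} (which relates maps out of $\widehat G$ to maps out of $\widetilde F$), using the identification $\widetilde F = \widehat G \cup^{V^{a,b}} P^{a,b}$ as a bridge. I would first record the chain of pushouts
\[
\widetilde F = F\cup^{W^{a,b}} P^{a,b} = G\cup^{\partial W^{a,b}} P^{a,b} = \widehat G \cup^{V^{a,b}} P^{a,b},
\]
obtained by composing pushouts along $\partial W^{a,b}\hookrightarrow W^{a,b}\stackrel{\pi}{\to} P^{a,b}$ (whose composite factors through $V^{a,b}\subset P^{a,b}$), so that both presentations of $\widetilde F$ are canonically identified, and the natural maps $\widehat G\to\widetilde F\leftarrow F$ are the structural inclusions into the pushout.

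First I would establish the bijection between $F$-maps and $\widetilde F$-maps. Given $h:F\to B$ compatible with $\sigma$, its restriction to $W^{a,b}$ is compatible with the induced $\sigma_W$, so Proposition \ref{patchprop} factors it uniquely as $W^{a,b}\stackrel\pi\to P^{a,b}\stackrel{h_P}\to B$ with $h_P$ an isometric embedding into a single apartment. The universal property of $\widetilde F = F\cup^{W^{a,b}} P^{a,b}$ then produces a unique extension $\widetilde h:\widetilde F\to B$; since $h_P$ is isometric it opens every edge germ of $P^{a,b}$, yielding compatibility with $\sigma_{\widetilde F}$. Conversely, precomposition with the natural map $F\to\widetilde F$ recovers $h$, and uniqueness in the pushout makes these assignments mutually inverse.

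Next, to relate $\widehat G$-maps and $\widetilde F$-maps, I would apply Theorem \ref{th-trim} directly to the pushout presentation $\widetilde F = \widehat G\cup^{V^{a,b}} P^{a,b}$. Its hypotheses are satisfied: by assumption $V^{a,b}\subset\widehat G$ is standard, so both edges are straight relative to $\sigma_{\widehat G}$, and $\sigma_{\widetilde F}$ is defined so that all edge germs of $P^{a,b}$ are open. Theorem \ref{th-trim} then gives the desired bijection, with the extension realized by an isometric embedding of $P^{a,b}$. Composing with the bijection from the previous paragraph produces the full three-way correspondence, mediated by the natural maps $\widehat G\to\widetilde F\leftarrow F$, and a diagram chase confirms that the two resulting extensions of a given $\widehat G\to B$ to $\widetilde F\to B$ coincide with the one obtained via $F$.

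The principal obstacle I anticipate is the scaffolding bookkeeping. One must verify that the induced scaffoldings $\sigma_G$, $\sigma_{\widehat G}$, and $\sigma_W$ fit together coherently; that the ``opening'' of $P^{a,b}$-edge germs prescribed by Proposition \ref{patchprop} agrees with the marking rules used to define $\sigma_{\widetilde F}$; and that marking information along the straight edges of $V^{a,b}$ propagates consistently through the several pushouts. These checks reduce to applications of coherence together with Corollary \ref{straightcoh}, plus the uniqueness clauses of Proposition \ref{patchprop} and Theorem \ref{th-trim}; they are the substance of the argument, but none of them introduces new geometric content beyond what is already encoded in the cutting-out and pasting-together lemmas.
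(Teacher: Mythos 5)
Your proposal is correct and follows exactly the route the paper intends: the theorem is explicitly presented as the combination of the pasting-together step (Proposition \ref{patchprop} / Corollary \ref{patchcor}, giving the $F\leftrightarrow\widetilde{F}$ correspondence) with the cutting-out step (Theorem \ref{th-trim} applied to $\widetilde{F}=\widehat{G}\cup^{V^{a,b}}P^{a,b}$, giving the $\widehat{G}\leftrightarrow\widetilde{F}$ correspondence), glued together by the chain of pushout identifications you record. The scaffolding bookkeeping you flag at the end is indeed where the real content lies, and your reduction of it to coherence, Corollary \ref{straightcoh}, and the uniqueness clauses is the right way to organize it.
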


The scaffolding $\sigma _{\widehat{G}}$ is never full, because it doesn't say what to do along the new edges
which have been introduced by glueing together the appropriate edges in $\partial W^{a,b}$. This is
the main problem for iterating the construction, and it eventually leads to the need to look for BPS states.

\subsection{Two-manifold constructions}

A {\em two-manifold construction} is a construction $F$ such that the topological space $F(p)$ is a $2$-dimensional
manifold.

\begin{lemma}
A construction $F$ is a two-manifold if and only if its local spherical constructions $F_x$ are polygons.
\end{lemma}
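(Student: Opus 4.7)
The plan is to identify the link of any point $x\in F(p)$ with the geometric realization of the spherical construction $F_x$ viewed as a bipartite graph, and then reduce the $2$-manifold condition to the purely combinatorial statement that this graph is a cycle. Since we are in the $SL_3$ setting, ``polygon'' here means a connected cycle graph with alternating $\nu^+/\nu^-$ vertices, which in the combinatorial picture is the same as saying the graph is connected and every vertex has degree $2$.

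First I would set up the link dictionary. By definition, an element of $F_x(\eta)$ is a germ of a $2$-dimensional sector at $x$, and an element of $F_x(\nu^{\pm})$ is a germ of a $1$-dimensional ray at $x$, with incidence given by which rays bound which sector. Each $2$-sector contributes an arc to a small punctured neighborhood of $x$ in $F(p)$, and each $1$-ray contributes a point at which such arcs are glued, exactly according to the graph structure of $F_x$. Combining the finite-generation and finite-relation hypotheses for constructions with the lemma identifying intersections of enclosures as enclosures, one obtains a finite covering of a neighborhood of $x$ by enclosure-maps whose overlaps are affine, so that the neighborhood is canonically the cone on the geometric realization of $F_x$.

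With the cone description in hand, both implications become formal. If $F(p)$ is a $2$-manifold, then every point has a disk neighborhood, so the link of $x$ must be homeomorphic to $S^1$; the only connected bipartite graphs whose topological realization is $S^1$ are cycles, i.e.\ polygons. Conversely, if each $F_x$ is a polygon, its realization is $S^1$, so a neighborhood of $x$ is a cone on $S^1$, which is a topological disk, and $F(p)$ is a $2$-manifold.

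The main obstacle I expect is rigorously justifying the cone description in the first step: one needs to rule out that arbitrarily small enclosures mapping to $F$ could produce either extra identifications at $x$ not visible on $F_x$, or extra local branches not recorded as germs. The finite-presentation of constructions, together with the standard-covering normal form and the fact that intersections of enclosures are enclosures, should force a locally affine (conical) model near $x$ of the shape specified by $F_x$. Once that local model is secured, the rest of the argument is the straightforward topological characterization of $S^1$ among finite graphs.
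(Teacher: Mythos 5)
Your proof is correct and takes the standard link argument that the paper clearly intends; the paper, per its stated convention on quasi-theorems, omits the proof of this lemma entirely. The identification of a punctured neighborhood of $x$ with (the cone on) the geometric realization of the bipartite graph $F_x$ is indeed the only real content, and you have correctly isolated it as the step requiring the finite-presentation and standard-covering machinery.
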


The polygons have an even number of edges because of the orientations of enclosures.

Suppose $F$ is a two-manifold construction. A point $x\in F(p)$ is a {\em flat point} if $F_x$ is a hexagon; it is
{\em positively curved point} if $F_x$ has two or four sides, and it is a {\em negatively curved point} if
$F_x$ has eight or more sides. Usually we will deal with just three kinds, the rectangular points, the flat or hexagonal points, and the
octagonal points.

We have the following important observation which shows that the two-manifold property is preserved by the
combination of pasting together and cutting out.

\begin{lemma}
In the situation of Theorem \ref{patchcut}, suppose $F$ is a two-manifold construction. Then $\widehat{G}$ is also a
two-manifold construction.
\end{lemma}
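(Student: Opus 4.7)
(Sketch)\,
The plan is to apply the polygon criterion of the preceding lemma: at every $x \in \widehat{G}(p)$, one must exhibit $\widehat{G}_x$ as an even cycle. First I would enumerate the points of $\widehat{G}$ by tracing their preimages under the pushout $G \cup^{\partial W^{a,b}} V^{a,b}$: a point of $G$ not lying on $\partial W^{a,b}$ has $\widehat{G}_x = F_x$, a polygon by hypothesis; an interior point of an edge of $V^{a,b}$ is the identification of two preimages $x_1, x_2$, one on each copy of $\overline{V}^{a,b}$ in $\partial W^{a,b}$; and the three endpoints $A, B, D$ of $V^{a,b}$ are more delicate, since $\pi$ sends $C_1, C_2 \mapsto A$ and swaps $B \leftrightarrow D$ by length-matching of the projection.

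For an interior-edge point, my key step is to decompose each polygon $F_{x_i}$ along the two $1$-dimensional directions of $\partial W^{a,b}$ into a flat $3$-edge arc through the interior of $P_i$ and a complementary $G$-side arc. The projection identifies the two $\partial W^{a,b}$-directions at $x_i$ with the forward and backward directions of $V^{a,b}$ at $x$, so the two $G$-arcs glue along shared endpoints into a single cycle of even length. At the corner $A$, a parallel decomposition applies, with each $P_i$ contributing a $2$-edge arc from its $120^{\circ}$ corner, and the two complementary $G$-arcs at $C_1, C_2$ glueing into an even cycle in $\widehat{G}_A$. At the endpoints $B, D$, the length-matching swap of $\pi$ identifies both $\partial W^{a,b}$-endpoints of a single $G$-side arc with a common direction of $V^{a,b}$, closing that arc into an even cycle.

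The main obstacle I anticipate is ruling out degenerate cycles of length $\leq 2$ in these combined spherical constructions. For interior-edge points, the straightness hypothesis on the $V^{a,b}$-edges (part of standardness) forces the polygon orders of $F$ at interior points of $\partial W^{a,b}$-edges to be at least hexagonal, producing cycles of length at least $6$. For the corner $A$, Lemma~\ref{no2} applied to the initial construction and preserved by the modification procedure gives polygons of order $\geq 4$ at $C_1, C_2$, hence the cycle at $A$ has length $\geq 4$. The most delicate case is at $B$ and $D$, where the resulting cycle has length $k-2$ in terms of the polygon order $k$ at the corresponding corner of $F$; avoiding $2$-cycles here is deduced from the standardness hypothesis, which presupposes that $\widehat{G}$ is a pre-building and so satisfies SPB-loc. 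With every $\widehat{G}_x$ an even cycle, the preceding lemma concludes that $\widehat{G}$ is a two-manifold construction.
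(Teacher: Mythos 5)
The paper states this lemma with no proof at all (consistent with its ``quasi-theorem'' convention), so there is nothing to compare against; your link-by-link analysis is surely the intended argument. Decomposing the polygon $F_x$ at each point of $\partial W^{a,b}$ into the arc contributed by $W^{a,b}$ (three edges at an interior edge point, two edges at each corner) and the complementary $G$-arc, and checking that the projection reassembles the $G$-arcs into cycles, is exactly right, and your three length computations --- $(k_1-3)+(k_2-3)$ at interior edge points, $(k_1-2)+(k_2-2)$ at the apex $C_1,C_2\mapsto C$, and $k-2$ at the two fixed corners --- are correct, with the parity working out in each case.

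The defects are in your handling of the degenerate cases. The claim that straightness ``forces the polygon orders of $F$ at interior points of $\partial W^{a,b}$-edges to be at least hexagonal'' is wrong on two counts: the straightness hypothesis of Theorem \ref{patchcut} concerns the edges of $V^{a,b}$ (the folded edges meeting at the fourfold point), not the glued edges $\partial W^{a,b}$; and straightness in the sense of Definition \ref{straight} only requires a three-edge open arc on \emph{one} side, which the parallelogram side supplies automatically, so it excludes nothing on the $G$-side. Non-hexagonal points on the glued edges are in fact expected --- maximality of the parallelogram is detected precisely by hitting one --- and when two fourfold points face each other your glued cycle has length $2$. This does not invalidate the lemma (a $2$-cycle is still a polygon, and the paper classifies twofold points as positively curved manifold points, excluding them separately in Hypothesis \ref{reductionhyp}), but your ``length at least $6$'' assertion is false. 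The configurations that genuinely destroy the two-manifold property are those where $\widehat{G}_x$ has no edges at all, namely where a point of $\partial W^{a,b}$ is a twofold point of $F$ with its entire link inside $W^{a,b}$; the clean exclusion is the standing assumption of Section \ref{sec-reduction} that the constructions in the reduction process have no twofold points --- not Lemma \ref{no2} ``preserved by the procedure'' (that preservation is precisely what is being assumed, not proved), and not standardness of $V^{a,b}\subset\widehat{G}$, whose scaffolded formulation does not presuppose SPB-loc. Two smaller points: your description of the projection as ``swapping $B\leftrightarrow D$'' is garbled (it fixes the two shared corners of the rectangle and sends $C_1,C_2$ to the apex, matching arm lengths), though your subsequent use of it is correct; and a complete argument would also need to know that $\widehat{G}$ is a construction and that $\widehat{G}(p)$ carries the quotient topology, which the paper itself glosses over.
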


Suppose $F$ is a two-manifold construction with no twofold points. Suppose $x\in F(p)$ is a rectangular point (i.e.\ $F_x$ has
four edges), and suppose $\sigma$ is a full coherent scaffolding such that two of the edge germs at $x$ are in $\sigma ^o$.
Then there exists a copy of $W^{a,b}\subset F$ sending the origin to $x$, and by fullness and coherence of $\sigma$
we may assume that we
are in the situation of
Theorem \ref{patchcut}, meaning that the two edges of $V^{a,b}$ are straight and
the angle between them is open.
We may assume that it is a maximal such copy. The process of Theorem \ref{patchcut}
yields a new two-manifold construction $\widehat{G}$.

\subsection{The reduction process}
\label{sec-reduction}

We may now schematize the reduction process obtained by iterating the above operation.

We are going to define a sequence of two-manifold constructions
$F_i$ with full coherent scaffoldings $\sigma _i$. The first one $F_0$ will be obtained by trimming some
initial construction. We assume that there are never any twofold points.
We will be happy and the sequence will stop if we reach a construction $F_i$ which has no fourfold points,
so it is a pre-building.

Suppose we have constructed the $(F_i,\sigma _i)$ for $i\leq k$. We suppose that $F_k$ has no twofold points,
and that at every rectangular point, two of the edges are marked ``open''.
Choose a rectangular point $x\in F_k$ and let $W^{a,b}\subset F_k$ be a maximal copy with the origin corresponding to $x$,
such that the
edges of $V^{a,b}$ are straight. Applying the pasting together and cutting out process of Theorem \ref{patchcut},
we obtain a new two-manifold construction $F_{k+1}:= \widehat{G}$.

\begin{hypothesis}
\label{reductionhyp}
We assume $\widehat{G}$ has no twofold points,
and that there is a unique way to extend the scaffolding $\sigma_{\widehat{G}}$
to a full coherent scaffolding $\sigma _{k+1}$ on $F_{k+1}$ such that
there are two open edges at any new rectangular points.
\end{hypothesis}

\begin{corollary}
If this hypothesis is satisfied at each stage, then the
reduction process may be defined.
\end{corollary}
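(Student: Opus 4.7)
The plan is to argue by induction on the stage index $k$, showing that the data $(F_k,\sigma_k)$ satisfying the standing conditions (two-manifold construction with full coherent scaffolding, no twofold points, and two ``open'' edges at every rectangular point) can be propagated under Hypothesis \ref{reductionhyp}.

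For the base case, I would take $F_0$ to be the trimmed two-manifold construction produced by Theorem \ref{initialtrim}, equipped with its coherent full scaffolding $\sigma_0$. Absence of twofold points at stage $0$ comes from Lemma \ref{no2} applied to the initial construction, together with the observation that the cutting-out operation of Theorem \ref{th-trim} cannot create twofold points, since every parallelogram removed is attached along two straight edges. The statement that at every rectangular point two of the four edge germs lie in $\sigma_0^o$ is forced by fullness together with the coherence propagation lemma: a coherent scaffolding on a rectangular point cannot mark all four edge germs as ``fold'', so at least two, and hence (by the alternation forced at a fourfold point) exactly two opposite edge germs, are open.

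For the inductive step, assume $(F_k,\sigma_k)$ satisfies the standing conditions, and fix a rectangular point $x\in F_k(p)$. Using that two of the edge germs at $x$ are in $\sigma_k^o$ and the other two are therefore in $\sigma_k^f$, I would produce a local copy of $W^{a,b}$ based at $x$ by transporting the open edges in the two straight directions and gluing the two resulting parallelograms along $V^{a,b}$; this is possible because straight edges propagate their scaffolding marks by Corollary \ref{straightcoh}, so the extension persists as long as the edges remain straight. Taking a maximal such $W^{a,b}\hookrightarrow F_k$ puts us in the situation of Theorem \ref{patchcut}. Applying that theorem produces the two-manifold construction $F_{k+1}:=\widehat{G}$ equipped with an induced (not yet full) scaffolding $\sigma_{\widehat{G}}$. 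Hypothesis \ref{reductionhyp} then furnishes directly both missing pieces: $F_{k+1}$ has no twofold points, and there is a unique full coherent extension $\sigma_{k+1}\supset \sigma_{\widehat{G}}$ with the two-open-edges property at every rectangular point. This closes the induction and defines the sequence $\{(F_i,\sigma_i)\}$.

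The main obstacle I expect is not the bookkeeping of the induction, which runs once the hypothesis is invoked, but rather verifying that the maximal $W^{a,b}$ at $x$ really extends far enough along both straight directions to be usable at each stage, and that the maximality does not conflict with the boundary behavior of the earlier cutting-out. Both should follow from Corollary \ref{straightcoh} (which ensures that straightness, once established, persists up to a ramification point, branch point, or caustic interaction) and from the fact that the cutting-out and pasting-together operations preserve straightness of the edges of $V^{a,b}$. The delicate part, which is precisely where Hypothesis \ref{reductionhyp} is needed rather than proved, is ruling out the creation of new twofold points during the pasting-together and showing that the choice of extension of $\sigma_{\widehat{G}}$ to a full coherent $\sigma_{k+1}$ is uniquely determined; absent this input, one faces exactly the ambiguity of folding direction at a newly-produced fourfold point that motivated the introduction of scaffolding in the first place.
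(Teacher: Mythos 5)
Your proposal is correct and follows essentially the same route the paper intends: the corollary is immediate from the inductive scheme laid out in Section \ref{sec-reduction}, with Theorem \ref{initialtrim} and Lemma \ref{no2} supplying the base case and Theorem \ref{patchcut} together with Hypothesis \ref{reductionhyp} supplying the inductive step. One small correction to your base case: the claim that coherence alone forbids a rectangular point with all four edge germs folded is not what the paper asserts --- Section \ref{sec-progress} explicitly notes that the all-folded configuration \emph{is} admissible from the point of view of coherence and is merely conjectured not to occur; the two-open-edges property at stage $0$ instead comes directly from the explicit description of $\sigma_Z$ in Proposition \ref{initialscaff}, where the open markings at fourfold points are inherited from the non-folding neighborhoods in $X-C$.
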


One of the main ideas which we would like to suggest is that this hypothesis will be true if there
are no BPS states.
How the reduction process works, and the relationship with BPS states,
will be discussed further in Section \ref{sec-progress} below.

\subsection{Recovering the pre-building}
\label{sec-recovering}

Suppose we start with a construction $F$ and scaffolding $\sigma$,
then trim it to obtain a two-manifold construction $F_0$ without twofold points.
Suppose then that Hypothesis \ref{reductionhyp} holds at each stage so we can define the reduction process,
and suppose that the process stops after a finite time with some $F_k$ which has no positively curved points.
Then we would like to say that this allows us to recover a pre-building with a map from $F$. We explain how this
works in the present subsection.

The first stage is to undo the initial trimming.
Put $G_0:=F$. Then apply Theorem \ref{th-trim} successively to obtain a sequence of constructions
$G_i$, with $G_{i-1}$ playing the role denoted $F$ in Theorem \ref{th-trim}.
Hence, $G_{i-1}$
is a pushout of $G_i$ with a standard parallelogram. If this process stops at a trimmed
construction $F_0=G_k$ then we may successively do the pushouts to get $G_{k-1}, G_{k-2},\ldots , G_0=F$.
Thus $F$ is obtained from $F_0$ by a sequence of pushouts with standard parallelograms.

Next, starting with $F_0$ we do a sequence of pasting and cutting operations to yield
two-manifold constructions $F_i$. Here, $F_i$ is obtained from $F_{i-1}$ by
applying Theorem \ref{patchcut}. In particular, we have a construction $\widetilde{F}_{i-1}$
which is on the one hand the result of a pasting operation applied to $F_{i-1}$, but on the
other hand is a pushout of $F_i$ along a standard parallelogram.

Define inductively a sequence of constructions $H_i$, starting with $H_0:= F$, as follows.
At each stage, we will have $F_i \subset H_i$, and $H_i$ is obtained from $F_i$ by a sequence
of pushouts along standard parallelograms
(to be precise this means pushouts along the inclusion $V^{a,b}\hookrightarrow P^{a,b}$).
This holds for $H_0$.

Suppose we know $H_{i-1}$. Then $\widetilde{F}_{i-1}$ is a pasting of $F_{i-1}$
as in Corollary \ref{patchcor}. Do the same pasting to $H_{i-1}$ instead of $F_{i-1}$, in other words
$$
H_i:= H_{i-1} \cup ^{F_{i-1}} \widetilde{F}_{i-1} .
$$
By the inductive hypothesis, $H_{i-1}$ is a pushout of $F_{i-1}$ by a series of
standard parallelograms, thus it follows that $H_i$ is a pushout of $\widetilde{F}_{i-1}$
by the same series. On the other hand,
$\widetilde{F}_{i-1}$ was a pushout of $F_i$ by a standard parallelogram.
Therefore we obtain the inductive hypothesis that $H_i$ is a pushout of $F_i$ along a series
of standard parallelograms.

We have a sequence of maps $H_{i-1}\rightarrow H_i$. These are quotient maps. Composing
them we obtain a series of maps $F\rightarrow H_i$. If $F$ was the initial construction for
$(X,\phi )$ then we would get in this way a $\phi$-harmonic map $X\rightarrow H_i$.

Under the hypothesis that the process leading to a sequence of $F_i$ stops at some finite $k$,
we obtain a construction $H_k$. If $F_k$ has no fourfold points,
we would like to say that $H_k$ is the pre-building; however,
in putting back in the pushouts with standard parallelograms, we might be introducing new fourfold points.
Therefore, as was done previously for the small
object argument, we should apply the folding-in process of Lemma \ref{foldingin}.
Precisely, having $F_k\subset H_k$ obtained by a sequence of pushouts by standard
parallelograms, apply the folding-in process
at each stage to transform the pushout into a good pushout as described in Lemma \ref{foldingin}
preserving the pre-building property. The resulting construction is a pre-building $\widetilde{H}$
accepting a map $F\rightarrow \widetilde{H}$. It is universal for maps to pre-buildings.

\begin{theorem}
Suppose that there are no BPS states and
Conjectures \ref{mainconj} and \ref{finitenessconj} hold. Then we obtain
a pre-building $B^{\rm pre}_{\phi}:=\widetilde{H}$ and a $\phi$-harmonic map
$$
h_{\phi}:X\rightarrow B^{\rm pre}_{\phi}
$$
which is universal for $\phi$-harmonic maps to pre-buildings containing extensions
to the enclosures $E_x$ used for the initial construction. The small object argument
of Theorem \ref{small-object} yields a versal $\phi$-harmonic map to a building.
\end{theorem}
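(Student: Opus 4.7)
The plan is to assemble the machinery of Sections \ref{sec-initial}--\ref{sec-progress} into a single inductive pipeline and then check the universal property by chaining together the one-to-one correspondences established along the way. First I would invoke Theorems \ref{initialcons} and \ref{initialtrim} to obtain the initial construction $Z$ with scaffolding $\sigma _Z$, and its trimmed two-manifold core $(F_0,\sigma _0)=(Z_0,\sigma _Z^{\mathrm{trim}})$, together with the $\phi$-harmonic map $X\rightarrow Z\rightarrow F_0$. By the no-BPS hypothesis and the two cited conjectures, Hypothesis \ref{reductionhyp} holds at every stage of the reduction process of Section \ref{sec-reduction}, so the process yields a finite sequence of two-manifold constructions with full coherent scaffoldings $(F_0,\sigma _0)\rightarrow (F_1,\sigma _1)\rightarrow \cdots \rightarrow (F_k,\sigma _k)$ in which $F_k$ has no fourfold points.

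Next I would build the companion sequence $H_i$ exactly as in Section \ref{sec-recovering}, starting with $H_0:=Z$ and setting $H_i:=H_{i-1}\cup ^{F_{i-1}}\widetilde{F}_{i-1}$. The key inductive claim is that for each $i$, $H_i$ is obtained from $F_i$ by a sequence of pushouts along standard parallelogram inclusions $V^{a,b}\hookrightarrow P^{a,b}$. This reduces to the base case $i=0$ (which is Theorem \ref{initialtrim} read in reverse) combined with the identification of $\widetilde{F}_{i-1}$ as a pushout of $F_i$ along a standard parallelogram that comes out of the cutting-out half of Theorem \ref{patchcut}. Associativity of pushouts then propagates the property from $H_{i-1}$ to $H_i$, and composing the quotient maps $H_{i-1}\rightarrow H_i$ produces compatible maps $F_0\rightarrow H_i$ and, via the initial map, a $\phi$-harmonic map $X\rightarrow H_i$.

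At the end $H_k$ contains $F_k$ (which has no fourfold points) but the extra parallelograms glued on in the construction of $H_k$ might re-introduce fourfold points. To eliminate these I would apply the folding-in process of Lemma \ref{foldingin} at each of the pushout stages of $H_k$ over $F_k$, replacing each naive pushout $F_{i-1}\cup ^{V^{a,b}}P^{a,b}$ by its folded-in version; by Lemma \ref{foldingin} the result of each replacement is still a pre-building, and the universal property there guarantees that any map into a pre-building factors uniquely through the folded-in pushout. Setting $\widetilde{H}:=B^{\mathrm{pre}}_{\phi}$ and composing with the $\phi$-harmonic map $X\rightarrow Z$ gives the desired $h_{\phi}$. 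Universality among $\phi$-harmonic maps to pre-buildings which extend over the $E_x$ is then verified by concatenating four one-to-one correspondences: Theorem \ref{initialcons} (factorization through $Z$), Theorem \ref{initialtrim} (through $F_0$), successive applications of Theorem \ref{patchcut} (through each $F_i$), and Lemma \ref{foldingin} (through each folding-in replacement of a standard-parallelogram pushout). The versal $\phi$-harmonic map to a building is finally produced by applying the small-object argument of Theorem \ref{small-object} to $B^{\mathrm{pre}}_{\phi}$.

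The hard part, as the authors flag at the end of Section \ref{sec-presheaf}, is the universal/versal bookkeeping for the folding-in step: Lemma \ref{foldingin} gives a clean universal property for maps into pre-buildings but only a conjectural versality for maps into buildings, and the relation $R\subset P^{a,b}$ can be quite wild. Consequently the genuinely delicate point in the proof above is checking that after the iterated folding-ins, the composite quotient $F\rightarrow \widetilde{H}$ retains the factorization property for all $\phi$-harmonic maps, rather than only those that respect some finite truncation of the relations $R$. This is also the point at which the convergence (locally finite termination) asserted in the main conjecture is genuinely used, since otherwise one would have to run a transfinite version of the procedure and argue that the associated direct limit still has the pre-building property and the claimed universal property.
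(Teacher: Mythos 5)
Your proposal follows essentially the same route as the paper's own argument in the subsection on recovering the pre-building: trim, run the reduction process under Hypothesis \ref{reductionhyp} (guaranteed by the no-BPS assumption and the two conjectures), build the companion sequence $H_i$ by the same pushout bookkeeping, apply the folding-in process of Lemma \ref{foldingin} to restore the pre-building property, and chain the correspondences of Theorems \ref{initialcons}, \ref{initialtrim}, \ref{patchcut} and Lemma \ref{foldingin} for universality before invoking Theorem \ref{small-object}. You also correctly isolate the same delicate point the authors flag, namely that folding-in gives a clean universal property only for maps to pre-buildings while versality for buildings remains conjectural.
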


The map to the building is only versal. Indeed, at
a stage of the construction of $B^{\rm pre}_{\phi}$
where we add a pushout with a standard parallelogram,
if we are given a map $V^{a,b}\rightarrow B$ which is not itself standard (say for example the
two edges coincide) then the extension to a map $P^{a,b}\rightarrow B$ is not unique.

\section{The BNR example, revisited}
\label{BNRrev}

In order to illustrate the procedure described above, we show how it works in the BNR example from \cite{KNPS}.
It was by considering this example in a new way that we came upon the above procedure.

Recall the picture from \cite[Figure 3]{KNPS} of the spectral network containing two singular points $b_1,b_2$.
At each singularity, there are two spectral network lines which meet lines from the opposite singularity,
and the resulting four segments delimit a diamond-shaped zone in the middle of the picture. There is a
single caustic $C$ joining $b_1$ to $b_2$ across the middle of this region.

Recall that we had found that a $\phi$-harmonic map to a building would fold together this middle region
along the caustic, identifying the two collision points. The resulting pre-building is conical with a single singularity
at the origin. Here, eight sectors form a two-manifold, with the collision being a negatively curved singular
point of total angle $480^{\circ}$. There are two additional sectors, forming a link across the middle of the octagon.
The image of the middle region goes into these sectors but is not surjective due to the folding along the caustic,
see  \cite[Figure 4]{KNPS}.

We now illustrate how this end picture comes about using the process described above. The first step is to create
the initial construction. For this, let us cover the caustic by some regions
as illustrated in Figure \ref{caustic-regions}.

\begin{figure}
\centering
\includegraphics[width=0.9\textwidth]{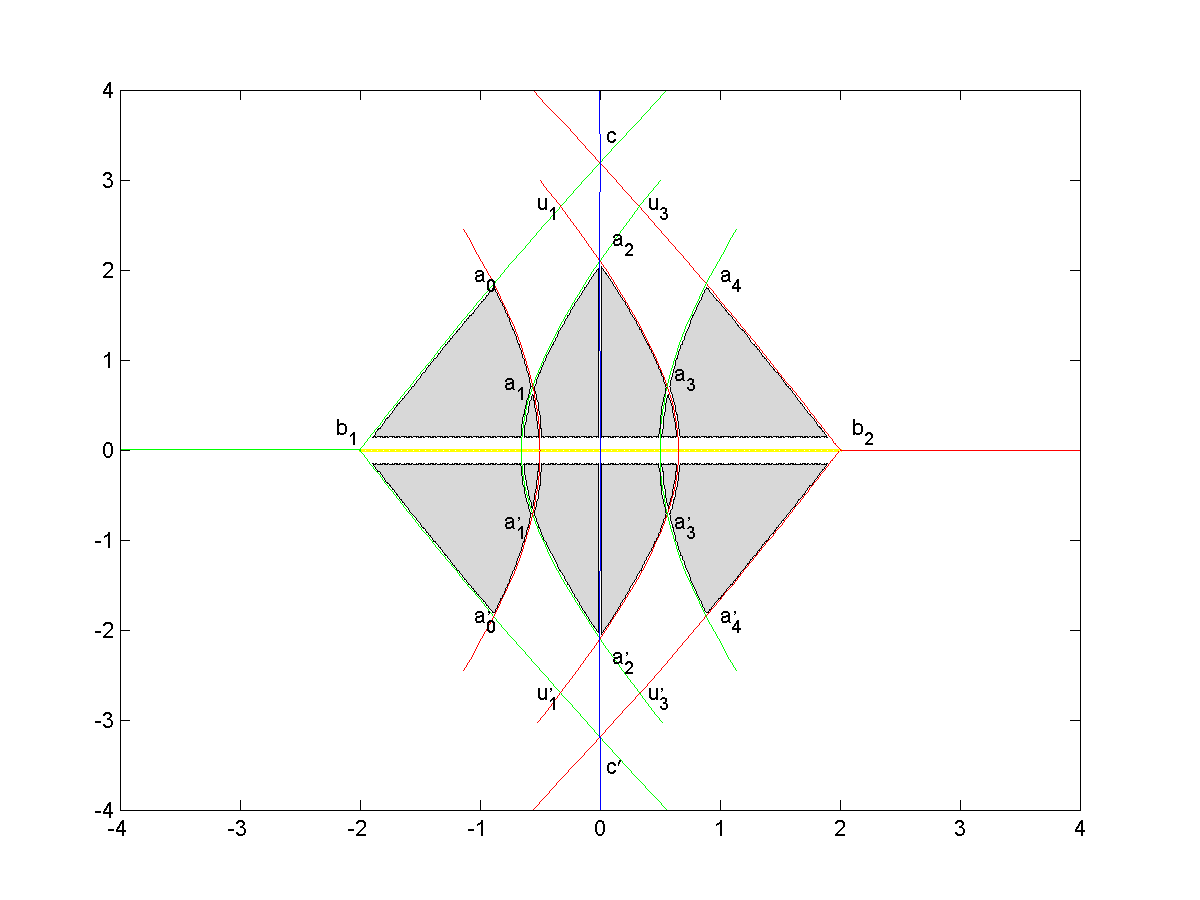}
\caption{Regions covering the caustic}
\label{caustic-regions}
\end{figure}

\medskip

The regions which cover the caustic are the ones which are bounded by the two foliation lines
going from $a_0$ to $a'_0$, resp. $a_2$ to $a'_2$, resp. $a_4$ to $a'_4$. The first and last ones
include the singularities $b_1$ and $b_2$.

Each of these regions is folded under any $\phi$-harmonic map. They are in fact $\Omega _{pq}$ regions (except
at the two endpoints). The image of the regions from Figure \ref{caustic-regions} under
the integration map to an apartment, are shown in Figure \ref{regions-image}.

The enclosures corresponding to these folded pieces are sketched into the picture completing the
shaded regions to full parallelograms.

The initial construction $Z$ consists of glueing together these enclosures covering the caustics, and then
adding enclosures around the remaining points which are not for the moment glued together. The
illustration of Figure \ref{regions-image} may be viewed as a picture of $Z$, where the
non-shaded
regions of the curve
correspond to two different sheets, an upper and a lower one corresponding to the upper and
lower regions in Figure \ref{caustic-regions}.
The completions of parallelograms just below the caustic are part of the initial construction $Z$
(these pieces should be considered
as shaded having only one sheet) but they are not images of points in $X$.
The zone below the dashed line is empty, that is to say it doesn't correspond to any points in $Z$,
in Figure \ref{regions-image}
and similarly for the subsequent ones.

\begin{figure}
\centering
\includegraphics[width=0.9\textwidth]{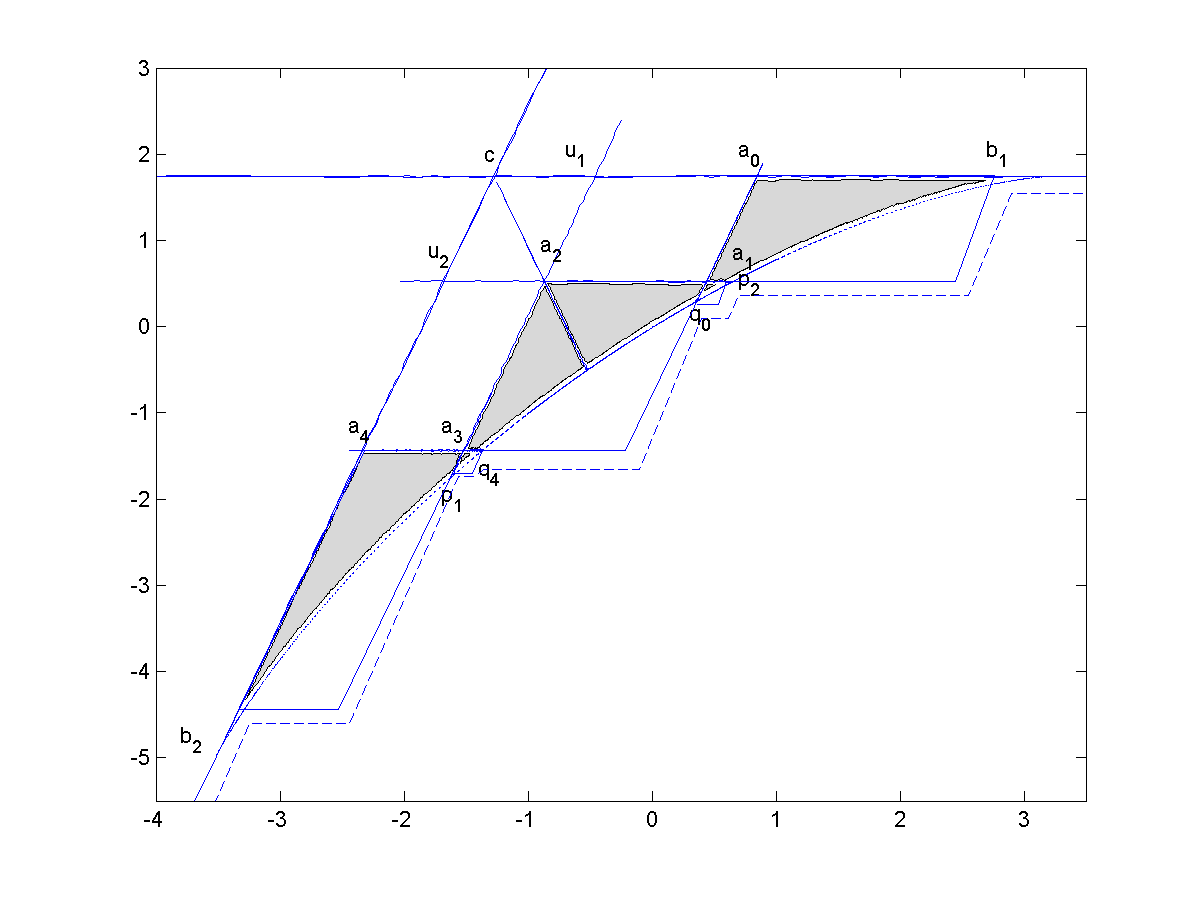}
\caption{Images of the regions}
\label{regions-image}
\end{figure}

\begin{figure}
\centering
\includegraphics[width=0.9\textwidth]{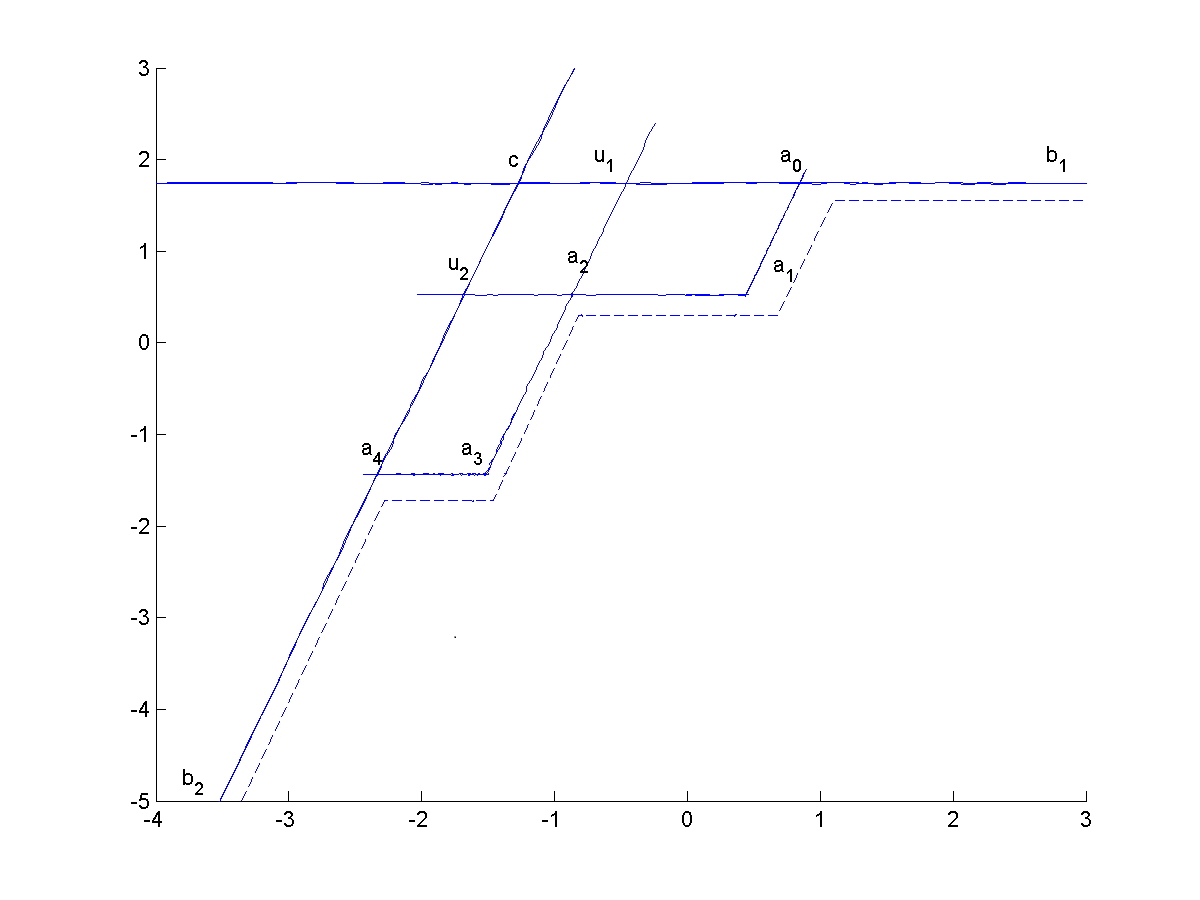}
\caption{After trimming}
\label{trimmed}
\end{figure}

Now $Z$ may be trimmed by taking out the regions containing folded pieces, leaving $Z_0$ as shown in
Figure \ref{trimmed}. Again there are two sheets, which are joined together along the
edge from $b_1$ to $b_2$ (and also along the rays pointing outward from $b_1$ and $b_2$).
In this case the edge where the two sheets are joined corresponds to the dotted line, and as
said above, the zone below the dotted line is empty.

In $Z_0$ we have five points $a_0,a_1,a_2,a_3,a_4$ which are not hexagonal.
The points $a_0,a_2,a_4$ are eightfold whereas $a_1,a_3$ are fourfold points. The segments $b_1a_0$ and
$a_4b_2$ are marked ``open'', due to the trivalent edges in the constructions which are placed at the singularities.
Indeed, on the outer side of $b_1$ one gets an open edge, because we are in the image of a neighborhood in $x$
as was illustrated in Figures \ref{littlenbds} and \ref{cornersegs} .
This open edge then propagates into the
segment $b_1a_0$ by Lemma \ref{straightcoh}, because in $Z_0$ all the points along this edge
are hexagonal, including $b_1$, and up to (but not including) $a_0$. The segment is straight because on either side
we are in the image of neighborhoods from $X$.

We may now show that the four segments $a_0a_1$, $a_1a_2$, $a_2a_3$ and $a_3a_4$ are marked ``fold''.
Consider for example the fourfold point $a_1$. Let $u_1$ and $u'_1$ be the
points which complete the two parallelograms spanned by $a_0, a_1,a_2$.
The four sectors around $a_1$ are $a_1a_0u_1$,
$a_1a_0u'_1$, $a_1a_2u_1$ and $a_1a_2u'_1$. Suppose we are given a map $Z_0\rightarrow B$ to a building,
coming from a $\phi$-harmonic map $h:X\rightarrow B$. The two sectors $a_1a_0u_1$ and $a_1a_2u_1$
come from two adjacent sectors at a point in $X$ (one of the lifts of the point $a_1$). Since the
map $h$ doesn't fold anything in $X-C$, we conclude that there is no folding along the edge $a_1u_1$.
Similarly for $a_1u'_1$. But since there are no cycles of length $\leq 4$ in the local spherical buildings of $B$,
the four sectors have to be collapsed somehow. Therefore, our map must fold along the segments $a_1a_0$
and $a_1a_2$.

The same argument at $a_3$ shows that the map must fold along $a_3a_2$ and $a_3a_4$.

Therefore the four segments $a_0a_1$, $a_1a_2$, $a_2a_3$ and $a_3a_4$ are marked ``fold''.

All other edges of $Z_0$ correspond to edges in $X$, so we conclude that all edges except for these four are marked ``open''.

We may now look at how the pasting-together process works. The first step is to paste together the
two parallelograms $a_0a_1a_2 u_1$ and $a_0a_1a_2 u'_1$. Similarly we paste together
$a_2a_3a_4 u_3$ and $a_2a_3a_4 u'_3$. The result is shown in Figure \ref{nextpaste}.

\begin{minipage}[h]{0.45\textwidth}
\centering
\includegraphics[width=0.9\textwidth]{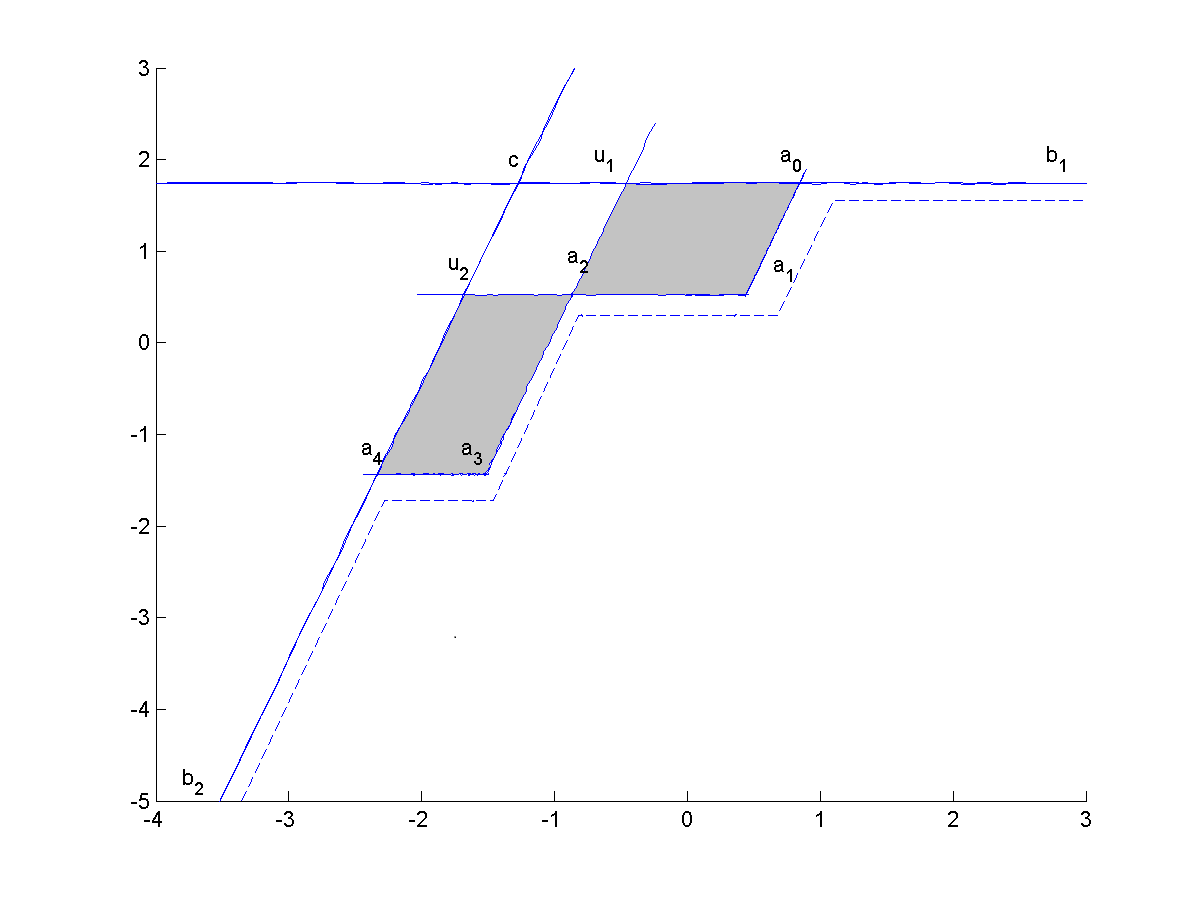}
\captionof{figure}{}
\label{nextpaste}
\end{minipage}
\begin{minipage}[h]{0.45\textwidth}
\centering
\includegraphics[width=0.9\textwidth]{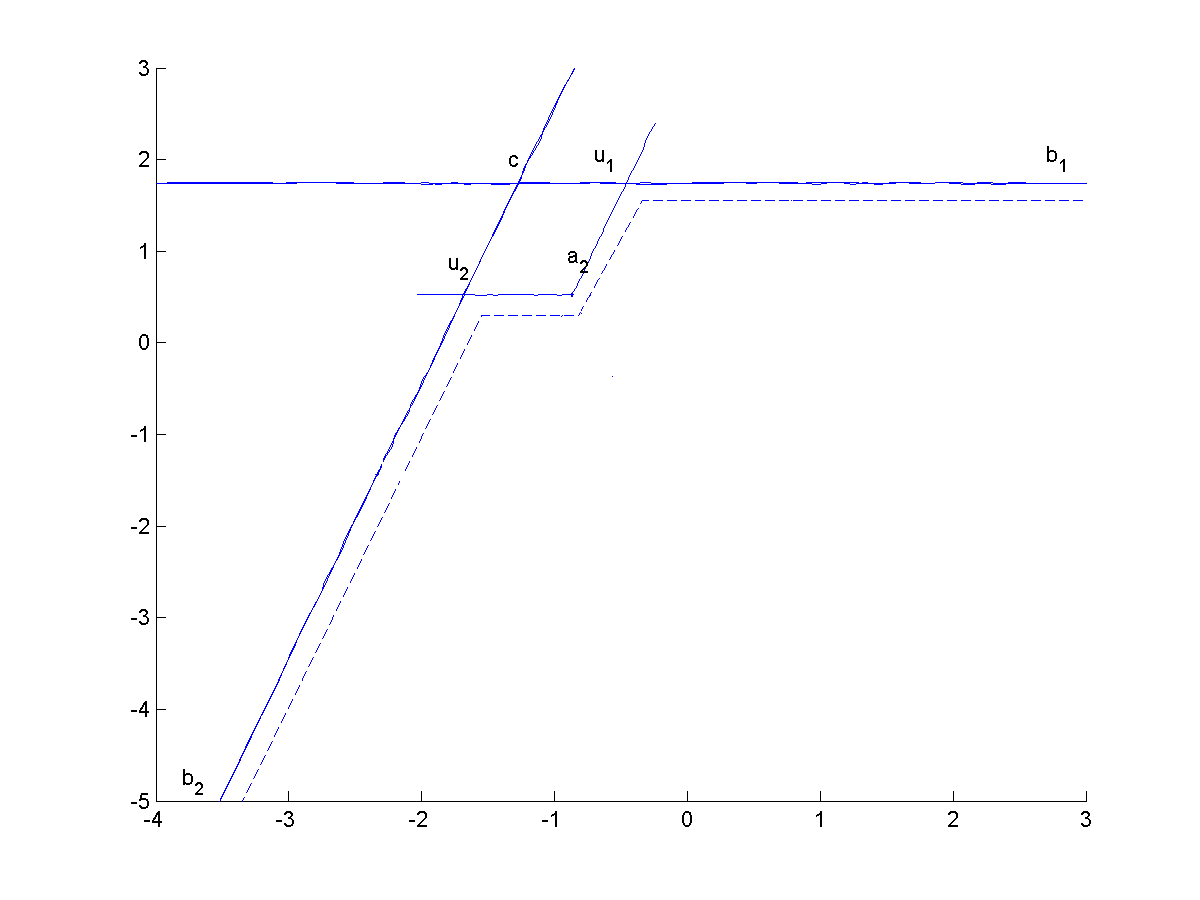}
\captionof{figure}{}
\label{nexttrim}
\end{minipage}

\medskip

Then we can cut out the parallelograms obtained from the above pasting. This gives the picture
shown in Figure \ref{nexttrim} which is
again a two-manifold construction $Z_1$. As before there are two sheets joined along the edge.

Now there is one remaining fourfold point at $a_2$ (notice that what was previously an eightfold point has
now become a fourfold point). Denote by $u_1$ and $u_3$ the images in $Z_1$ of the
pairs of points $(u_1,u'_1)$ and $(u_3,u'_3)$ respectively. By the same reasoning as before, the
segments $a_2u_1$ and $a_2u_3$ are marked ``fold'' whereas all the other edges are marked ``open''.

The next and last step of the pasting-together process is to paste together the two parallelograms
$u_1a_2u_3c$ and $u_1a_2u_3c'$ where $c$ and $c'$ are the two collision points from $X$. This gives
Figure \ref{lastpaste}.

\begin{minipage}[h]{0.45\textwidth}
\centering
\includegraphics[width=0.9\textwidth]{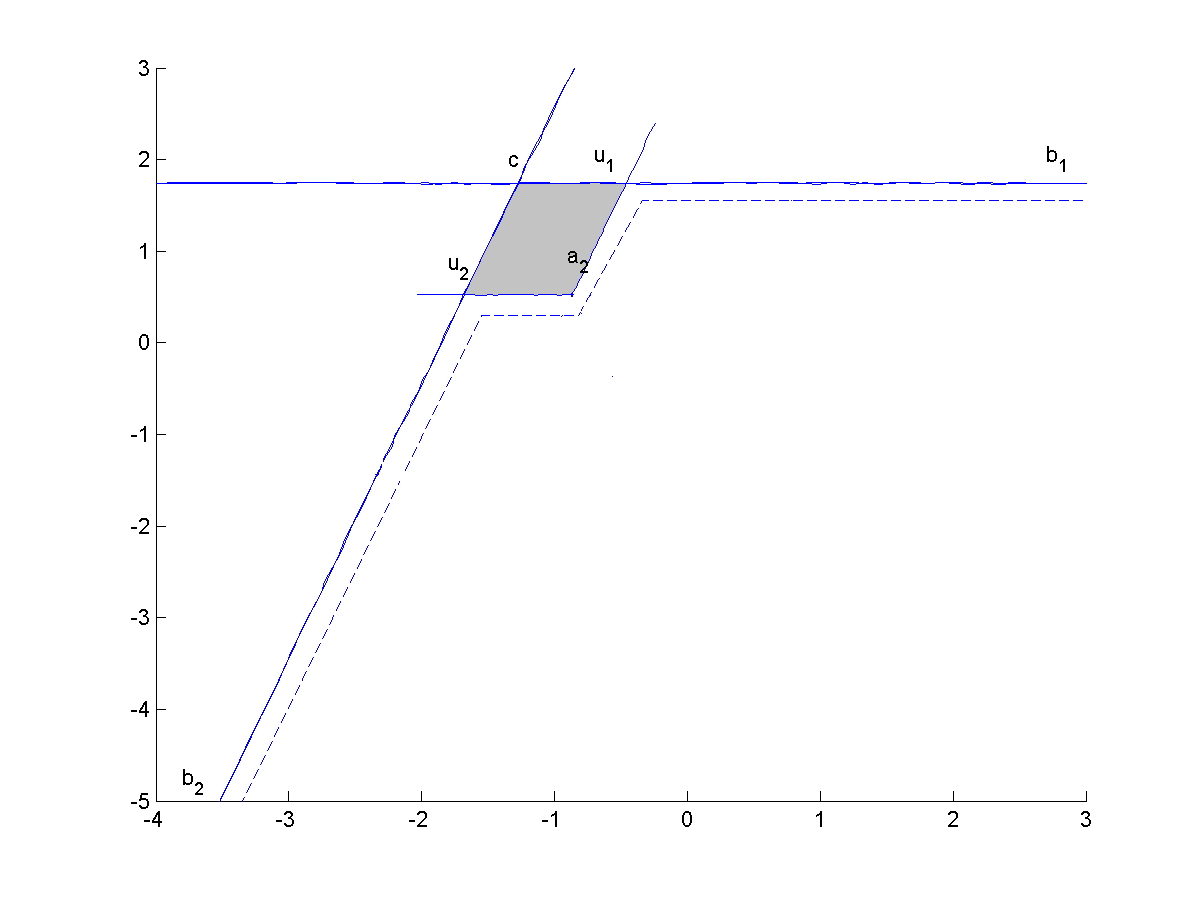}
\captionof{figure}{}
\label{lastpaste}
\end{minipage}
\begin{minipage}[h]{0.45\textwidth}
\centering
\includegraphics[width=0.9\textwidth]{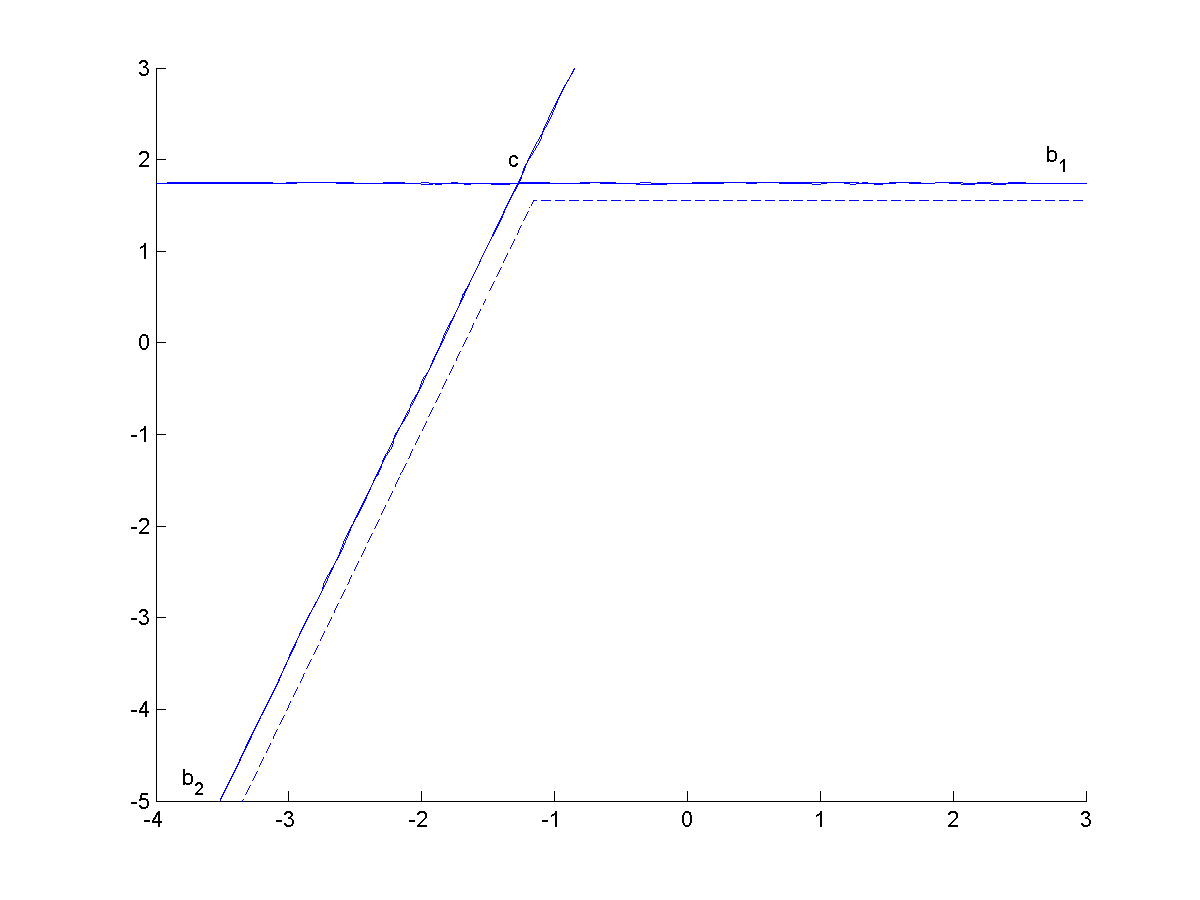}
\captionof{figure}{}
\label{lasttrim}
\end{minipage}

\medskip

Cutting out the parallelogram obtained from the pasting-together, gives the two-manifold construction $Z_2$ shown in Figure \ref{lasttrim}.

It has only a single eightfold point at the image of the two collision points which are now identified. There are no
fourfold points. Therefore we may back up and add back in all of the parallelograms which were removed by the
cutting-out processes.

This gives the picture shown in Figure \ref{bnrPB}
which is the universal pre-building accepting a map from $X$. Here as before, the unshaded regions
above and to the left correspond to two sheets; the shaded regions as well as the
remaining pieces of the parallelograms just next to the dotted line, are single sheets,
and everything below the dotted line is empty.

\begin{minipage}[h]{0.85\textwidth}
\centering
\includegraphics[width=0.9\textwidth]{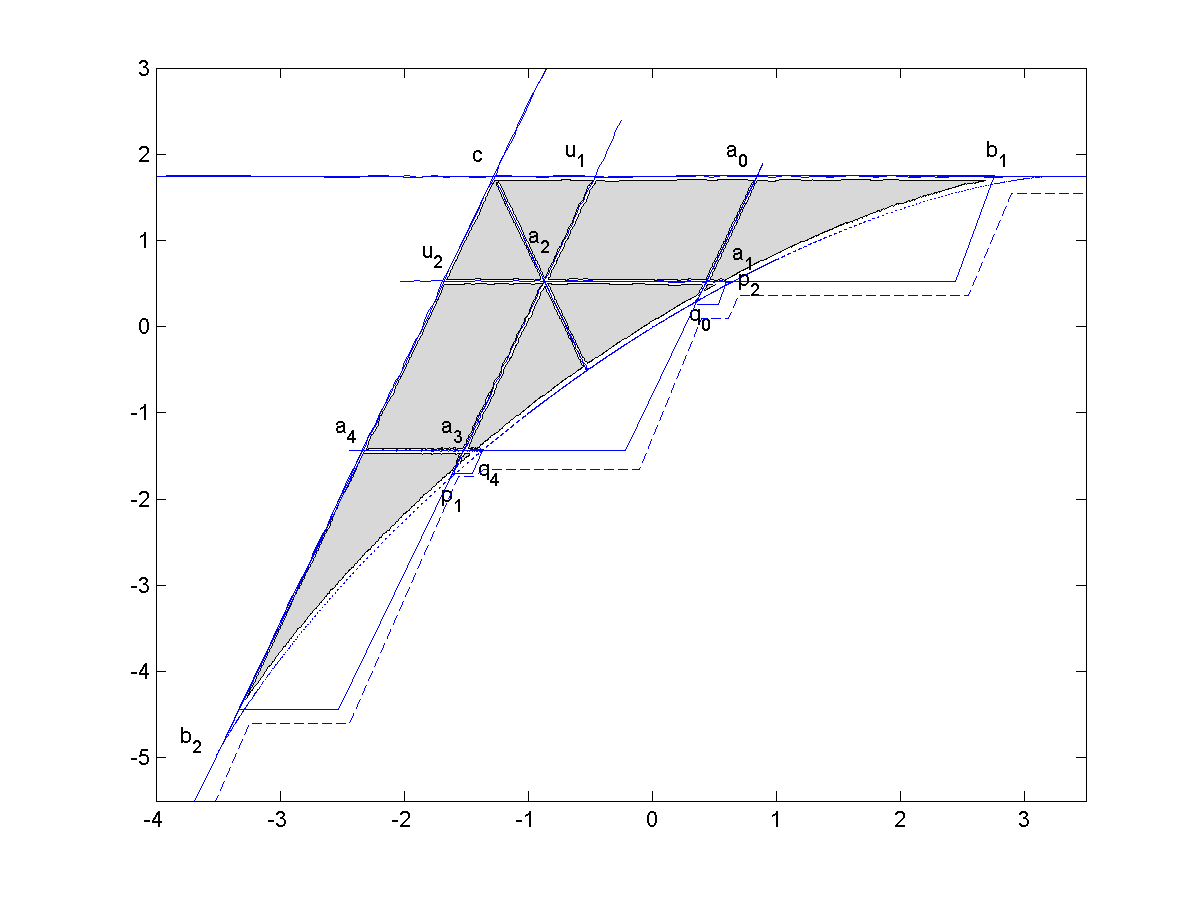}
\captionof{figure}{The pre-building}
\label{bnrPB}
\end{minipage}

\medskip

When we turn this into a building using the small object argument, we get back the universal building constructed in \cite{KNPS}.

This was a first illustration of how the reduction process introduced in Section \ref{sec-reduction}
leads to a universal pre-building.
In the next section we discuss some further aspects of the process in the general case.

\section{The process in general}
\label{sec-progress}

In general we have the following setup: starting with $(X,\phi )$ we first make the initial construction $Z$, then
trim it to get a two-manifold construction $Z_0$. Then we go through a sequence of steps of pasting-together
then cutting-out described in
Section \ref{sec-reduction}. This yields a sequence of two-manifold constructions $Z_i$
(these were denoted $F_i$ in Section \ref{sec-modifying} but we change to the notation $Z_i$ here in
order to think of them as modifications of the original surface $Z$). The construction $Z_0$
has a coherent full scaffolding by Lemma \ref{initialscaff}.  In order to proceed with the construction
at each step, we would like to know that after each operation the new $Z_i$ may still be given a uniquely
defined full coherent scaffolding, which is Hypothesis \ref{reductionhyp}.

If this hypothesis holds at each stage, then we can continue the operation and hope that it
converges locally at least. In this section we explain some ideas for how to understand more precisely the
sequence of constructions, and how to see Hypothesis \ref{reductionhyp} saying that
full scaffoldings will be determined at each step, if there are no BPS states.

\subsection{Properties of scaffoldings}
Let us first axiomatize some properties of our two-manifold constructions and their scaffoldings.

Our two-manifold constructions $Z_i$ are provided with scaffoldings in which
almost all of the edges are marked ``open'' or $o$. Those vertices in the local spherical buildings which are marked ``fold'' or $f$, are parts of straight edges (that is, edges such that at each point, at least one side of the edge has both
directions transverse to that edge, marked $o$). The marking is the same along the straight edge.

\noindent
{\bf Fourfold points}---
We require that, at any fourfold point there should be two opposite edges marked $o$, and two opposite edges
marked $f$:

$$
\setlength{\unitlength}{.3mm}
\begin{picture}(200,120)

\put(100,60){\circle*{2}}

\put(100,60){\line(1,0){50}}
\put(100,60){\line(-1,0){50}}

\linethickness{.5mm}
\put(100,60){\line(0,1){50}}
\put(100,60){\line(0,-1){50}}
\thinlines

\put(95,112){$f$}
\put(97,0){$f$}
\put(157,57){$o$}
\put(40,57){$o$}

\end{picture}
$$

The configuration with all four edges folded is admissible from the point of
view of coherence, but throughout our procedure we conjecture that it should not occur.

\noindent
{\bf Hexagonal points}---
At a hexagonal point, the possibilities are as follows. First, either all edges are open
or two opposite edges are folded.

\begin{minipage}[h]{0.45\textwidth}
$$
\setlength{\unitlength}{.3mm}
\begin{picture}(200,120)

\put(100,60){\circle*{2}}

\put(100,60){\line(1,0){50}}
\put (157,57){$o$}

\put(100,60){\line(-1,0){50}}

\put(40,57){$o$}

\qbezier(100,60)(113,81)(126,102)

\put(128,104){$o$}

\qbezier(100,60)(87,81)(74,102)
\put(64,104){$o$}

\qbezier(100,60)(113,39)(126,18)

\put(128,10){$o$}

\qbezier(100,60)(87,39)(74,18)
\put(64,10){$o$}

\end{picture}
$$
\end{minipage}
\begin{minipage}[h]{0.45\textwidth}
$$
\setlength{\unitlength}{.3mm}
\begin{picture}(200,120)

\put(100,60){\circle*{2}}

\linethickness{.5mm}
\put(100,60){\line(1,0){50}}
\thinlines
\put (157,57){$f$}

\linethickness{.5mm}
\put(100,60){\line(-1,0){50}}
\thinlines
\put(40,57){$f$}

\qbezier(100,60)(113,81)(126,102)

\put(128,104){$o$}

\qbezier(100,60)(87,81)(74,102)
\put(64,104){$o$}

\qbezier(100,60)(113,39)(126,18)

\put(128,10){$o$}

\qbezier(100,60)(87,39)(74,18)
\put(64,10){$o$}

\end{picture}
$$
\end{minipage}

\medskip

\noindent
It is also possible to have three $o$ and three $f$
alternating; this configuration should actually be considered as a superposition of a $4v$ point
plus an $8v$ point starting a new postcaustic curve. Furthermore, one
additional edge can also be folded.

\begin{minipage}[h]{0.45\textwidth}
$$
\setlength{\unitlength}{.3mm}
\begin{picture}(200,120)

\put(100,60){\circle*{2}}

\put(100,60){\line(1,0){50}}
\put (157,57){$o$}

\linethickness{.5mm}
\put(100,60){\line(-1,0){50}}
\thinlines
\put(40,57){$f$}

\linethickness{.5mm}
\qbezier(100,60)(113,81)(126,102)
\thinlines
\put(128,104){$f$}

\qbezier(100,60)(87,81)(74,102)
\put(64,104){$o$}

\linethickness{.5mm}
\qbezier(100,60)(113,39)(126,18)
\thinlines
\put(128,10){$f$}

\qbezier(100,60)(87,39)(74,18)
\put(64,10){$o$}

\end{picture}
$$
\end{minipage}
\begin{minipage}[h]{0.45\textwidth}
$$
\setlength{\unitlength}{.3mm}
\begin{picture}(200,120)

\put(100,60){\circle*{2}}

\linethickness{.5mm}
\put(100,60){\line(1,0){50}}
\thinlines
\put (157,57){$f$}

\linethickness{.5mm}
\put(100,60){\line(-1,0){50}}
\thinlines
\put(40,57){$f$}

\linethickness{.5mm}
\qbezier(100,60)(113,81)(126,102)
\thinlines
\put(128,104){$f$}

\qbezier(100,60)(87,81)(74,102)
\put(64,104){$o$}

\linethickness{.5mm}
\qbezier(100,60)(113,39)(126,18)
\thinlines
\put(128,10){$f$}

\qbezier(100,60)(87,39)(74,18)
\put(64,10){$o$}

\end{picture}
$$
\end{minipage}

\medskip

Finally, a hexagonal point can have all edges folded. Again, we conjecture that this doesn't appear
in our procedure.

\noindent
{\bf Eightfold points}---
At an eightfold point, there are the following main possibilities. First, a BNR point
with all edges open. Then, a point with a single fold line,
as happens at the end of a postcaustic curve, initially coming from a
singularity.

\begin{minipage}[h]{0.45\textwidth}
$$
\setlength{\unitlength}{.3mm}
\begin{picture}(200,120)

\put(100,60){\circle*{2}}

\put(100,60){\line(0,1){50}}
\put(97,112){$o$}

\qbezier(100,60)(120,80)(140,100)
\put(143,99){$o$}

\put(100,60){\line(1,0){50}}
\put(157,57){$o$}

\qbezier(100,60)(120,40)(140,20)
\put(143,14){$o$}

\put(100,60){\line(0,-1){50}}
\put(97,0){$o$}

\qbezier(100,60)(80,40)(60,20)
\put(52,14){$o$}

\put(100,60){\line(-1,0){50}}
\put(40,57){$o$}

\qbezier(100,60)(80,80)(60,100)
\put(52,99){$o$}

\end{picture}
$$
\end{minipage}
\begin{minipage}[h]{0.45\textwidth}
$$
\setlength{\unitlength}{.3mm}
\begin{picture}(200,120)

\put(100,60){\circle*{2}}

\linethickness{.5mm}
\put(100,60){\line(0,1){50}}
\thinlines
\put(95,112){$f$}

\qbezier(100,60)(120,80)(140,100)
\put(143,99){$o$}

\put(100,60){\line(1,0){50}}
\put(157,57){$o$}

\qbezier(100,60)(120,40)(140,20)
\put(143,14){$o$}

\put(100,60){\line(0,-1){50}}
\put(97,0){$o$}

\qbezier(100,60)(80,40)(60,20)
\put(52,14){$o$}

\put(100,60){\line(-1,0){50}}
\put(40,57){$o$}

\qbezier(100,60)(80,80)(60,100)
\put(52,99){$o$}

\end{picture}
$$
\end{minipage}

\medskip

\noindent
There are two ways of having a post-caustic pass through the eightfold point, either
two opposite fold lines, as is most standard; or
two adjacent fold lines,.

\begin{minipage}[h]{0.45\textwidth}
$$
\setlength{\unitlength}{.3mm}
\begin{picture}(200,120)

\put(100,60){\circle*{2}}

\linethickness{.5mm}
\put(100,60){\line(0,1){50}}
\thinlines
\put(95,112){$f$}

\qbezier(100,60)(120,80)(140,100)
\put(143,99){$o$}

\put(100,60){\line(1,0){50}}
\put(157,57){$o$}

\qbezier(100,60)(120,40)(140,20)
\put(143,14){$o$}

\linethickness{.5mm}
\put(100,60){\line(0,-1){50}}
\thinlines
\put(97,0){$f$}

\qbezier(100,60)(80,40)(60,20)
\put(52,14){$o$}

\put(100,60){\line(-1,0){50}}
\put(40,57){$o$}

\qbezier(100,60)(80,80)(60,100)
\put(52,99){$o$}

\end{picture}
$$
\end{minipage}
\begin{minipage}[h]{0.45\textwidth}
$$
\setlength{\unitlength}{.3mm}
\begin{picture}(200,120)

\put(100,60){\circle*{2}}

\linethickness{.5mm}
\put(100,60){\line(0,1){50}}
\thinlines
\put(95,112){$f$}

\linethickness{.5mm}
\qbezier(100,60)(120,80)(140,100)
\thinlines
\put(143,99){$f$}

\put(100,60){\line(1,0){50}}
\put(157,57){$o$}

\qbezier(100,60)(120,40)(140,20)
\put(143,14){$o$}

\put(100,60){\line(0,-1){50}}
\put(97,0){$o$}

\qbezier(100,60)(80,40)(60,20)
\put(52,14){$o$}

\put(100,60){\line(-1,0){50}}
\put(40,57){$o$}

\qbezier(100,60)(80,80)(60,100)
\put(52,99){$o$}

\end{picture}
$$
\end{minipage}

\medskip

\noindent
Notice that when the two adjacent fold lines get folded up, the image is a hexagon, with one sector covered three times by passing over and back.

This picture with two adjacent fold lines comes up, in particular,
in the initial construction when we have a crossing of two caustic lines. It can also
arise sometime later in the procedure.

\noindent
{\em Other eightfold points}---several other configurations are also admissible from the point of view of
coherence, although we feel that they probably will not occur in the procedure. It is left to the reader to
enumerate these.

This finishes the description of the expected properties of open and fold markings of our scaffolding.
The set of possible local pictures listed above covers the situations which we meet along the various
steps of the process. A scaffolding of a two-manifold construction
will be called {\em admissible} if its local pictures are in the above list (completed as per the preceding paragraph).

\begin{proposition}
If $Z_i$ is a two-manifold construction with a full coherent scaffolding, then
the scaffolding is admissible.
\end{proposition}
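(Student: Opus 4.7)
The plan is to fix an arbitrary point $x \in Z_i(p)$ and show that the $f/o$ marking of edge germs at $x$ falls into the enumerated list. By coherence there exists a building $B$ and a map $h\colon Z_i \to B$ compatible with $\sigma$; this induces a map of spherical constructions $(Z_i)_x \to B_{h(x)}$ landing in a spherical building, which by Definition \ref{def-building} contains no cycles of length $\leq 4$. Since $Z_i$ is a two-manifold construction, $(Z_i)_x$ is a $2k$-cycle for some $k \geq 2$, and the $f$-marked edge germs are precisely those at which the two adjacent edges of the cycle are identified under $h$. For eightfold and higher valence points, the catch-all clause (``several other configurations are also admissible from the point of view of coherence'') makes admissibility tautological, so the real work is concentrated in the cases $k=2$ and $k=3$.

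For fourfold points ($k=2$), the 4-cycle of $(Z_i)_x$ cannot embed into $B_{h(x)}$, so at least one $f$-marking must occur. A short case check shows that folding a single edge germ still leaves a cycle of length $\leq 4$ in the quotient, so at least two edge germs are folded. If these two $f$-marked germs were adjacent, then at the remaining two vertices the two outgoing edges of the 4-cycle would have to be identified with an already-folded edge, and applying straight-edge propagation (Corollary \ref{straightcoh}) together with fullness one derives a contradiction with the local structure forced by coherence at the neighbouring hexagonal points. The only remaining configuration — and therefore the one which is realized — is two opposite $f$'s separated by two opposite $o$'s, which is exactly the listed pattern.

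For hexagonal points ($k=3$), the plan is to enumerate the possible quotients of a hexagonal apartment inside the spherical building $B_{h(x)}$ of type $A_2$. The map $(Z_i)_x \to B_{h(x)}$ is either an embedding, yielding the all-$o$ pattern, or folds along one of the symmetry axes of the hexagon. Folding along a diameter identifies two pairs of opposite vertices and gives the ``two opposite $f$'s'' pattern. Folding along three alternating edges simultaneously is the only other way to avoid creating a short cycle in the quotient, and it produces the alternating ``three-$o$/three-$f$'' pattern; a further compatible identification yields the ``four-$f$'' variant. Any other labeling of $f/o$ either produces a cycle of length $\leq 4$ in the image of $(Z_i)_x$, violating the spherical building axiom, or fails to be compatible with the marking inherited along straight edges via Corollary \ref{straightcoh}. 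This pins down exactly the four listed configurations.

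The main obstacle is the enumeration at hexagonal points, where the constraints (no cycles of length $\leq 4$, straight-edge propagation, fullness, and the intrinsic dihedral symmetry of the hexagon) must be balanced simultaneously to rule out hybrid patterns that are not in the list. Once this combinatorial analysis is carried out, the rectangular case reduces to a short argument as sketched, and the eightfold case is absorbed into the admissible list by the catch-all clause, completing the proof.
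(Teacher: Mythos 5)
The paper itself gives no proof of this proposition (recall the authors' convention that all statements are ``quasi-theorems''), and the notion of \emph{admissible} is defined as membership in a list that is explicitly ``completed'' by all further configurations that are ``admissible from the point of view of coherence.'' Your overall strategy --- use coherence to produce a compatible map $h\colon Z_i\to B$, observe that by fullness the marking at $x$ is exactly the fold/open pattern of the induced map of the polygon $(Z_i)_x$ into the spherical building $B_{h(x)}$, and then enumerate the patterns consistent with the absence of cycles of length $\leq 4$ --- is surely the intended argument, and the reduction of the higher-valence cases to the catch-all clause is a fair reading of the definition. So the route is right.

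Two points need repair. First, your fourfold-point analysis overreaches: a fold at one vertex of the $4$-cycle forces a fold at the opposite vertex (the two remaining edges would otherwise form a $2$-cycle in $B_{h(x)}$), so two adjacent $f$'s simply force all four germs to be $f$ --- and the paper states explicitly that the all-folded configuration \emph{is} ``admissible from the point of view of coherence.'' The contradiction you claim to derive from straight-edge propagation therefore cannot exist; coherence alone does not exclude that case, it only fails to occur conjecturally in the procedure. Fortunately this does not damage the proposition, since both surviving patterns (two opposite $f$'s, and all four $f$'s) lie in the completed admissible list; but as written your case analysis asserts something false. Second, the hexagonal enumeration, which you correctly identify as the real content, is only gestured at: you need to actually verify, for each of the $2^6$ markings up to symmetry, either that the forced identifications produce a cycle of length $\leq 4$ (e.g.\ a single $f$, or two adjacent $f$'s with the rest open) or that the pattern appears in the list; the ``diameter'' and ``alternating'' foldings you name do give the listed patterns, but the exclusion of the hybrid markings is where the work lies and it is not carried out. (You may also wish to note that twofold points, which the statement does not formally exclude, are handled by Lemma 5.4 and the standing hypotheses of the process.)
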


\begin{corollary}
In the two-manifold construction obtained by trimming the initial construction, and then
in the two-manifold constructions which are met along the way of our process (assuming that
there are no BPS states and Hypothesis \ref{reductionhyp} holds), the full coherent
scaffolding is always admissible.
\end{corollary}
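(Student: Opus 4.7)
The plan is a direct induction on the step $i$, with the preceding Proposition doing essentially all of the local work. The only thing to verify at each stage is that the two-manifold construction in question carries a full coherent scaffolding, after which that Proposition delivers admissibility of the local pictures.

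For the base case $i=0$, the trimmed initial construction $Z_0$ comes equipped with the full coherent scaffolding $\sigma_0$ produced by Theorem \ref{initialtrim} and described concretely in Proposition \ref{initialscaff}: all edge germs coming from the enclosures $E_x$ over non-caustic, caustic smooth, caustic-crossing, and branch-point neighborhoods are marked open, and the finitely many remaining straight segments attached to fourfold points receive their ``fold'' marking forced by coherence once two opposite germs at the fourfold point are already open. Applying the preceding Proposition to $(Z_0,\sigma_0)$ then yields admissibility in the base case.

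For the inductive step, suppose $(Z_i,\sigma_i)$ has a full coherent scaffolding. The construction $Z_{i+1}$ is obtained by choosing a maximal $W^{a,b}\subset Z_i$ at a rectangular point with two open edges along $V^{a,b}$ and applying the pasting-then-cutting procedure of Theorem \ref{patchcut}, giving a partial scaffolding $\sigma_{\widehat G}$ on $Z_{i+1}=\widehat G$. Hypothesis \ref{reductionhyp}, which we are assuming, asserts precisely that $\sigma_{\widehat G}$ extends uniquely to a full coherent scaffolding $\sigma_{i+1}$ on $Z_{i+1}$ and that no twofold points appear. A second application of the preceding Proposition now gives admissibility of $(Z_{i+1},\sigma_{i+1})$, closing the induction.

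The main obstacle does not lie in the corollary itself but in the preceding Proposition it invokes: one must check that the enumerated local pictures at fourfold, hexagonal, and eightfold points exhaust all configurations allowed by coherence. This would be done by a finite case analysis on the local spherical construction, ruling out exotic patterns (for example a fourfold point with all four edges folded, or certain fully folded hexagonal configurations) by exhibiting an incompatibility with the existence of any compatible map to a building. Once that enumeration is in place, the corollary is simply the iterated application of the Proposition described above, with the no-BPS-state hypothesis entering only through Hypothesis \ref{reductionhyp}.
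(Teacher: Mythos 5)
Your proposal is correct and matches the paper's intended argument: the corollary is an immediate consequence of the preceding Proposition, applied first to $(Z_0,\sigma_0)$ (whose full coherent scaffolding comes from Theorem \ref{initialtrim} and Proposition \ref{initialscaff}) and then inductively to each $(Z_{i+1},\sigma_{i+1})$, where Hypothesis \ref{reductionhyp} supplies the full coherent scaffolding at every subsequent stage. You also correctly locate the real content in the Proposition's finite case analysis of local spherical configurations rather than in the corollary itself.
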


\subsection{Post-caustics}

It is a consequence of the above list of local pictures allowed for an admissible scaffolding, that the collection of points which
have an edge marked ``fold'' in the scaffolding is organized into a collection of piecewise linear
curves marked as folded edges. Furthermore, it follows that the
endpoints of the curves are $8v$ points (or $6v$ triple points), and along a curve the successive segments
have endpoints which alternate as $8v, 4v, 8v, \ldots , 4v, 8v, 4v, 8v$. All points outside of these
are hexagonal i.e. flat.

Call these curves ``post-caustic'' curves, because they arise in the initial construction as approximations
to the caustics.

We may think of $Z_i$ as a locally flat surface but with singularities: positive curvature at the $4v$ points (total angle $240^{\circ}$)
and negative
curvature (total angle $480^{\circ}$) at the $8v$ points. The total curvature of a post-caustic curve is equivalent to one
$8v$ vertex.

\subsection{Determining new markings}

The next step is to start trying to remove the fourfold points by using the pasting-together
construction of Section \ref{sec-pasting}. Suppose we have gotten to a two-manifold construction $Z_i$.

At a fourfold point we choose a
standard pair of parallelograms to be glued together.
It may be seen from the $\Omega _{pq}$ argument that the two parallelograms must go to the same place in
any harmonic map to a building.
Here, by a standard parallelogram we mean one in which all of the interior edges, including interior edges
along the sides, are
labeled as open. Thus in a map to a building the paths which look noncritical, must actually map to noncritical paths. This applies in particular to the two outer edges which are common to both parallelograms.
They form a noncritical path joining the two endpoints of the parallelograms $p$ and $q$, and
the $\Omega _{pq}$ argument works as usual. Both parallelograms are swept out by noncritical paths so
the two images in any map to a building must be identified.

Let $Y_{i+1}$ denote the result of pasting together the two parallelograms.  The next two-manifold
construction $Z_{i+1}$ is then obtained by cutting out the resulting parallelogram from $Y_{i+1}$.

Denote by the ``common edges'' of the parallelograms the two edges which are common and which start from
the fourfold point; the ``glued edges'' are the other two edges of each parallelogram.

We assume that at least one of the two edge lengths of the parallelogram is maximal. It means that
a non-hexagonal point, or possibly a hexagonal point with nontrivial marking, was encountered along
some glued edge of at least one of the parallelograms.

In the new two-manifold construction $Z_{i+1}$, the only edges which don't benefit from the
previous scaffolding are the two glued edges. Therefore, the main problem is how to determine the
new marking along these edges. We illustrate this in the next section.

\subsection{A sample step}
\label{sec-example}

Let us consider an example of pasting with a singularity on the edge.
We draw this in Figure \ref{pasting}.
The two parallelograms to be pasted together are $PAQB$ and $PAQB'$.
They share two common edges, $PA$ and $QA$. These edges come together at a fourfold
point $A$, and the other two edges at $A$ are marked ``open'', so the edges $PA$ and $QA$
are marked ``fold''; hence they have been drawn as thick lines.  The interior edges of
both parallelograms are all marked ``open'', in particular the four sides of
each one are straight. Thus, the two parallelograms
must be mapped to the same place under any harmonic map to a building, and we will paste them together
and cut them out. Pasting together identifies the points $B$ and $B'$, we denote the image just by $B$ too.
After cutting out the parallelogram we get a new two-manifold shown in Figure \ref{newtwo}

\begin{figure}
\begin{minipage}[h]{0.45\textwidth}
$$
\setlength{\unitlength}{.3mm}
\begin{picture}(200,120)

\put(40,110){\circle*{2}}
\put(120,110){\circle*{2}}
\put(94,29){\circle*{2}}
\put(174,29){\circle*{2}}
\put(70,65){\circle*{2}}

\put(101,34){\circle*{2}}

\linethickness{.5mm}
\qbezier(20,110)(20,110)(120,110)
\qbezier(120,110)(120,110)(180,20)
\thinlines

\qbezier(40,110)(40,110)(94,29)
\qbezier(94,29)(94,29)(174,29)

\qbezier(40,110)(40,110)(101,34)
\qbezier(101,34)(174,29)(174,29)

\qbezier(70,65)(70,65)(80,65)
\qbezier(70,65)(70,65)(76,74)

\qbezier(70,65)(70,65)(70,56)
\qbezier(70,65)(70,65)(65,58)
\qbezier(70,65)(70,65)(61,63)
\qbezier(70,65)(70,65)(63,70)

\put(38,112){\ensuremath{\scriptstyle P}}
\put(120,112){\ensuremath{\scriptstyle A}}
\put(175,31){\ensuremath{\scriptstyle Q}}
\put(50,58){\ensuremath{\scriptstyle R}}

\put(85,26){\ensuremath{\scriptstyle B}}

\put(104,35){\ensuremath{\scriptstyle B'}}

\qbezier(20,110)(45,-5)(180,20)

\qbezier(20,110)(35,-20)(180,20)

\end{picture}
$$
\captionof{figure}{}
\label{pasting}
\end{minipage}
\begin{minipage}[h]{0.45\textwidth}
$$
\setlength{\unitlength}{.3mm}
\begin{picture}(200,120)

\put(40,110){\circle*{2}}

\put(94,29){\circle*{2}}
\put(174,29){\circle*{2}}
\put(70,65){\circle*{2}}

\linethickness{.5mm}
\qbezier(20,110)(20,110)(40,110)
\qbezier(174,29)(180,20)(180,20)

\qbezier(40,110)(40,110)(70,65)
\qbezier(94,29)(94,29)(174,29)

\thinlines

\qbezier(70,65)(70,65)(94,29)

\qbezier(70,65)(70,65)(70,56)
\qbezier(70,65)(70,65)(65,58)
\qbezier(70,65)(70,65)(61,63)
\qbezier(70,65)(70,65)(63,70)

\put(38,112){\ensuremath{\scriptstyle P}}

\put(175,31){\ensuremath{\scriptstyle Q}}
\put(50,58){\ensuremath{\scriptstyle R}}

\put(85,26){\ensuremath{\scriptstyle B}}

\qbezier(20,110)(40,-10)(180,20)

\qbezier(20,110)(35,-15)(180,20)

\end{picture}
$$
\captionof{figure}{}
\label{newtwo}
\end{minipage}
\end{figure}

The eightfold point $R$ lies on the edge $PB$ of the first parallelogram, this is constraining the
width of the parallelogram to be maximal. Let $R'$ denote the corresponding point on the other
parallelogram; we are assuming that $R'$ is a regular hexagonal point. Now $R$ and $R'$ are also
identified in the new two-manifold and we let their image be called $R$ too.

In the picture we have drawn, we are assuming that $P$ and $Q$ are hexagonal points and
the folded edges $PA$ and $QA$ extend beyond the
points $P$ and $Q$ respectively. In the new two-manifold, $P$ and $Q$ are fourfold points,
from which it follows that the edges $QB$ and $PR$ are
folded.

\noindent
{\bf Main question:} what is the marking of the edge $RB$?

Let us look in the spherical construction at the point $R$. At $R'$ we have a hexagon with all
directions marked ``open'', whereas at the original point
$R$ we are assuming there is an eightfold point. The
two directions which are interior to the parallelogram are marked ``open'' by the hypothesis
that the parallelogram is standard. We may picture the
original point $R$ as follows, with the edges going towards $P$ and $B$ marked $p$ and $b$ respectively.

$$
\setlength{\unitlength}{.3mm}
\begin{picture}(200,120)

\put(100,60){\circle*{2}}

\put(100,60){\line(0,1){50}}
\put(94,115){$p(o)$}

\qbezier(100,60)(120,80)(140,100)
\put(143,99){$o$}

\put(100,60){\line(1,0){50}}
\put(157,57){$o$}

\qbezier(100,60)(120,40)(140,20)
\put(143,14){$b$}

\put(100,60){\line(0,-1){50}}
\put(96,0){$s_4$}

\qbezier(100,60)(80,40)(60,20)
\put(51,14){$s_3$}

\put(100,60){\line(-1,0){50}}
\put(37,57){$s_2$}

\qbezier(100,60)(80,80)(60,100)
\put(48,102){$s_1$}

\end{picture}
$$

Notice that the edge $p$ should be marked ``open'' otherwise we would have a non-admissible configuration
at $P$.
There are several different possibilities for the labels $s_i$ as well as that of $b$,
as will be discussed shortly.

At the new point $R$, the three interior sectors have been replaced by the three sectors which
were exterior to the parallelogram at the point $R'$. The two directions inside here are again
marked ``open'' because of our hypothesis that $R'$ was a flat hexagonal point.
We are left with a diagram of the following form:

$$
\setlength{\unitlength}{.3mm}
\begin{picture}(200,120)

\put(100,60){\circle*{2}}

\linethickness{.5mm}
\put(100,60){\line(0,1){50}}
\thinlines
\put(94,115){$p(f)$}

\qbezier(100,60)(120,80)(140,100)
\put(143,99){$o$}

\put(100,60){\line(1,0){50}}
\put(157,57){$o$}

\qbezier(100,60)(120,40)(140,20)
\put(143,14){$b(?)$}

\put(100,60){\line(0,-1){50}}
\put(96,0){$s_4$}

\qbezier(100,60)(80,40)(60,20)
\put(51,14){$s_3$}

\put(100,60){\line(-1,0){50}}
\put(37,57){$s_2$}

\qbezier(100,60)(80,80)(60,100)
\put(48,102){$s_1$}

\end{picture}
$$
The direction $p$ is now marked ``fold'' as discussed previously.

\begin{lemma}
\label{sampledet}
In the above situation, the markings of $s_2$, $s_3$ and $s_4$ determine the marking of $b$.
\end{lemma}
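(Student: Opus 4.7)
The plan is to determine the marking of $b$ by combining the admissibility of the scaffolding at $R$ in $Z_{i+1}$ with the propagation of fold-markings along straight edges.

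First I would collect the constraints already at hand: by the discussion just above the lemma, $p$ is marked $f$ and $t_1, t_2$ are marked $o$, while the markings of $s_1, s_2, s_3, s_4$ are inherited unchanged from the scaffolding of $Z_i$. By Hypothesis~\ref{reductionhyp}, the scaffolding on $Z_{i+1}$ is full and coherent, hence (by the proposition immediately preceding) admissible, so the local pattern at the eightfold point $R$ must be one of the enumerated admissible configurations for an $8$-fold point.

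Next I would use Corollary~\ref{straightcoh} to propagate marks across $R$ along straight edges. At the $8$-fold point $R$, the straight continuations of $p$ (i.e. directions at spherical distance $3$) are precisely $b$ and $s_3$, so the fold entering at $p$ either terminates at $R$ or is forced to exit at $b$ or $s_3$; similarly the straight continuations of each $s_i$ are determined, and since $t_1, t_2$ are known to be open, many of these continuations are ruled out. Going through the enumerated list---the single-fold configuration at $p$ (in which $b=o$); the two-opposite configuration $\{p,s_4\}$ (again $b=o$); the two-adjacent configuration $\{p,s_1\}$, with the alternative $\{p,t_1\}$ excluded by $t_1=o$ (again $b=o$); together with further admissible patterns in which $b$ itself or some other direction is folded---one checks case by case that the tuple $(s_2,s_3,s_4)$ selects a unique pattern, and thereby determines $b$.

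The main obstacle is completing this case analysis to cover all admissible local pictures at an eightfold point. The preceding text lists only the main configurations explicitly, noting that further admissible ones exist but are not expected to arise in the procedure. To prove the lemma one must enumerate those as well and verify in each that the marking of $b$ depends only on $(s_2,s_3,s_4)$, not on $s_1$ separately; equivalently, one must show that under the known constraints at $R$, admissibility forces $s_1$ to be a function of $(s_2,s_3,s_4)$. This combinatorial check is the hard part: the geometric ingredients (fold-line propagation via Corollary~\ref{straightcoh}, and the coherence established at the new fourfold point $P$) are already in place.
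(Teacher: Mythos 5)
Your approach stalls exactly where you say it does, and the missing ingredient is a reduction that the paper's proof makes at the very first step: since $p$ is marked ``fold'', any compatible map to a building must actually fold along $p$, and performing this fold collapses the octagonal local picture at $R$ to a \emph{hexagonal} one. Under that fold the two edge germs adjacent to $p$ (namely $s_1$ and the neighbouring open germ) merge into a single germ, call it $s$, whose marking is a priori unknown. The resulting hexagon has directions $o$, $s$, $s_2$, $s_3$, $s_4$, $b$ with opposite pairs $o$--$s_3$, $s$--$s_4$, $s_2$--$b$, and now one only needs the fully enumerated admissible hexagonal configurations (all open; two opposite folds; three alternating folds, possibly plus one more; no triple fold through an open edge). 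A short case analysis then gives: if $s_2,s_3,s_4$ are all open then $b$ is open; if $s_2$ is folded then its opposite $b$ is folded; if $s_3$ is folded then, its opposite $o$ being open, $s$ and $b$ must be folded; if $s_4$ is folded then $s$ is folded and $b$ is folded iff $s_3$ is. This is why $b$ depends only on $(s_2,s_3,s_4)$ and why $s_1$ drops out entirely -- it is absorbed into $s$.

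By contrast, your plan of matching the configuration at $R$ against the list of admissible \emph{eightfold} pictures cannot be completed from the paper's data, because that list is explicitly left incomplete (``several other configurations are also admissible\dots\ left to the reader to enumerate''), and configurations with $p$ folded together with further folds among the $s_i$ fall precisely into the unenumerated part. Your closing suggestion -- that one should show $s_1$ is a function of $(s_2,s_3,s_4)$ -- is also the wrong target: $s_1$ is not determined by the other markings and does not need to be; the fold at $p$ simply removes it from the problem. So the gap is not merely a long case check you have deferred, but a missing structural step (fold first, then classify hexagons) without which the case check is not even well posed.
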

\begin{proof}
Since $p$ is marked ``fold'', it has to be folded leading to a hexagonal point. The edge germs
to the left and right of $p$ are joined together into a single one marked $s$ below, and we don't know
how that one will be further folded. The hexagon may be pictured as:
$$
\setlength{\unitlength}{.3mm}
\begin{picture}(200,120)

\put(100,60){\circle*{2}}

\put(100,60){\line(1,0){50}}
\put (157,57){$o$}

\put(100,60){\line(-1,0){50}}
\put(40,57){$s_3$}

\qbezier(100,60)(113,81)(126,102)

\put(128,104){$s$}

\qbezier(100,60)(87,81)(74,102)
\put(64,104){$s_2$}

\qbezier(100,60)(113,39)(126,18)

\put(128,10){$b$}

\qbezier(100,60)(87,39)(74,18)
\put(64,10){$s_4$}

\end{picture}
$$
If $s_2$, $s_3$ and $s_4$ are open, then it follows that the full hexagon must be open.
If either $s_2$ or $s_4$ is folded, then from the given open edge we cannot have a triple fold,
so the opposite edge must also be folded. If $s_2$ is folded then
it follows that $b$ must be folded. If $s_4$ is folded then $s$ must be folded. The hexagon then
folds to a germ of half-apartment; and one can see $b$ would be folded if and only if
$s_3$ is folded.
On the other hand, if $s_3$ is folded, then since the
edge opposite to it is open, it follows that $s$ and $b$ must be folded. This completes the
determination of the marking of $b$.
\end{proof}

Of course this lemma treats just one of the various possible situations which can arise.
We expect that similar considerations should hold in the other cases.

In our example, since the
edge $RB$ is straight, the marking of $b$ propagates along the full edge and we conclude that
the marking of $RB$ is determined; for our example,
that completes the determination of the full scaffolding on the new two-manifold.

By coherence one sees that there were only a few possibilities for the folded edges at $R$,
and the reader may find it interesting to study what happens to the post-caustic curves in these
cases. Suppose for example that the marking of $b$ determined by the lemma is ``open''.
The possibilities for $R$ are: a BNR point with no folding;
a single folded edge $s_1$;
two opposite folded edges $s_1$ and $b$; or
two adjacent folded edges $s_4$ and $b$. In the latter two cases, the edge $BR$ was folded in
the original construction, and this fold must continue past $B$. In the new construction,
the edge $BR$ is no longer folded but the part of the post-caustic after $B$ will still be present.
Therefore, at the point $B$ in
the new construction we obtain two adjacent folded edges, this continuation together
with the new fold along $BQ$. Also in these two cases, at the new point $R$ we have two
folded edges, $p$ and either $s_1$ (adjacent) or $s_4$ (opposite). The new post-caustic coming from the point $P$ thus attaches onto the piece of post-caustic that used to come from one side of $B$, whereas the old
post-caustic on the other side of $B$ is now attached to the piece going through $Q$.

\subsection{The indeterminate case}
\label{sec-indeterminate}

In a less generic situation, the new marking might not be well determined. Consider a picture similar
to the previous one, but where the post-caustic bounding the parallelogram ends at an eightfold
point $P$
as shown in Figure \ref{pasting2}.

\begin{figure}
\begin{minipage}[h]{0.45\textwidth}
$$
\setlength{\unitlength}{.3mm}
\begin{picture}(200,130)

\put(40,110){\circle*{2}}
\put(120,110){\circle*{2}}
\put(94,29){\circle*{2}}
\put(174,29){\circle*{2}}
\put(70,65){\circle*{2}}

\put(101,34){\circle*{2}}

\linethickness{.5mm}
\qbezier(40,110)(40,110)(120,110)
\qbezier(120,110)(120,110)(180,20)
\thinlines

\qbezier(40,110)(40,110)(30,128)

\qbezier(40,110)(40,110)(94,29)
\qbezier(94,29)(94,29)(174,29)

\qbezier(40,110)(40,110)(101,34)
\qbezier(101,34)(174,29)(174,29)

\qbezier(70,65)(70,65)(80,65)
\qbezier(70,65)(70,65)(76,74)

\qbezier(70,65)(70,65)(70,56)
\qbezier(70,65)(70,65)(65,58)
\qbezier(70,65)(70,65)(61,63)
\qbezier(70,65)(70,65)(63,70)

\put(40,112){\ensuremath{\scriptstyle P}}
\put(120,112){\ensuremath{\scriptstyle A}}
\put(175,31){\ensuremath{\scriptstyle Q}}
\put(50,58){\ensuremath{\scriptstyle R}}

\put(85,26){\ensuremath{\scriptstyle B}}

\put(104,35){\ensuremath{\scriptstyle B'}}

\qbezier(20,100)(45,-5)(180,20)

\qbezier(12,97)(35,-20)(180,20)

\qbezier(30,128)(20,116)(20,100)
\qbezier(30,128)(12,115)(12,97)

\end{picture}
$$
\captionof{figure}{}
\label{pasting2}
\end{minipage}
\begin{minipage}[h]{0.45\textwidth}
$$
\setlength{\unitlength}{.3mm}
\begin{picture}(200,130)

\put(40,110){\circle*{2}}

\put(94,29){\circle*{2}}
\put(174,29){\circle*{2}}
\put(70,65){\circle*{2}}

\qbezier(40,110)(40,110)(70,65)

\qbezier(40,110)(40,110)(30,128)

\linethickness{.5mm}

\qbezier(174,29)(180,20)(180,20)

\qbezier(94,29)(94,29)(174,29)

\thinlines

\qbezier(70,65)(70,65)(94,29)

\qbezier(71,66)(71,66)(95,30)

\qbezier(70,65)(70,65)(70,56)
\qbezier(70,65)(70,65)(65,58)
\qbezier(70,65)(70,65)(61,63)
\qbezier(70,65)(70,65)(63,70)

\put(38,112){\ensuremath{\scriptstyle P}}

\put(175,31){\ensuremath{\scriptstyle Q}}
\put(50,58){\ensuremath{\scriptstyle R}}

\put(85,26){\ensuremath{\scriptstyle B}}

\put(83,49){\ensuremath{\scriptstyle ?}}

\qbezier(20,100)(45,-5)(180,20)

\qbezier(12,97)(35,-20)(180,20)

\qbezier(30,128)(20,116)(20,100)
\qbezier(30,128)(12,115)(12,97)

\end{picture}
$$
\captionof{figure}{}
\label{newtwo2}
\end{minipage}
\end{figure}

Now when we paste and cut out, the picture is in Figure \ref{newtwo2}, noting specially that
the edge $PR$ is now open. The argument used to conclude that $RB$ should be open, is no longer valid and
we don't know how to mark $RB$ (however that might be determined for
some configurations of the markings of $b$ and the $s_i$).

It corresponds to a BPS state: the eightfold point $P$ lies on a spectral network curve going
in the upward direction, whereas in the cases where the markings of $s_i$ don't tell us what to
do for $b$, the point $R$ is also on a spectral network curve. These two spectral network
curves meet head-on along the segment $PR$ and we get a BPS state.

One can also see that the resulting pre-building can admit folding maps to other
buildings. Here two eightfold points are connected
by an edge (which is not marked ``open''). This is the first and most basic example
where can be a nontrivial choice of maps to buildings,
some of which fold and hence do not preserve distances.

Such a two-manifold construction, which we denote now by $C$, is analogous to the tree with a BPS state
of Figure \ref{tree1}.

$$
\setlength{\unitlength}{.3mm}
\begin{picture}(200,130)

\put(60,60){\circle{2}}
\put(140,60){\circle{2}}

\linethickness{.5mm}
\qbezier(60,60)(100,60)(140,60)
\thinlines

\qbezier(60,60)(60,60)(25,110)

\qbezier(140,60)(140,60)(175,110)

\qbezier(25,110)(10,24)(100,24)
\qbezier(25,110)(10,13)(100,13)

\qbezier(175,110)(190,24)(100,24)
\qbezier(175,110)(190,13)(100,13)

\put(61,64){\ensuremath{\scriptstyle c_1}}
\put(131,64){\ensuremath{\scriptstyle c_2}}

\end{picture}
$$
In this picture there are two sheets below the three edges, and no sheets above.
The two eightfold points $c_1$ and $c_2$ are joined by an edge which we have
emphasized; it corresponds to a BPS state.

This construction $C$ is itself a pre-building, so it can be completed to a building $C\subset B$ containing
it isometrically as a subset. However, it also admits a map to another construction, folding
along the edge $c_1c_2$. To see this, choose two trapezoids sharing the edge $c_1c_2$, and
paste them together.
$$
\setlength{\unitlength}{.3mm}
\begin{picture}(200,130)

\put(60,60){\circle{2}}
\put(140,60){\circle{2}}

\linethickness{.5mm}
\qbezier(60,60)(100,60)(140,60)
\thinlines

\qbezier(60,60)(60,60)(25,110)

\qbezier(140,60)(140,60)(175,110)

\qbezier(25,110)(10,22)(100,22)
\qbezier(25,110)(10,15)(100,15)

\qbezier(175,110)(190,22)(100,22)
\qbezier(175,110)(190,15)(100,15)

\put(61,64){\ensuremath{\scriptstyle c_1}}
\put(131,64){\ensuremath{\scriptstyle c_2}}

\qbezier(60,60)(60,60)(74,40)

\qbezier(140,60)(140,60)(126,40)

\qbezier(74,40)(100,40)(126,40)

\end{picture}
$$

View this as a single sheet over the trapezoid and two sheets below. The
resulting construction $C'$ is again a pre-building so it can be completed to a building
$C'\subset B'$. Composing with the projection $C\rightarrow C'$ therefore gives a map
to a building $h':C\rightarrow B'$. Now $h'$ folds along the segment $c_1c_2$, and it
is not an isometric embedding.

This family of maps $h'$ is the analogue for $SL_3$ of the family of folding maps of trees pictured
in Figure \ref{newtree}.

\subsection{Keeping track of BPS states}

Our main conjecture is that if there are no BPS states, then an indeterminate case doesn't
occur.  In order to set up the possibility to consider BPS states, we will need to
pull along the spectral network (SN) lines in the process of our sequence of $Z_i$.

Let us consider the SN lines on the $Z_i$ as being additional markings, which
should conjecturally be subject to the
following conditions.

\begin{itemize}

\item
Some points will be marked as singularities.

\item
Each singularity will have a single SN line coming out of it.

\item
Each eightfold point which is
at the end of a postcaustic, that is to say, having a single fold line, will have
a single SN coming out of it. It is either opposite to the fold line, or adjacent to it (this is similar to the picture for
the cases of two fold lines at an eightfold point).

\item
Each eightfold point with zero fold lines, that is to say a BNR point, will have two SN lines coming out
of it.

\item
The eightfold points with two fold lines don't have SN lines.

\item
The SN lines should not intersect.

\item
The SN lines, in the middle, go through regular hexagonal points and they are straight.

\end{itemize}

The steps of our process consist of pasting together two parallelograms and cutting them out.
The two edges which are common to the two parallelograms are fold lines, meeting in a fourfold point.

We choose parallelograms which have
singular points somewhere on the boundary.

Each move will either decrease the total length of the postcaustics i.e. the fold lines, or diminish the number of fourfold points. In the case
of integer edgelengths this would prove the finiteness conjecture. In general we need some other argument.

One of the basic moves, when applied to the first fourfold point along a postcaustic, will glue together some new open edges.
In the case that the SN curve was opposite the fold line, the new glued edges are added to the SN curve, this way we
build out the SN curve. In the adjacent case, on the contrary, the SN curve gets eaten up.

One thing that can happen here is that as the SN curve gets eaten up, we can end up passing the singularity. Then it changes to
the other ``opposite'' case and we start spinning out the SN curve in the opposite direction. This is why all the SN directions
at a singular point can actually have the opportunity to contribute.

The main problem is to analyze the transformations of this picture which can occur when the various types of singularities
occur on the boundary of our parallelogram.

The markings $o$ or $f$ of the new edges glued in after we cut out the two parallelograms, are mostly determined; however, because
of the singularity there is one segment where it isn't determined. This was illustrated in the
previous subsection.

At least in a first stage, we will probably also want to assume some genericity of the positions of the singularities so that we don't reach several
singularities on the boundary at once.

The basic idea is to say that if we reach a position where the new markings $o$ or $f$ of the indeterminate edge are not
well determined by the configuration, then it must be that we had two SN curves which join up, and these form a BPS state. This phenomenon was illustrated in the example of Figures \ref{pasting2} and \ref{newtwo2}.

It is possible to envision finishing the process with no more fourfold points, but still having
a segment marked ``fold'' joining two eightfold points. In this case, the
pre-building is not rigid and there might be several harmonic mappings inducing different distance
functions. We are conjecturing that this can only occur if there is a BPS state, indeed
the fold line between eightfold points should correspond to a piece of the BPS state.

Understanding all of the possible cases currently looks complicated, but we hope that it can be done. We formulate
the resulting statement as a conjecture.

\begin{conjecture}
\label{mainconj}
Suppose that $(X,\phi )$ has no BPS states. Then the sequence of two-manifold constructions $Z_i$ is well-defined until
we get to one which has no more fourfold points, or sixfold points with triple fold lines.
Furthermore, this process stops in finite time, at least locally on $X$.
Finally, at the end of the process there are no more ``fold'' markings in the scaffolding.
\end{conjecture}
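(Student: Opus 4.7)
The plan is to organize the argument around three interlocking components: (i) a local determinacy lemma showing that at each step the new scaffolding is forced; (ii) a monovariant controlling termination; (iii) a clean-up argument showing the fold markings vanish at the end. The hypothesis that $(X,\phi)$ has no BPS states enters only in (i), through a dichotomy identifying indeterminate moves with head-on collisions of spectral network curves.

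For (i), I propose to enumerate, along the lines of the local analysis preceding Lemma \ref{sampledet}, the admissible local scaffolding configurations that can appear at a point on the glued edge after a pasting-cutting move. The proof would proceed by a case analysis over the types of points that can lie on the straight boundary edges of the pasted parallelogram $W^{a,b}$: flat hexagonal, eightfold BNR, eightfold endpoint of a post-caustic, eightfold with two opposite folds, and singular. In each case, one argues as in the sample computation of Lemma \ref{sampledet} that the known markings on the three non-ambiguous directions at the new point, together with coherence and the ban on length-$\leq 4$ cycles, pin down the marking of the remaining direction. The only way this argument can fail is when \emph{two} eightfold points with an SN line emanating into the interior of $W^{a,b}$ face each other across the common edge; by the SN tracking rules listed in Section \ref{sec-progress}, two such oppositely directed SN curves in the same direction constitute a BPS state. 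Under the BPS-free hypothesis this never occurs, so Hypothesis \ref{reductionhyp} holds at each stage and the sequence $Z_i$ is well-defined.

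For (ii), I would introduce the monovariant
\[
\mu (Z_i) = N_4(Z_i) + \lambda \cdot L_{\mathrm{pc}}(Z_i),
\]
where $N_4$ counts fourfold points, $L_{\mathrm{pc}}$ is the total transverse length of post-caustic curves, and $\lambda > 0$ is small enough that every elementary move strictly decreases $\mu$. A pasting-cutting move at a fourfold point either eliminates at least one fourfold point without lengthening post-caustics, or shortens a post-caustic by an amount controlled by the geometry of the chosen maximal parallelogram (the SN tracking of Section \ref{sec-progress} makes the ``build-out'' versus ``eat-up'' dichotomy precise, and in either case the bookkeeping can be arranged so that $L_{\mathrm{pc}}$ drops by a definite amount tied to the distance to the next non-hexagonal point encountered). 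For rational edgelengths, termination is immediate. In the general case, local finiteness requires showing that the decrements are bounded below on any compact subset; this should follow from the discreteness of the singular locus of $\phi$ and the fact that the SN curves, being straight in the flat metric, only interact at isolated points in the absence of BPS states.

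For the final claim, once the process terminates at some $Z_k$ with no fourfold points (and no triple-fold sixfold points), I would argue that any remaining fold marking must lie on a straight segment whose two endpoints are both eightfold. By the SN tracking rules, each end of such a segment carries an SN line running into the fold direction, and these two SN lines then form a BPS state connecting the endpoints, contradicting the hypothesis; hence the fold set is empty at termination. The main obstacle is (i): a rigorous enumeration of \emph{all} local configurations that can appear on the boundary of a maximal pasted parallelogram, including degenerate coincidences where several singularities or post-caustic endpoints land on the same edge simultaneously. Handling these coincidences almost certainly requires a genericity reduction (perturbing $\phi$ within its BPS-free chamber) together with a continuity argument to transfer the result back to the original $\phi$; making this perturbation compatible with the discrete combinatorics of the scaffolding is, I expect, the most delicate point of the whole argument.
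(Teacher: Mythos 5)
First, note that the statement you are proving is labeled a \emph{conjecture} in the paper, and by the authors' stated convention this means they regard it as an open problem: the paper contains no proof, only the heuristic discussion of Section \ref{sec-progress}. Your three-part plan is essentially a reorganization of that discussion. Part (i) is the case analysis of which Lemma \ref{sampledet} treats one instance, together with the dichotomy ``indeterminate move $\Leftrightarrow$ head-on collision of spectral network curves'' of Subsection \ref{sec-indeterminate}; part (ii) formalizes the authors' remark that each move either shortens the post-caustics or removes a fourfold point; part (iii) restates their closing observation that a residual fold segment joining two eightfold points should be a piece of a BPS state. So you are on the intended track, but what you have written is a plan carrying the same open gaps the authors themselves acknowledge, not a proof.

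Two gaps are concrete enough to name. First, the monovariant $\mu = N_4 + \lambda L_{\mathrm{pc}}$ does not obviously decrease: in the sample step of Section \ref{sec-progress} the move removes the fourfold point $A$ but creates \emph{new} fourfold points at $P$ and $Q$, so $N_4$ can increase, while $L_{\mathrm{pc}}$ changes by trading the edges $PA$, $QA$ for $QB$, $PR$; there is no a priori sign on either term, and no single fixed $\lambda$ can be expected to handle both the ``build-out'' and ``eat-up'' regimes of the SN curves. Your claim that the decrements are bounded below on compact sets is unsupported: the successive maximal parallelograms can shrink, and excluding a Zeno-type accumulation of infinitely many moves in a bounded region is precisely the content of Conjecture \ref{finitenessconj}, which the authors separate out because the integer-edgelength argument does not generalize. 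Second, the determinacy dichotomy in (i) presupposes the SN tracking rules (prescribed numbers of SN lines at each type of eightfold point, straightness, non-crossing), but these are themselves listed only as conjectural properties to be maintained inductively; invoking them to characterize the indeterminate case is circular unless they are established jointly with Hypothesis \ref{reductionhyp} in a single induction, and the full enumeration of boundary configurations (which the authors say ``currently looks complicated'') is not actually reduced by your genericity argument, since perturbing $\phi$ within a chamber changes the combinatorics of the $Z_i$ discontinuously and you give no mechanism for transferring the conclusion back to the original $\phi$. Until these points are addressed, the proposal should be regarded as a careful restatement of the authors' program rather than a proof of Conjecture \ref{mainconj}.
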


Associated with this is a finiteness conjecture.

\begin{conjecture}
\label{finitenessconj}
On bounded regions the process stabilizes in finitely many steps.
\end{conjecture}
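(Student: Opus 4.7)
The plan is to establish a lexicographically ordered monovariant on the reduction process and show it strictly decreases at each step after restricting attention to a fixed bounded region $K\subset X$. For each $i$, let $N_4(Z_i,K)$ denote the number of fourfold points of $Z_i$ lying over $K$, and let $L(Z_i,K)$ denote the total length of fold-marked (postcaustic) edges in the preimage of $K$. First I would verify that $N_4(Z_0,K)$ and $L(Z_0,K)$ are finite: this uses Lemma \ref{no2}, compactness of $K$, and the locally finite nature of the branch locus, caustic crossings, and singular data that feed into the initial construction of Theorem \ref{initialcons}.

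Next I would analyze a single application of Theorem \ref{patchcut} at a fourfold point $A$ lying over $K$. As the author notes in Section \ref{sec-progress}, every such move either strictly decreases the total postcaustic length or strictly decreases the number of fourfold points. The key point is that new fourfold points created by the move can only appear at the distinguished endpoints $P,Q$ of the glued edges (as in the configuration of Figure \ref{pasting}), and these endpoints are forced to lie either at pre-existing postcaustic endpoints, singularities, or where a maximal parallelogram has been obstructed; in particular, new fourfold points arising inside $K$ sit on a locally finite set determined by the initial spectral data. Under Conjecture \ref{mainconj} (so no BPS indeterminacy arises), the choice of maximal parallelogram is canonical and the pair $(N_4(Z_i,K),L(Z_i,K))$ strictly decreases in the lexicographic order at each step touching $K$; steps performed entirely outside $K$ leave the pair unchanged, which follows from the locality of the pasting-and-cutting operation.

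The main obstacle, and the reason the statement remains conjectural, is to bound the length decrement at each length-decreasing step uniformly from below by some $\varepsilon>0$ depending only on $(X,\phi)$ and $K$. When edge lengths lie in a common rational lattice (e.g.\ the integer case), one obtains $\varepsilon$ trivially from denominators, but in general the lengths are periods of $\Re\phi$ along various homotopy classes in $X^{\ast}$, and the set of such periods appearing through the process grows as new glued edges are created. The plan is to show that all lengths that ever appear lie in a finitely generated $\ZZ$-submodule $\Lambda\subset\RR$ generated by a finite set of periods associated to the singularities, branch points and crossings of caustics in a neighborhood of $K$; the genericity hypothesis (no BPS states) should guarantee that $\Lambda$ is discrete or at least that each individual decrement at a step involving $K$ is bounded below by the minimum positive element of the finite sub-module produced at that step.

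Granting the uniform lower bound $\varepsilon$, termination follows: $L(Z_i,K)$ is bounded below by $0$ and bounded above at $i=0$, so can absorb at most finitely many decrements of size $\geq\varepsilon$ without $N_4(Z_i,K)$ dropping, and $N_4(Z_i,K)$ itself is a nonnegative integer that can only decrease or stay constant; hence the lexicographic pair reaches its minimum in finitely many steps, at which point no fourfold point over $K$ remains and Conjecture \ref{mainconj} then identifies the outcome as the desired pre-building over $K$. The hard part, as indicated, is the uniform length bound; absent that, one can still conclude \emph{locally eventual} stabilization under any additional hypothesis (integrality, rationality of $\phi$-periods, or a discreteness assumption on $\Lambda$) that ensures $L$ takes values in a discrete set.
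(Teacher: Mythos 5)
This statement is labeled a \emph{conjecture} in the paper, and by the authors' stated convention that means no proof is offered: the text only records the heuristic that ``each move will either decrease the total length of the postcaustics\ldots or diminish the number of fourfold points,'' notes that integer edge lengths would then give finiteness, and explicitly concedes that ``in general we need some other argument.'' Your proposal is exactly this heuristic, organized as a lexicographic monovariant $(N_4, L)$, so in approach you are aligned with the authors --- but what you have written is a plan, not a proof, and the gap you yourself flag is precisely the one that keeps the statement conjectural.

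Two points make the gap genuine rather than a technicality. First, the decrement problem: the quantities $L(Z_i,K)$ are periods of $\Re\phi$, and your proposed remedy --- showing all lengths lie in a finitely generated $\ZZ$-submodule $\Lambda\subset\RR$ --- cannot by itself supply a uniform $\varepsilon>0$, because any such submodule of rank $\geq 2$ is dense in $\RR$ and has no minimal positive element; the phrase ``the minimum positive element of the finite sub-module produced at that step'' has no content unless you prove that the decrements realized by the process avoid accumulating at $0$, which is essentially the conjecture restated. Second, the locality claim needs more care: a pasting step based at a fourfold point outside $K$ uses \emph{maximal} parallelograms, which can be arbitrarily long and reach into $K$, so ``steps performed entirely outside $K$'' is not a well-defined complementary class without first bounding the geometry of the parallelograms --- again a statement about periods. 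A smaller inaccuracy: new fourfold points are not confined to the endpoints $P,Q$ of the glued edges; as the BNR example shows (the point $a_2$), an eightfold point on the postcaustic can become fourfold after a paste-and-cut, so your accounting of where $N_4$ can increase must include these. None of this says the strategy is wrong --- it is the authors' own --- but the proposal does not close the conjecture, and under only the rationality/discreteness side hypotheses you mention it proves a strictly weaker statement than the one asserted.
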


These conjectures say that the series of two-manifold constructions will stabilize to
a nonpositively curved one which can be called the {\em core}. It plays a role analogous
to Stallings' core graphs \cite{Stallings, KapovichMyasnikov, ParzanchevskiPuder}. Starting
from the core we can then put back everything that had been trimmed off,
following the discussion in Section \ref{sec-recovering}, to get a pre-building.

\begin{corollary}
\label{prebldg}
If there are no BPS states, then on a bounded region we obtain a well-defined pre-building
$B^{\rm pre}_{\phi}$. It has a $\phi$-harmonic map
$$
h_{\phi}: X\rightarrow B^{\rm pre}_{\phi}
$$
such that if $h:X\rightarrow B$ is any $\phi$-harmonic map to a building, then there exists a unique
factorization through a non-folding map $f:B^{\rm pre}_{\phi}\rightarrow B$.
\end{corollary}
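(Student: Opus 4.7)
The plan is to assemble the pre-building from the stabilized output of the reduction process, and then deduce the universal property by concatenating the bijections of maps-to-buildings that were established at each step of the construction. This is essentially a bookkeeping argument on top of the theorem that concludes Section \ref{sec-recovering}, supplemented by the two conjectures.

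First I would form the initial construction $Z$ from Theorem \ref{initialcons}, with its coherent full scaffolding $\sigma_Z$ given by Proposition \ref{initialscaff}; by that proposition, $\phi$-harmonic maps $X \to B$ are in natural bijection with maps $Z \to B$ compatible with $\sigma_Z$. Applying Theorem \ref{initialtrim} I trim $Z$ to a two-manifold construction $Z_0$ with coherent full scaffolding $\sigma_0$, preserving the bijection. The no-BPS-state hypothesis together with Conjecture \ref{mainconj} now tells me that the reduction process of Section \ref{sec-reduction} is well-defined and produces a sequence of two-manifold constructions $Z_0, Z_1, \ldots$ with coherent full scaffoldings; Theorem \ref{patchcut} guarantees that at each step the set of compatible maps to any building $B$ is in bijection with the set at the previous step. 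Invoking Conjecture \ref{finitenessconj}, on the given bounded region the sequence stabilizes at some $Z_k$ with no fourfold points and (by Conjecture \ref{mainconj} again) no fold markings in $\sigma_k$, hence $Z_k$ satisfies SPB-loc and is a two-manifold pre-building-like piece.

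Next I would run the reversal of Section \ref{sec-recovering}: put back into $Z_k$ all the parallelograms that were cut out, working backwards through the sequence and through the initial trimming of $Z$. At each pushout of a standard parallelogram, I apply Lemma \ref{foldingin} (folding-in) to convert the pushout into one that preserves SPB-loc and simple connectivity. The result is a pre-building $B^{\rm pre}_{\phi} := \widetilde{H}$ equipped with a map from $F = Z$, and hence with the composed $\phi$-harmonic map $h_\phi : X \to B^{\rm pre}_{\phi}$. To check universality, given any $\phi$-harmonic map $h : X \to B$ to a building, the chain of bijections produces a unique map $Z_k \to B$ compatible with $\sigma_k$; because $\sigma_k$ has no fold marks, this map is non-folding. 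Then the universal property of folding-in (Lemma \ref{foldingin}), applied inductively at each of the reinserted standard parallelograms, yields a unique factorization $B^{\rm pre}_{\phi} \to B$ which is again non-folding.

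The main obstacle lies in verifying, step by step, that the reinserted pushouts genuinely fall into the standard regime required by Lemma \ref{foldingin}, so that the universal property can be iterated. Concretely, I need to check that after each pasting-cutting move, the edges along which earlier parallelograms attach remain straight with respect to the newly determined scaffolding, so that the attaching maps $V^{a,b} \to \widehat{G}$ are standard inclusions; otherwise the extension $P^{a,b} \to B$ would be non-unique and the factorization through $B^{\rm pre}_{\phi}$ could fail to be non-folding. Handling this requires exactly the control over the propagation of open markings along straight edges given by Corollary \ref{straightcoh} and Lemma \ref{sampledet}, and is the place where the no-BPS-state hypothesis is doing real work; packaging it cleanly into a finite induction on the bounded region is where the bulk of the technical effort would go.
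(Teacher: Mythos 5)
Your proposal follows essentially the same route as the paper: it is exactly the argument of Sections \ref{sec-reduction} and \ref{sec-recovering} (initial construction and trimming, the pasting/cutting reduction under Conjectures \ref{mainconj} and \ref{finitenessconj}, then reinsertion of the trimmed parallelograms via the folding-in of Lemma \ref{foldingin}), with universality obtained by concatenating the bijections of Theorems \ref{initialcons}, \ref{initialtrim} and \ref{patchcut}. Your closing caveat about verifying that the reinserted attaching maps $V^{a,b}\rightarrow \widehat{G}$ remain standard is precisely the subtlety the paper itself flags at the end of Section \ref{sec-recovering}, so the proposal matches the intended argument.
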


\subsection{Distances}

One of the main statements which we would like to obtain through this theory is that the factorization
from the pre-building is an isometric embedding. Recall that the {\em Finsler distance} is defined on
an apartment $A$ with coordinates $x_1,x_2,x_3$ subject to $\sum x_i=0$, by
$$
d((x_1,x_2,x_3), (x'_1,x'_2,x'_3)):= {\rm max} \{ x'_i- x_i\} .
$$
It is not symmetric, and indeed the two numbers $d(x,x')$ and $d(x',x)$ serve to define the {\em vector
distance} which is the series of numbers $x'_i-x_i$ arranged in nonincreasing order.

\begin{conjecture}
\label{isoemb}
In the situation of Corollary \ref{prebldg},
given a $\phi$-harmonic map $h:X\rightarrow B$ to a building, the factorization
$f: B^{\rm pre}_{\phi}\rightarrow B$ is an
isometric embedding for the Finsler distance.
\end{conjecture}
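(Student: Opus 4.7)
The plan is to show that for any two points $x, y \in B^{\rm pre}_{\phi}$ there is an apartment $A$ of $B^{\rm pre}_{\phi}$ containing both, and that the restriction of $f$ to $A$ factors as an affine isometry onto an apartment of $B$. Given these two ingredients the Finsler distance on either side is computed inside a single apartment, so the conclusion follows from $d_B(f(x),f(y)) = d_{f(A)}(f(x),f(y)) = d_A(x,y) = d_{B^{\rm pre}_{\phi}}(x,y)$.

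First I would establish a common apartment statement adapted to pre-buildings. Corollary \ref{common-apartment} does not apply to $B^{\rm pre}_{\phi}$ directly since a pre-building need not satisfy Ex-Seg or Ex-Obt, so one needs to use the explicit construction. For points $x = h_{\phi}(\tilde x)$ and $y = h_{\phi}(\tilde y)$ in the image of $h_{\phi}$, choose a path $\gamma$ in $X$ from $\tilde x$ to $\tilde y$ and decompose it into finitely many subpaths each lying in some $\Omega_{pq}$, so that each subpath's image in $B^{\rm pre}_{\phi}$ lies in a local apartment. The hypothesis that there are no BPS states, together with Conjecture \ref{mainconj}, guarantees that the pasting/cutting process terminates and that the scaffoldings introduced along each step remain coherent, so the local apartments coming from successive subpaths can be amalgamated into a single apartment $A \subset B^{\rm pre}_{\phi}$ containing the images of $\tilde x$ and $\tilde y$. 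For a point that lies in one of the additional sectors glued on in Section \ref{sec-recovering} rather than in the image of $h_{\phi}$, the pushouts along standard parallelograms immediately supply a local apartment that may be extended to meet the image of $h_{\phi}$, reducing to the previous case.

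Second, I would verify that $f$ carries apartments of $B^{\rm pre}_{\phi}$ to apartments of $B$ isometrically. By Corollary \ref{prebldg} the map $f$ is non-folding, which in particular means that at every point of an apartment $A \subset B^{\rm pre}_{\phi}$ all edge germs are marked open and are mapped to open edge germs in $B$. Hence straight segments in $A$ map to straight segments in $B$, so $f(A)$ is locally affine, and by the path-connectedness of $A$ and simple connectedness of $B$'s apartments, $f(A)$ is contained in a single apartment $A' \subset B$. The restriction $f|_A : A \to A'$ is piecewise affine, non-folding, and respects the Weyl hyperplane structure at every point; any such map of affine Weyl spaces is an affine isometry for the Finsler distance.

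The main obstacle is the common apartment step, which amounts to controlling the global geometry of the pre-building built by a possibly long sequence of pasting operations. The delicate part is handling the caustic and branch points, where the local models $E_x$ are not themselves enclosures; here one must check that the factorization $U_x \to E_x \to B^{\rm pre}_{\phi}$ supplied by the initial construction extends, along noncritical paths, to an apartment in $B^{\rm pre}_{\phi}$ rather than just in $B$. The absence of BPS states is used precisely at this point: a BPS state would allow the indeterminate foldings of Section \ref{sec-indeterminate}, producing an edge between two eightfold points along which the apartment cannot be straightened, obstructing the common apartment. Conversely, once the reduction process terminates with only open markings (Conjecture \ref{mainconj}), the needed apartments do exist, and the rest of the argument is formal.
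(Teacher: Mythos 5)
Your overall strategy---find a common apartment $A\subset B^{\rm pre}_{\phi}$ through any two points and check that $f|_A$ is an affine isometry onto its image---is genuinely different from the route sketched in the paper, and it founders on the first step. The common-apartment property is a property of \emph{buildings}, obtained in Corollary \ref{common-apartment} from SB-loc, Ex-Obt and Ex-Seg; a pre-building fails these conditions by design, and restoring them is exactly what the small-object argument of Theorem \ref{small-object} does by adjoining infinitely many new sectors. Your proposed "amalgamation" of the local apartments met along a path $\gamma$ is the unjustified step: when two local apartments overlap in a half-apartment at a negatively curved (eightfold) point, the sectors needed to complete their union to a single flat are in general simply absent from $B^{\rm pre}_{\phi}$. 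Already in the BNR example the pre-building is a cone with one point of total angle $480^{\circ}$ plus two extra sectors; two points separated angularly by more than a flat half-plane on each side of that vertex need not lie in any common apartment of the pre-building, even though they do after completion to $B$. The absence of BPS states removes the indeterminate foldings of Section \ref{sec-indeterminate}, but it does not remove negative curvature, which is the actual obstruction here. A secondary issue is that without a common apartment your computation does not even define $d_{B^{\rm pre}_{\phi}}(x,y)$.

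The paper's intended argument is built precisely to avoid this: one defines \emph{noncritical paths} in a scaffolded construction, notes that (i) the Finsler/vector distance in a (pre)building is computed by summing increments along a noncritical path, (ii) a compatible map to a building sends noncritical paths to noncritical paths of the same Finsler length, and (iii) in a connected, fully open, \emph{Finsler nonconcave} construction any two points are joined by a noncritical path, with (iv) the no-BPS hypothesis entering to show $B^{\rm pre}_{\phi}$ is Finsler nonconcave. Noncritical paths can bend at the negatively curved points where flats cannot pass, which is why this route works where the common-apartment route does not. Your second step (a non-folding map restricted to an apartment is an isometric embedding into an apartment of $B$) is essentially a special case of item (ii) and is fine; if you want to salvage your approach, you should replace "common apartment" throughout by "common noncritical path" and prove the nonconcavity statement, at which point you have reconstructed the paper's strategy.
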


To approach this statement, we should define a notion of {\em noncritical path} in a
construction, with respect to a scaffolding.

Suppose $F$ is a construction provided with scaffolding, and
suppose $S$ is a straight segment joining its two endpoints $x$ and $y$.
Let $s_x$ and $s_y$ denote the vertices corresponding to the directions of $S$
in $F_x$ and $F_y$ respectively. We say that $F$ is {\em Finsler concave} along $S$ if
there exist vertices $u$ and $v$ in $F_x$ and $F_y$ respectively, such that
$u$ is at distance $4$ from $s_x$ and $v$ is at distance $4$ from $s_y$.
Include also the possibility of a ``segment of length zero'' at a point $x$
with two vertices $u,v$ of $F_x$ at distance $\geq 5$. Say that $F$ is {\em Finsler nonconcave}
if there is no segment or point at which it is Finsler concave.

The idea to show Conjecture \ref{isoemb} would be to do the following steps.

\begin{itemize}

\item
The Finsler (or vector) distance between points in a (pre)building
is calculated by adding up the distances along a noncritical path.

\item
If $(F, \sigma )$ is a construction with scaffolding and
$F\rightarrow B$ is a compatible
map to a building, then the image of a noncritical path in $F$ is a noncritical
path in $B$ with the same Finsler length.

\item
In a connected construction with fully open scaffolding, which is Finsler nonconcave, any two points
are joined by a noncritical path.

\item
When there are no BPS states and assuming the previous conjectures, the pre-building
$B^{\rm pre}_{\phi}$ is Finsler nonconcave.

\end{itemize}
It should be possible to check these properties for any given example.

\subsection{Isoperimetric considerations}

We close this section by mentioning an important aspect. Under the pasting-together process,
we glue together two parallelograms which are originally joined along two edges. The two parallelograms
are supposed to be disjoint except for sharing these two edges. We need to choose a maximal
such situation. In particular, when increasing the size of the parallelogram up to its
maximal value (determined by when we meet some kinds of singularities along the boundary)
we would like to make sure that we don't suddenly hit two points which are already previously
identified. This is insured using an isoperimetric argument. Namely, suppose we have pasted
together the parallelograms just up to a point right before the extra joined points.
Cut out this pasted parallelogram, leaving as usual a two-manifold construction. The
eightfold point from the vertex of the pasted parallelogram, is very near to a pair of points
on the two sheets which are already identified. This gives a very short loop in our
two-manifold construction (by very short here, it means arbitrarily short depending on
how close we got to the already-joined points).

By hypothesis, our Riemann surface $X$ is simply connected (if we started with an original
problem on a non-simply connected surface the first step would have been to pass to the
universal cover). It follows that the successive two-manifold constructions $Z_i$ are
also simply connected. So, we are now at a simply connected two-manifold construction
which has a very short  loop. This loop cuts the surface into two pieces, at least
one of which is a disk. We claim that this is impossible. To see that, notice
that since we are going arbitrarily close to the joined-together point,
there are only two possible ways for a post-caustic curve of fold lines to pass
from outside the disk to inside the disk: either through the eightfold vertex of the
pasted parallelogram, or through the joined-together point. From this maximum
of two post-caustics passing through the boundary of the disk, and using the
fact that along any post-caustic the singular points alternate eightfold and fourfold
points starting with eightfold points at the endpoints, we count that the
difference
$$
(\mbox{number of fourfold points}) - (\mbox{number of eightfold points})
$$
is at most $1$. It follows that the total angle deficit due to the curvature inside the
disk is at most $120^{\circ}$ of positive curvature (whereas an arbitrary amount of
negative curvature). Therefore the isoperimetric inequality for this disk cannot be
worse than that of a single fourfold point, where a small boundary implies that
the disk is small itself. We get a contradiction to the arbitrarily small nature of
our loop. This completes the proof that any two parallelograms to be pasted together through
our process, are joined solely on the two edges coming out of the fourfold point.

\section{The A2 example}
\label{sec-a2}

We include a few pictures concerning the next class of examples after BNR.
It illustrates the phenomenon of crossing of caustics, and there are angular rotations leading
to a BPS state.
Because of space considerations we don't have room for a full analysis here,
that will be done elsewhere.

The spectral network for this example is shown in Figure \ref{a2sn}. Note that there are two
caustics intersecting in the middle.

\begin{figure}
\centering
\includegraphics[width=0.9\textwidth]{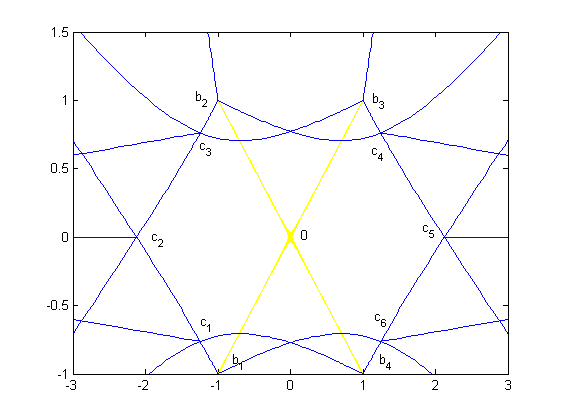}
\caption{A spectral network}
\label{a2sn}
\end{figure}

\begin{figure}
\begin{minipage}[h]{0.85\textwidth}
\centering
\includegraphics[width=0.9\textwidth]{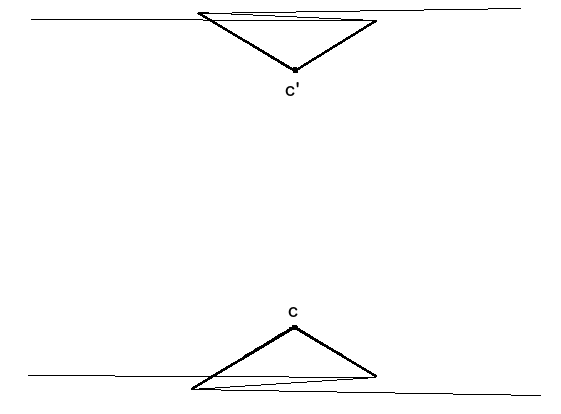}
\end{minipage}
\caption{The pre-building}
\label{a2pb}
\end{figure}

\medskip
The associated two-manifold construction is, in this case, the same as the pre-building. It
will look as shown in Figure \ref{a2pb}.

There are two eightfold points $c$ and $c'$. The zig-zag boundary lines are shown to
emphasize that the picture represents three sheets over the triangular regions and a single
sheet elsewhere.

The image of the map from $X$ to the pre-building is shown in Figure \ref{phihar}.
At the top is again the spectral network, reprising Figure \ref{a2sn}, with some horizontal dashed lines introduced for visualization.
Below it is the image, in the pre-building that was pictured before.
One can follow the horizontal dashed lines that loop around in order to understand what is happening.

\begin{figure}
\begin{minipage}[h]{0.85\textwidth}
\centering
\includegraphics[width=0.9\textwidth]{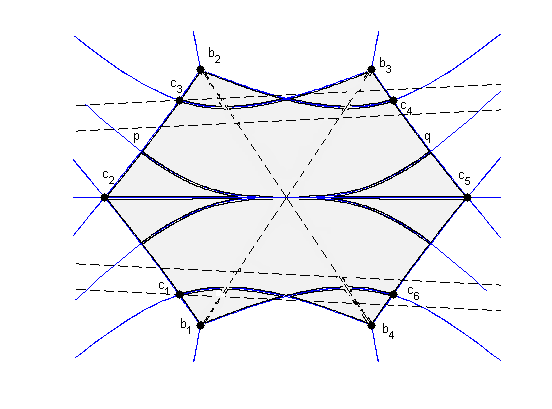}
\end{minipage}

$$
\downarrow
$$

\begin{minipage}[h]{0.85\textwidth}
\centering
\includegraphics[width=0.9\textwidth]{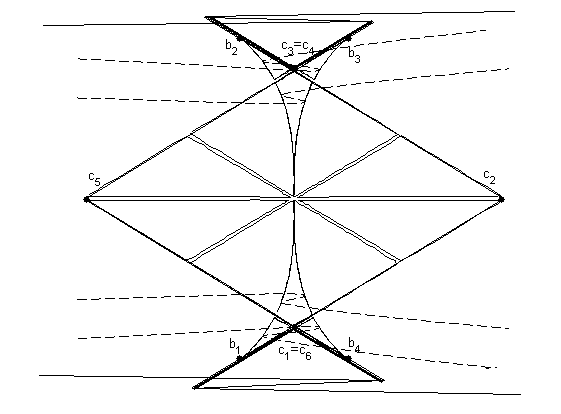}
\end{minipage}

\caption{The $\phi$-harmonic map}
\label{phihar}
\end{figure}

\bigskip

The collision points $c_1$ and $c_6$ go to the point labeled $c$ in the prebuilding; the
collision points $c_3$ and $c_4$ go to the point labeled $c'$. The collision points $c_2$ and
$c_5$ don't go to singular points in the pre-building. Note how the regions containing points
$c_2$ and $c_5$ are folded over to opposite sides as compared to the domain
picture.

\medskip

As the spectral differential $\phi$ changes, for example just by being multiplied by an angular constant $e^{i\theta}$ or some more complicated deformation, the relative positions of the two singular points $c$ and $c'$ will change. The BPS states occur when the line segment $cc'$ goes in the direction of one
of the reflection hyperplanes. We are then in the situation that was considered above in
Subsection \ref{sec-indeterminate}.

\section{Consequences and further considerations}
\label{sec-further}

In this section we look at some consequences and other further considerations.

\subsection{The universal pre-building}

As pointed out in Corollary \ref{prebldg},
the process conjectured above leads to the construction of a map to a
pre-building
$$
h_{\phi} : X\rightarrow B^{\rm pre}_{\phi}.
$$
In the absence of BPS states, it will
have a strong universal property: if $h:X\rightarrow B$ is any $\phi$-harmonic map to a building, then there is a unique
factorization $B^{\rm pre}_{\phi}\rightarrow B$ which is an isometric embedding.

Conjectures \ref{mainconj} and \ref{finitenessconj}, when there are no BPS states,
imply that we can construct the pre-building without necessarily following
through the whole procedure. Indeed, if we can illustrate some pre-building $B'$
with a $\phi$-harmonic
map $X\rightarrow B'$, such that some choice of the initial construction surjects onto $B'$,
then $B'=B^{\rm pre}_{\phi}$. For example, this shows that the picture
given Figure \ref{a2pb} of Section \ref{sec-a2} does indeed show the pre-building for the A2 example.

\subsection{Determination of the WKB exponents}

Our conjectures up through Conjecture \ref{isoemb}
imply that the WKB exponents are uniquely determined, as we state in the following
corollary.

\begin{corollary}
For any WKB problem with spectral curve corresponding to $\phi$, the ultrafilter exponent $\nu ^{WKB}_{\omega}(P,Q)$
for the transport from a point $P$ to a point $Q$, must be equal to the Finsler distance in
$B^{\rm pre}_{\phi}$
from $h_{\phi}(P)$ to $h_{\phi}(Q)$. In particular, it doesn't depend on the
choice of ultrafilter, which implies that the limit used to define the WKB exponent exists and its value
is calculated as the Finsler distance in  $B^{\rm pre}_{\phi}$.
\end{corollary}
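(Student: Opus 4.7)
The plan is to deduce the corollary from the universal property of $B^{\rm pre}_\phi$, the isometric embedding Conjecture \ref{isoemb}, and the identification, in Parreau's limiting building, of the WKB exponent with a Finsler distance. First I would fix a non-principal ultrafilter $\omega$ and invoke Parreau's construction \cite{Parreau} to obtain the $\RR$-building ${\rm Cone}_\omega$ together with the equivariant harmonic map $h_\omega \colon \widetilde{X} \to {\rm Cone}_\omega$. The Morgan--Shalen--Parreau identification built into the definition of ${\rm Cone}_\omega$ as an $\omega$-limit of metric rescalings of the symmetric space, combined with the asymptotic $\|T_{PQ}(t)\| \sim e^{\nu t}$, shows that the Finsler (vector) distance between $h_\omega(P)$ and $h_\omega(Q)$ in ${\rm Cone}_\omega$ computes the ultrafilter exponent $\nu^{WKB}_\omega(P,Q)$.

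Next, by the theorem of \cite{KNPS} recalled in the introduction, $h_\omega$ satisfies $dh_\omega = \Re \phi$, so it is a $\phi$-harmonic map to a building. Corollary \ref{prebldg}, which is available because we assume no BPS states, then produces a unique factorization $h_\omega = f_\omega \circ h_\phi$ through a non-folding map $f_\omega \colon B^{\rm pre}_\phi \to {\rm Cone}_\omega$. Applying Conjecture \ref{isoemb} to $f_\omega$ yields that this factorization is an isometric embedding for the Finsler distance, so
\[
d_{{\rm Cone}_\omega}(h_\omega(P),h_\omega(Q)) \;=\; d_{B^{\rm pre}_\phi}(h_\phi(P),h_\phi(Q)).
\]
The right-hand side is intrinsic to $\phi$, so $\nu^{WKB}_\omega(P,Q)$ is independent of the choice of $\omega$.

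To upgrade ultrafilter-independence to existence of the ordinary limit, I would use the standard a priori bound showing that $t \mapsto t^{-1}\log \|T_{PQ}(t)\|$ is bounded for large $t$. Any bounded real-valued function on $[t_0,\infty)$ whose values along every non-principal ultrafilter agree must in fact converge in the usual sense, since otherwise two distinct accumulation points could be realized as ultrafilter limits, contradicting the equality already established. The common value of the limit is then exactly $d_{B^{\rm pre}_\phi}(h_\phi(P),h_\phi(Q))$.

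The main obstacle is plainly Conjecture \ref{isoemb}: this is where all the real geometric work is hidden. Proving it requires carrying out the noncritical-path program outlined at the end of Section \ref{sec-progress}, namely showing that Finsler distance in $B^{\rm pre}_\phi$ is realized by a noncritical path, that noncritical paths are sent to noncritical paths of equal Finsler length by any scaffolding-compatible map to a building, and that $B^{\rm pre}_\phi$ is Finsler nonconcave. These in turn presuppose the foundational Conjectures \ref{mainconj} and \ref{finitenessconj} that guarantee $B^{\rm pre}_\phi$ exists, is well-defined, and has the expected nonpositive curvature properties. Conditional on all of these, the corollary itself is a short deduction of the kind sketched above.
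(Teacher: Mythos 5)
Your proposal is correct and follows essentially the same route the paper intends: the paper offers no written proof, stating only that "our conjectures up through Conjecture \ref{isoemb} imply" the result, and your chain (Parreau's ${\rm Cone}_\omega$ and the identification of $\nu^{WKB}_\omega$ with Finsler distance there, the factorization of Corollary \ref{prebldg}, the isometric-embedding Conjecture \ref{isoemb}, and the standard accumulation-point argument upgrading ultrafilter-independence to an honest limit) is exactly that deduction made explicit. You also correctly locate where the genuine mathematical content lies, namely in Conjectures \ref{mainconj}, \ref{finitenessconj} and \ref{isoemb} rather than in the corollary itself.
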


Even before obtaining general proofs of all of the conjectures which go into this statement, it should
be possible to verify the required things when we are given a particular example of a spectral
differential $\phi$, thus obtaining a calculation of the WKB exponents.

One expects, from the resurgence picture, that when there is a BPS state the ultrafilter exponent should usually depend on
the choice of ultrafilter.

\begin{conjecture}
The WKB exponents determined by our process, when there are no BPS states, coincide with the
exponents constructed by the spectral network procedure of Gaiotto-Moore-Neitzke.
\end{conjecture}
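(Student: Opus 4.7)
The plan is to show that both the Finsler distance in $B^{\rm pre}_{\phi}$ and the GMN spectral network exponent compute the same quantity, namely the leading exponential order of parallel transport for the WKB problem with spectral data $\phi$. Since, by Conjecture \ref{isoemb} and its corollary, the pre-building exponent gives the actual WKB exponent (independent of the ultrafilter), and since GMN designed their formalism precisely to capture this leading asymptotic, the agreement should follow once both descriptions are rewritten in a common language of noncritical paths on $X$ decorated by sheet choices.

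First I would set up a dictionary between the two formalisms. On the GMN side, the transport exponent from $P$ to $Q$ is computed by integrating a chosen sheet of $\phi$ along a path in $X$, where the sheet is swapped each time one crosses a spectral network line according to a combinatorial rule. On our side, the Finsler distance is computed by finding an immersive noncritical path from $h_{\phi}(P)$ to $h_{\phi}(Q)$ in $B^{\rm pre}_{\phi}$, lifting its preimage to $X$, and integrating $\Re \phi$. The post-caustic curves produced by the reduction process of Section \ref{sec-progress} would play the role of spectral network lines: when an immersive path crosses a post-caustic, the sheet label determined by the local trivialization of $\phi$ changes in a manner dictated by the fold-marking of the scaffolding.

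The key steps would then be: (i) match the post-caustic curves, locally away from BPS states, with the spectral network curves of GMN, following the evolution step by step through the $Z_i$; (ii) show that the sheet-swap prescribed at a post-caustic crossing, as forced by the admissible scaffolding configurations, reproduces the corresponding GMN Stokes data; and (iii) deduce that the integral of $\Re \phi$ along a noncritical path lifting from $B^{\rm pre}_{\phi}$ to $X$ with the dictated sheet choices agrees with the GMN exponent. Step (iii) is essentially formal once (i) and (ii) are in hand, because both sides are just adding up the same one-form along the same piecewise-lifted path.

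The main obstacle will be step (i). The original caustics in $Z_0$ should match the initial segments of GMN network lines emanating from branch points; but as the pasting-together process propagates, a post-caustic either grows (the ``opposite'' case) or is consumed (the ``adjacent'' case), and these two cases should mirror the combinatorial growth rules and cancellation of double points in the iterative construction of a spectral network. Making this correspondence precise requires identifying each move $Z_i \to Z_{i+1}$ with a specific incremental step on the GMN side, and checking that genericity plus the absence of BPS states causes both algorithms to terminate with identical data. A cleaner route may be to argue indirectly: if any given example can be checked by hand (as for BNR and the $A_2$ example of Section \ref{sec-a2}), and both procedures deform continuously in $\phi$ away from BPS walls, then agreement on a dense set of examples together with continuity should imply the general statement.
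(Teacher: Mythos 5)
This statement is one of the paper's open conjectures (recall the paper's convention: conjectures are statements ``needing considerably more work to prove''), and the paper supplies no proof of it; so there is nothing to compare your argument against, and your proposal must stand on its own. As written it does not: steps (i) and (ii) of your plan --- matching the curves produced by the reduction process with the GMN network and matching the induced sheet-swaps with the GMN Stokes data --- are precisely the content of the conjecture, and you leave them as things ``to be done.'' Moreover, your proposed dictionary starts from a misidentification. You equate the post-caustic curves (the fold lines of the scaffolding, which originate from the caustics, i.e.\ the loci where the three foliation directions are tangent) with the GMN spectral network lines. The paper keeps these two kinds of curves carefully separate: in Section \ref{sec-progress} the SN lines are \emph{additional} markings pulled along through the process, emanating from singularities and from eightfold points, passing through regular hexagonal points, and explicitly distinct from the fold lines (an eightfold point at the end of a post-caustic carries an SN line \emph{opposite or adjacent} to its fold line, not equal to it). Already in the BNR example the caustic joins $b_1$ to $b_2$ through the middle of the diamond, while the spectral network lines bound the diamond; these are different curves on $X$. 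A correct comparison with GMN would have to be routed through the SN markings, not the post-caustics, and would have to engage with how GMN actually compute exponents (via detour paths along the network), which your sketch does not address.

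Your fallback argument --- verify the statement on examples and conclude by ``continuity plus density'' --- is also not a proof. Both sides of the asserted equality are defined by discrete combinatorial procedures whose continuity (really, local constancy of the combinatorics) within a chamber is itself unproven and is essentially equivalent to Conjectures \ref{mainconj} and \ref{finitenessconj} together with the analogous stability statement for the GMN algorithm; and agreement on a ``dense set of examples checked by hand'' is not something one can establish. What your outline does usefully identify is that, \emph{conditional} on Conjecture \ref{isoemb} and its corollary, the pre-building side computes the true WKB exponent independently of the ultrafilter, so the conjecture reduces to showing that the GMN procedure computes that same asymptotic; but that reduction is already implicit in the paper's corollary preceding the conjecture, and the remaining comparison is exactly the open problem.
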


\subsection{Relation with the work of Aoki, Kawai, Takei {\em et al}}

The pictures we consider here are starting to look a lot like the pictures
which occur in the work of Aoki, Kawai, and Takei \cite{AKT1} \cite{AKT2}. It will be interesting
to establish a precise comparison.

\subsection{Stability conditions}

In the works of Bridgeland and Smith \cite{BridgelandSmith} and Haiden, Katzarkov and Kontsevich \cite{HaidenKatzarkovKontsevich}, a picture is developed whereby each pair $(X,\phi )$
consisting of a compact Riemann surface and a twofold spectral covering,
corresponds to a Bridgeland stability condition on a Fukaya-type triangulated category.
The space of $(X,\phi )$ is divided up into chambers whose boundaries are walls determined by
the existence of BPS states. A stability condition corresponding to $(X,\phi )$ should really be thought of as associated to the circle of $(X,e^{i\theta}\phi )$, and the values of $\theta$ where BPS states arise are
the angles of the central charge for stable objects.

This chamber and wall structure is expected to be closely related to the wallcrossing phenomenon introduced
by Kontsevich and Soibelman \cite{KontsevichSoibelman}. Making this relationship precise, and extending the
construction of stability conditions to higher rank spectral curves, are some of the long-term motivations for
the present project.

In the $SL_2$ case these subjects have been treated recently by Iwaki and Nakanishi \cite{IwakiNakanishi},
and one of the goals in the higher rank case would be to extend the various different aspects of their theory.

What we obtain from our construction for $SL_3$
is that inside each chamber, that is to say inside a connected region
where there are no BPS states, our pre-building $B^{\rm pre}_{\phi}$
is a combinatorial geometric object
associated\footnote{The pre-building is essentially uniquely defined according to our conjectures.
The only ambiguity comes from the choice of size of neighborhoods used to make the initial construction.
Those affect the size, which is inessential, of the additional
``stegosaurus dorsal plates'' attached to a caustic,
illustrated for example in the white areas just above the dotted line in Figure \ref{bnrPB}.
}
to $(X,\phi )$. Over a given chamber the pre-building will vary in a smooth way. As we cross a wall determined by a BPS state, the pre-building will transform in some kind of ``mutation''. The transformations of spectral
networks are pictured in \cite{GMN-SN}.

We hope that the study of the geometric form and structure of these mutations
can provide some insight into the
structure of the categorical stability conditions.

Kontsevich has outlined a far reaching program that associates a ``Fukaya-type category with coefficients'' to a ``perverse sheaf of categories'' on a symplectic manifold $X$.  A major part of this program is the construction of stability structures on the Fukaya category with coefficients in a sheaf of categories starting from the data of stability structures on the stalks of this sheaf.  A symplectic fibration $\pi: Y \rightarrow X$  conjecturally gives rise to such a perverse sheaf of categories $\mathfrak{S}$, whose stalks are, roughly speaking,  the Fukaya categories of the fibers of $\pi$. In this situation, the Fukaya category with coefficients in the sheaf of categories $\mathfrak{S}$ on $X$ is supposed to be equivalent to a Fukaya type category associated to $Y$. Recently, Kapranov and Schechtman \cite{Kapranov-Schechtman} have given a precise definition of the notion of a ``perverse sheaf of categories'', which they call a {\em perverse Schober}, in the case when $X$ is a Riemann surface.

There is a CY-manifold $Y$ of complex dimension 3 associated to a spectral cover $\Sigma$ of a Riemann surface $X$, namely, the conic bundle over the cotangent bundle $T^{\vee}_{X}$  whose discriminant locus is $\Sigma$. The natural morphism $\pi: Y \rightarrow X$ is a symplectic fibration, and therefore should define a perverse Schober $\mathfrak{S}$ on $X$. We formulate the following:

\begin{conjecture}
The Schober $\mathfrak{S}$ on $X$ associated with the conic bundle, together with a stability structure on it, can be recovered from a family $\{h_{\theta}\}_{\theta \in \mathbb{R}}$ of versal harmonic $e^{i \theta} \phi$-maps, where $\phi$ is the multivalued differential corresponding to the given spectral cover $\Sigma$. 
\end{conjecture}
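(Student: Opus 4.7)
\medskip

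\noindent\textbf{Sketch of approach.} The plan is to reconstruct $\mathfrak{S}$ and its stability data in two stages: first, extract the underlying combinatorial/categorical skeleton of the Schober from the individual pre-buildings $B^{\rm pre}_{\theta} := B^{\rm pre}_{e^{i\theta}\phi}$; second, use the full angular family $\{h_\theta\}$ to pin down the phase function of the stability structure. Throughout we would use the Kapranov--Schechtman description \cite{Kapranov-Schechtman} of a perverse Schober on a Riemann surface in terms of a spanning ribbon graph (essentially a spine dual to a triangulation by generic trajectories), with functors assigned to edges and vertices subject to coherences at critical points.

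For each fixed $\theta$, the pre-building $B^{\rm pre}_{\theta}$ together with the $\phi$-harmonic map $h_{\theta}$ provides a canonical CW decomposition of $X$ whose $1$-skeleton consists of caustic and post-caustic curves. The first step is to show that the dual spine furnishes the Kapranov--Schechtman ribbon graph for $\mathfrak{S}$, with local models at vertices determined by the singular constructions $E_x$ of Section \ref{sec-initial}: enclosures at generic $x$, unions of half-hexagons at branch points, and the hexagonal construction at caustic crossings. The half-hexagon local model, being locally the projection to a leaf space crossed with an interval, should correspond to a Lefschetz vanishing-cycle functor in the Fukaya category of the conic fiber $\pi^{-1}(x)$; the caustic-crossing hexagon should encode the commutation relation between two such vanishing cycles. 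Gluing these local Schobers along the open enclosures, using the presheaf formalism of Section \ref{sec-presheaf}, would reproduce $\mathfrak{S}$.

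To recover the stability structure one varies $\theta$ and tracks the one-parameter family of pre-buildings. By Conjecture \ref{isoemb} the Finsler distance between the images of points $P,Q \in X$ equals the WKB exponent $\nu^{WKB}_{\theta}(P,Q)$, and under integration of $e^{i\theta}\phi$ along a straight path realizing this distance it should match the modulus $|Z(E_{\gamma})|$ of the central charge of the Fukaya object $E_{\gamma}$ supported on $\gamma$. The critical value of $\theta$ at which the realizing straight path in $B^{\rm pre}_\theta$ aligns with a reflection hyperplane then supplies the phase $\arg Z(E_\gamma)$. BPS walls, where (by the analysis of Section \ref{sec-progress}) the reduction process degenerates and the pre-building mutates, correspond precisely to the codimension-one loci in stability space where two stable objects have aligned central charges; the combinatorial mutations of $\{B^{\rm pre}_{\theta}\}$ across such walls should match the Kontsevich--Soibelman wall-crossing transformations of the stability data.

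The main obstacle will be to bridge the gap between the essentially two-dimensional combinatorial data carried by pre-buildings and the higher $A_\infty$-structure required by a Fukaya-type category. The family $\{h_\theta\}$ records metric information (hence $|Z|$) and the location and type of BPS alignments (hence the phase walls), but extracting the $m_k$ operations for $k \geq 3$, together with the full coherence data of the Schober, ultimately requires counting holomorphic discs in the symplectic fibration $Y \to X$. The key technical point is to show that these disc counts are already forced by the way the pre-buildings mutate across BPS walls; in the indeterminate case of non-uniqueness of $h_\theta$ (compare Figure \ref{newtree} and Section \ref{sec-indeterminate}), one must show that the allowed moduli of mutations carries exactly the information needed to glue the Fukaya categories of adjacent chambers along the wall, and that no additional input from $Y$ beyond $\phi$ and $\{h_\theta\}$ is needed.
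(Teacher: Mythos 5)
This statement is one of the paper's labeled conjectures, and by the authors' stated convention such statements are open problems: the paper offers no proof of it whatsoever, only the surrounding motivational discussion of Kontsevich's program and the Kapranov--Schechtman notion of a perverse Schober. So there is no ``paper's proof'' to compare against, and the relevant question is whether your proposal actually closes the gap. It does not; it is a research outline, and you concede as much in your final paragraph.

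The concrete gaps are these. First, the central identification on which your first stage rests --- that the half-hexagon local model at a branch point ``should correspond to a Lefschetz vanishing-cycle functor'' in the Fukaya category of the conic fiber, and that the caustic-crossing hexagon ``should encode'' a commutation relation --- is asserted, not established. The pre-building $B^{\rm pre}_{\theta}$ carries only metric and combinatorial information (a two-manifold construction with a scaffolding and a folding map from $X$); producing from it the functors, adjunction data and higher coherences of a Schober requires an input of categorical structure that nothing in Sections \ref{sec-presheaf}--\ref{sec-progress} supplies. Your own closing paragraph identifies this: the $m_k$ for $k\geq 3$ come from holomorphic disc counts in $Y$, and the claim that ``these disc counts are already forced by the way the pre-buildings mutate'' is precisely a restatement of the conjecture, not an argument for it. Second, your second stage stacks conjecture upon conjecture: the existence and universality of $B^{\rm pre}_{\theta}$ for all $\theta$ off the walls depends on Conjectures \ref{mainconj} and \ref{finitenessconj}, the equality of Finsler distance with the WKB exponent depends on Conjecture \ref{isoemb}, and the further identification of that exponent with $|Z(E_\gamma)|$ for a Fukaya object is an additional unproved comparison (the paper itself only conjectures agreement with the Gaiotto--Moore--Neitzke exponents). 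A proof would have to either establish these inputs or show the recovery of $\mathfrak{S}$ is independent of them; your sketch does neither. As a roadmap your proposal is reasonable and consistent with the paper's intentions, but it should be presented as such, not as a proof.
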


\subsection{Higher ranks}

In this paper we have restricted to looking at buildings for $SL_3$. These two-dimensional
objects remain geometrically close to the points of the original Riemann surface, and this
has been used in fundamental ways in our discussion. So it remains a big open problem how to
go to a general theory for buildings of arbitrary rank, say for the groups $SL_r$. The image of
$X$ becomes a very thin subset. Maxim Kontsevich suggested on several occasions that we should think of trying to ``thicken'' $X$ to something of the right dimension.

We feel that this will indeed be the way that the
theory should play out. The initial construction will give a neighborhood of $X$ of the correct dimension
$r-1$.
The $r-2$-dimensional
boundary of this neighborhood looks something like a bundle of $r-4$-spheres over the $2$-dimensional
$X$. Probably, the analogue of our process described above will be some kind of discrete evolution
of the boundary as we successively enlarge the region of the building that is being constructed.
The two-manifold constructions which show up in our process for $SL_3$ should be thought of as the boundaries
of what are, for us, ``$0$-dimen\-sional disk bundles over $X$''.

\subsection{Buildings as enriched categories}
In \cite{Lawvere}, Lawvere made the fundamental observation that metric spaces can be viewed as categories enriched over the poset $\left( [0,\infty], \geq \right)$ equipped with the (symmetric) monoidal structure given by addition of real numbers. Furthermore, various metric constructions, such as Cauchy completion, have natural categorical interpretations. Motivated by these observations, Flagg \cite{Flagg} has shown that much of the basic theory of metric spaces carries over to categories enriched over an arbitrary value distributive quantale $\mathcal{Q}$. The distance between two points in these ``generalized metric spaces'' is an object of the quantale $\mathcal{Q}$ instead of a real number.

Tits' definition of a (discrete) building as a chamber system with a Weyl group valued ``distance function'' (see, e.g.\ \cite{AbramenkoBrown}) strongly suggests that the theory of buildings should naturally be situated within the theory of $\mathcal{Q}$-enriched categories for some suitable quantale $\mathcal{Q}$ that depends on the Weyl group. This idea has been put forth by Dolan and Trimble, and is discussed in \cite{Trimble}. 

In a forthcoming work, we hope to systematically develop a theory of $\mathbb{R}$-buildings as categories enriched over a suitable variant of the category of enclosures considered in this paper. We feel that, combined with the theory of buildings as sheaves on the site of enclosures discussed earlier, such a theory will provide a powerful framework in which to formulate and study the types of constructions we have looked at in this paper in connection with the problem of constructing versal buildings.

\end{document}